\newtheorem{theorem}{Theorem}
\newtheorem{definition}{Definition}
\newtheorem{proposition}{Proposition}
\newtheorem{lemma}{Lemma}
\newtheorem{exam}{Example}
\newtheorem{exams}{Examples}
\newtheorem{rmk}{Remark}
\newenvironment{remark}{\begin{rmk}\rm}{\end{rmk}}
\newtheorem{notat}{Notation}
\newenvironment{notation}{\begin{notat}\rm}{\end{notat}}
\renewcommand{\ne}{\not =}
\title{Local Brunella's Alternative I.  RICH Foliations}
\author{F. Cano, M. Ravara-Vago \& M. Soares}
\address{Felipe Cano,
Dep. Algebra, An\'alisis Matem\'atico y Geometr\'ia y Topolog\'ia - UVa -
Valladolid, Espa\~na.}
\email{fcano@agt.uva.es}
\address{Marianna Ravara-Vago,
Dep. Matem\'{a}tica
ICEx - UFMG, Campus Pampulha
31270-901 Belo Horizonte - Brasil.
}\email{ravaravago@gmail.com}
\address{M\'arcio Soares,
Dep. Matem\'{a}tica
ICEx - UFMG, Campus Pampulha
31270-901 Belo Horizonte - Brasil.
}\email{msoares@mat.ufmg.br}
\subjclass{32S45, 32S65. Partially supported by MIC (Spain), CNPq, CAPES and FAPEMIG, (Brasil)}
\date{September 2013}
\begin{document}

\maketitle

\begin{flushright}
\em To Marco Brunella, in memoriam.
\end{flushright}

\tableofcontents

\begin{abstract}
This paper is devoted to studying the structure of codimension one singular holomorphic foliations on $({\mathbb C}^3,0)$ without invariant germs of analytic surface. We focus on the so-called CH-foliations, that is, foliations without saddle nodes in two dimensional sections. Considering a reduction of singularities, we detect the possible existence of ``nodal components'', which are a higher dimensional version of the nodal separators in dimension two. If the foliation is without nodal components, we prove that all the leaves in a neighborhood of the origin contain at least one germ of analytic curve at the origin. We also study the structure of nodal components for the case of ``Relatively Isolated CH-foliations'' and we show that they cut the dicritical components or they exit the origin through a non compact invariant curve. This allows us to give a precise statement of a local version of Brunella's alternative: if we do not have an invariant surface, all the leaves contain a germ of analytic curve or it is possible to detect the nodal components in the generic points of the singular curves before doing the reduction of singularities.
\end{abstract}

\section{Introduction}

It is a question of M. Brunella to decide if the following alternative is true:
\begin{quote}\em
Let $\mathcal F$ be a singular holomorphic foliation of codimension one in the projective space ${\mathbb P}^3_{\mathbb C}$. If there is no projective algebraic surface invariant by $\mathcal F$, then each leaf of $\mathcal{F}$ is a union of algebraic curves.
\end{quote}

The answer to this question is known \cite{Cer} to be positive in the case of generic foliations in a pencil of foliations.  For degree $\mathsf{d} =0,1$ and $2$ all the irreducible components of the space of foliations $\mathcal{F}(3, \mathsf{d})$ are known but, for $\mathsf{d} \geq 3$, although several irreducible components have been recognized, it is not known if this list is exhaustive. What is known is that some irreducible components of  $\mathcal{F}(3, \mathsf{d})$ admit such pencils, hence the positive answer to the alternative in these cases.

This paper is the first one concerning a local version of the above alternative for complex hyperbolic foliations on $({\mathbb C}^3,0)$. As we state in Definition \ref{def:CHfoliaiton}, a  germ $\mathcal F$ of singular holomorphic foliation of codimension one in $({\mathbb C}^n,0)$ is a {\em complex hyperbolic foliation} (for short,  a {\em CH-Foliation}) if for every holomorphic map germ
$
\phi:({\mathbb C}^2,0)\rightarrow ({\mathbb C}^n,0)
$
generically transversal to $\mathcal F$, the transformed foliation $\phi^*{\mathcal F}$ is a generalized curve on $({\mathbb C}^2,0)$ in the sense of \cite{Cam-N-S}; that is, there are no saddle nodes in its reduction of singularities. As there are dicritical CH-foliations without invariant surfaces, this phenomenon warns against the use of the terminology ``generalized surface'' for  dicritical situations. In contrast with this, the authors in \cite{Moz-F} use the term ``generalized surface'' in the non-dicritical case, since the reduction of singularities of the set of invariant surfaces provides a reduction of singularities for the foliation.

The first result we prove in this paper is the following one
\begin{theorem}
\label{teo:mainI}
 Let $\mathcal F$ be a CH-foliation on $({\mathbb C}^3,0)$ without germ of invariant surface. Assume that there is a reduction of singularities of $\mathcal F$ without nodal components. There is a neighborhood $U$ of the origin $0\in {\mathbb C}^3$ such that, for each leaf $L\subset U$ of $\mathcal F$ in $U$ there is a germ of analytic curve $\gamma$ at the origin such that $\gamma\subset L\cup\{0\}$.
\end{theorem}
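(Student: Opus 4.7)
The plan is to set up and exploit a reduction of singularities $\pi:M\to(\mathbb{C}^3,0)$ without nodal components, decompose the exceptional divisor $E=\pi^{-1}(0)$ into dicritical components $D_1,\dots,D_r$ (generically transverse to $\tilde{\mathcal F}=\pi^{*}\mathcal F$) and invariant components, and then argue that every leaf close to $0$ must, after lifting, accumulate on some dicritical $D_i$ at a point lying over the origin. From such an accumulation point, the standard local product structure transverse to the dicritical provides the desired analytic curve through $0$.

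First I would fix a neighborhood $U$ of the origin small enough that $\pi^{-1}(U)$ deformation-retracts onto a neighborhood of $E$ on which $\tilde{\mathcal F}$ is in reduced form, and I would pass to the normalization of the divisor at its singular locus to have local models controlled by the CH-hypothesis at each regular point of each component and by known 2-dimensional corner models (without saddle-nodes and without nodal corners) at the crossings. Given a leaf $L\subset U\setminus\{0\}$, its lift $\tilde L$ is a leaf of $\tilde{\mathcal F}$ in $\pi^{-1}(U)\setminus E$; by properness of $\pi$ on compact subsets of $M$ and the fact that $L$ approaches $0$ inside $U$, the adherence $\overline{\tilde L}$ must meet $E$.

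The central step is to show that $\overline{\tilde L}$ cannot be entirely contained in the union $E^{\mathrm{inv}}$ of invariant components of $E$. Assuming the contrary, I would pick a generic point of some invariant component $E_j$ in $\overline{\tilde L}\cap E^{\mathrm{inv}}$ and examine the holonomy transverse to $E_j$: by the CH assumption and the absence of nodal components, every two-dimensional transverse section supports a generalized-curve foliation whose nonzero Camacho--Sad-type invariants prevent trapping of an accumulating regular leaf without producing either a saddle-node, a node or an invariant separatrix that extends, through the reduction, to a germ of invariant surface at $0$. Propagating this along the connected configuration $E^{\mathrm{inv}}$, using corner models and the combinatorics of the divisor, would eventually yield an $\tilde{\mathcal F}$-invariant analytic hypersurface containing $E^{\mathrm{inv}}$, whose direct image under $\pi$ contradicts the no-invariant-surface hypothesis. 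Hence $\overline{\tilde L}$ meets some dicritical component $D_i$ at a point $p\in\pi^{-1}(0)$.

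To finish, I would choose $p$ to be a regular point of $D_i$ where $\tilde{\mathcal F}$ is locally the product foliation with leaves transverse to $D_i$; then the local leaf through $p$ is a holomorphic disc $\Delta_p$ meeting $D_i$ transversely at $p$, and $\Delta_p\setminus\{p\}\subset\tilde L$ because $p$ belongs to the closure of $\tilde L$ and the local leaf space is Hausdorff. The germ $\gamma=\pi(\Delta_p)$ is then a germ of analytic curve at $0$ with $\gamma\setminus\{0\}\subset L$, which proves the theorem. The main obstacle I expect is the central step: precisely ruling out confinement of $\overline{\tilde L}$ to the invariant part of $E$, since this requires combining, in a global way on the divisor, the local CH-data, the non-existence of nodal corners, and the no-invariant-surface hypothesis, and is where the bulk of the paper's machinery on the structure of non-nodal reductions should be brought to bear.
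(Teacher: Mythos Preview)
Your overall architecture --- lift to $M$, show every nearby leaf meets a compact dicritical component of $E$, and extract the curve there --- matches the paper, and your final paragraph is essentially the paper's conclusion. The gap is exactly where you flag it: the central step.

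The contradiction you sketch does not work as stated. You propose that if $\overline{\tilde L}\subset E^{\mathrm{inv}}$ then holonomy and Camacho--Sad data along invariant components would eventually produce an $\tilde{\mathcal F}$-invariant hypersurface whose image under $\pi$ is an invariant surface at $0$. But the invariant components of $E$ are already invariant hypersurfaces in $M$; what prevents them, or the partial separatrices attached along their trace locus, from descending to $(\mathbb C^3,0)$ is precisely that those partial separatrices meet compact dicritical components (this is Proposition~\ref{pro:invariantfiber}). There is no mechanism by which ``one particular leaf accumulates only on $E^{\mathrm{inv}}$'' manufactures a new closed invariant surface or forces some partial separatrix to avoid the dicritical part; a leaf can perfectly well accumulate on an invariant component whose associated partial separatrix runs off into a dicritical component elsewhere.

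The paper runs the argument in the opposite direction, by forward saturation from the dicritical components rather than by contradiction from a single leaf. One first shows a compact dicritical component exists (Proposition~\ref{pro:invariantfiber}). The absence of nodal components is then used combinatorially: any two regular exceptional components (invariant components of $E$, partial separatrices, compact dicritical components) are joined by chains whose successive intersections contain a not-generically-nodal curve (Lemma~\ref{lema: connexionnodallibre}). The analytic engine you are missing is Proposition~\ref{prop:sillas}: at a simple CH-point of saddle type the saturation of any small transversal disc, together with the local divisor, is a full neighborhood of the point. Iterating this along the nodally-free chains, and using Remark~\ref{rk:transicionnodal} to cross the broken generically-nodal pieces, yields Proposition~\ref{prop:nonodalcomponents}: the set $H$ of leaves that actually \emph{intersect} some compact dicritical component, together with $\tilde E$, is a neighborhood of $\pi^{-1}(0)$. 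Every leaf in a suitable saturated $U$ therefore genuinely meets a compact dicritical component, which also dissolves the closure-versus-intersection issue in your last paragraph.
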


We know by  \cite{Can} that there is a reduction of singularities for any codimension one foliation $\mathcal F$ on $({\mathbb C}^3,0)$. That is, there is a morphism
$$
\pi:(M,\pi^{-1}(0))\rightarrow ({\mathbb C}^3,0)
$$
which is a composition of blow-ups with invariant centers that produces a normal crossings exceptional divisor $E\subset M$, in such a way that all the points $p\in \pi^{-1}(0)$ are simple points for the pair $\pi^*{\mathcal F}, E$. The simple points for CH-foliations are of a special type that we call {\em simple CH-points} (in dimension two this corresponds exactly to avoiding saddle nodes in the reduction of singularities as in \cite{Cam-N-S}).  A natural generalization of the ``nodal separators'' of Mattei and Mar\'{\i}n \cite{Mar-M} is given by our definition of {\em nodal point} for a codimension one foliation in any dimension; these are the points where the foliation is locally given, in local coordinates $x_1,x_2,\ldots, x_n$, by $\omega=0$ where
$$
\omega=\sum_{i=1}^\tau \lambda_i\frac{dx_i}{x_i};\quad \lambda_i\in {\mathbb C^*},\; i=1,2,\ldots,\tau
$$
with $\lambda_i/\lambda_j\in {\mathbb R}$, for any $i,j$, and $\lambda_s/\lambda_j\in {\mathbb R}_{<0}$ for at least two indices
$s,j$. It is known that the singular locus $\mathrm{Sing}\,{\pi^*{\mathcal F}}$ is a union of nonsingular curves.  One such curve is {\em generically nodal} provided its generic point is a nodal point.  A {\em nodal component} $\mathcal N$ of the pair $\pi^*{\mathcal F}, E$ \/ is a connected component of the union of generically nodal curves
such that all the points in $\mathcal N$ are nodal points (and not only the generic points of the curves).

A key remark for the understanding of germs of foliations without an invariant germ of surface is that they must be {\em dicritical}. In a general way, we say that $\mathcal F$ is dicritical if there is a holomorphic map germ
$$
\phi:({\mathbb C}^2,0)\rightarrow ({\mathbb C}^3,0);\quad (x,y)\mapsto \phi(x,y)=(\phi_1(x,y),\phi_2(x,y),\phi_3(x,y))
$$
such that $\phi(\{y=0\})$ is invariant by $\mathcal F$ and the pullback $\phi^*{\mathcal F}$  coincides with the foliation $dx=0$ in $({\mathbb C}^2,0)$. In the paper \cite{Can-C} it is proved that any non dicritical foliation in $({\mathbb C}^3,0)$ has an invariant germ of analytic hypersurface, this is also true in any ambient dimension \cite{Can-M}. In fact, the arguments of \cite{Can-C} may be extended to the case where all compact components of the exceptional divisor are invariant; note that an irreducible component $D$ of  $E$ is compact if and only if $D\subset \pi^{-1}(0)$. Thus, if  $\mathcal F$ is without invariant surfaces, there is at least one compact component $D$ of $E$ that is generically transversal (dicritical component).

The main idea for Theorem \ref{teo:mainI} is that all the leaves of $\pi^*{\mathcal F}$ must intersect the union of compact dicritical components. At the intersection points we detect a germ of analytic curve contained in the leaf, which projects over the desired germ of analytic curve in $({\mathbb C}^3,0)$. The obstruction to having this property is the possible existence of nodal components, that could ``attract the leaves''.

The second result in this paper concerns the structure of the nodal components for a particular type of foliations that we call RICH-foliations. The idea is that we will be able to detect the possible existence of a nodal component $\mathcal N$ before doing the reduction of singularities, in the sense that $\mathcal N$ should project onto at least one of the curves $\Gamma\subset ({\mathbb C}^3,0)$ of the singular locus and the transversally generic behavior of $\Gamma$ is either dicritical or has a nodal separator in the sense of Mattei-Mar\'{\i}n.  To be precise, we prove the following result

\begin{theorem}\label{teo:mainII}
 Let $\mathcal F$ be a RICH-foliation in $({\mathbb C}^3,0)$. Assume that there is no germ of invariant analytic surface for $\mathcal F$. Then one of the two properties holds
\begin{itemize} \item[(i)] There is a neighborhood $U$ of the origin $0\in {\mathbb C}^3$ such that, for each leaf $L\subset U$ of $\mathcal F$ in $U$ there is an analytic curve $\gamma\subset L$ with $0\in \gamma$. \item[(ii)] There is an analytic curve $\Gamma$ contained in the singular locus $\mathrm{Sing}\,{\mathcal F}$ such that, $\mathcal F$ is generically dicritical or it has a nodal separator along $\Gamma$. \end{itemize} \end{theorem}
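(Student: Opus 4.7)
The plan is to combine Theorem \ref{teo:mainI} with a structural analysis of nodal components in the RICH setting. First I would fix a reduction of singularities $\pi : (M, \pi^{-1}(0)) \rightarrow (\mathbb{C}^3, 0)$ of $\mathcal F$, producing a normal crossings exceptional divisor $E \subset M$ all of whose points are simple CH-points for the pair $\pi^*\mathcal F, E$. If this reduction has no nodal components, then Theorem \ref{teo:mainI} immediately yields conclusion (i). Thus the only substantial case is that in which there exists at least one nodal component $\mathcal N$ of $\pi^*\mathcal F, E$, and the whole argument reduces to extracting from $\mathcal N$ the curve $\Gamma \subset \mathrm{Sing}\,\mathcal F$ predicted by (ii).

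Next I would exploit the absence of an invariant germ of analytic surface. As recalled in the introduction, together with the extension of \cite{Can-C} to the situation in which all compact components of $E$ are invariant, this forces at least one irreducible component $D \subset E$ with $D \subset \pi^{-1}(0)$ to be dicritical. The RICH hypothesis should then be used to guarantee that $\mathrm{Sing}\,\mathcal F$ is a finite union of irreducible analytic curves $\Gamma_1, \ldots, \Gamma_r$ through the origin, and that the combinatorics of $\pi$ above each $\Gamma_j$ is sufficiently controlled to track the persistence of real resonances $\lambda_i/\lambda_j \in \mathbb{R}_{<0}$ appearing in the nodal normal form under the chain of blow-downs.

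The heart of the argument is a dichotomy for $\mathcal N$. If $\mathcal N$ meets a compact dicritical component $D$, then propagating the nodal normal form $\omega = \sum_i \lambda_i\, dx_i/x_i$ along the chain of nodal curves in $\mathcal N$ up to $D$, and then projecting by $\pi$, shows that the transversally generic behavior of $\mathcal F$ along $\Gamma = \pi(\mathcal N)$ is dicritical. If instead $\mathcal N$ avoids every dicritical component, then $\mathcal N$ cannot be a closed compact union of invariant curves contained in $\pi^{-1}(0)$: such a configuration, combined with the simple-point local models at every node, would produce an invariant germ of analytic surface downstairs, contradicting the hypothesis. Hence $\mathcal N$ must escape $\pi^{-1}(0)$ through a non-compact invariant curve of $\pi^*\mathcal F$, whose image under $\pi$ is a curve $\Gamma \subset \mathrm{Sing}\,\mathcal F$; along a generic transverse two-plane section to $\Gamma$, the real ratios of the nodal normal form survive the blow-downs and yield a nodal separator in the sense of Mattei--Mar\'{\i}n \cite{Mar-M}.

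The main obstacle I foresee is the second branch of this dichotomy. One must rigorously show that a nodal component disjoint from the dicritical part of $E$ cannot close up compactly inside $\pi^{-1}(0)$; concretely, one has to convert the hypothetical absence of an invariant surface into a topological-combinatorial constraint on the union of invariant components meeting $\mathcal N$, using the RICH hypothesis to exclude pathological intersection patterns. Once the escape along a non-compact invariant curve is secured, a second delicate step is to verify that the nodal normal form indeed descends to a nodal separator on a transverse disk to $\Gamma$: this requires tracking both the local holonomy and the real-resonance relations through the full sequence of blow-downs, and once again the RICH hypothesis is what prevents exotic simple points from appearing along the way and destroying the descent.
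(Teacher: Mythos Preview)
Your overall architecture --- apply Theorem~\ref{teo:mainI} when there are no nodal components, and otherwise analyse a nodal component $\mathcal N$ --- is the same as the paper's. However, both branches of your dichotomy for $\mathcal N$ contain genuine gaps.

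\textbf{First branch.} You claim that if $\mathcal N$ meets a compact dicritical component $D$, then $\Gamma=\pi(\mathcal N)$ is a curve along which $\mathcal F$ is generically dicritical, giving (ii). But if $\mathcal N$ is compact, it is contained in $\pi^{-1}(0)$ and its image $\pi(\mathcal N)$ is the single point $\{0\}$, not a curve; there is no $\Gamma\subset\mathrm{Sing}\,\mathcal F$ to extract. The paper handles this case in the opposite way: when every nodal component meets a compact dicritical component, the saturation argument behind Theorem~\ref{teo:mainI} still goes through (leaves reaching $D$ already contain germs of analytic curves, and via Remark~\ref{rk:transicionnodal} one propagates this along $\mathcal N$), so one concludes (i), not (ii). Only nodal components that are \emph{non-compact} or that meet a \emph{non-compact} dicritical component produce a curve $\Gamma$ downstairs and hence conclusion (ii).

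\textbf{Second branch.} You argue that a compact $\mathcal N$ disjoint from $E_{\mathrm{dic}}$ would yield an invariant germ of surface, contradicting the hypothesis. This mechanism does not work. Every irreducible curve of a nodal component is generically a simple CH-\emph{corner}: locally the intersection $E_i^N\cap E_j^N$ of two invariant components of $E$. At such points the only invariant hypersurfaces are the coordinate hyperplanes, all already in $E$; no partial separatrix attaches to $\mathcal N$, so the Cano--Cerveau construction of Section~\ref{sec:compactdicriticalcomponents} produces nothing new from $\mathcal N$ itself. The statement you need here is exactly Theorem~\ref{teo:mainIII}, and the paper's proof of it (Sections~\ref{sec:singlocus}--\ref{sec:nodalcomponents} and the Conclusion) is not an invariant-surface argument at all: it is an induction on the height $h(p)$ in the RI-tower, tracking the projections $\mathcal N_k=\rho_k(\mathcal N)$ and proving structural lemmas (non-dicriticalness and nodality at corners and trace points, incompatibility of trace curves) that ultimately force a contradiction at the date of birth $b(\mathcal N)$ via a Camacho--Sad index count on a generic two-dimensional section. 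This is precisely where the RI hypothesis is used in an essential way, and your proposal does not contain a substitute for it.
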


Let us explain the concepts appearing in Theorem \ref{teo:mainII}.
The term {\em RICH-foliation} stands for {\em Relatively Isolated Complex Hyperbolic Foliation}.
A  germ $\mathcal F$ of singular holomorphic foliation of codimension one in $({\mathbb C}^3,0)$ is a {\em RICH-foliation} if it is a CH-foliation and, furthermore, there is a reduction of singularities for $\mathcal F$
\begin{equation*}
\label{eq:redsing}
\pi: (M,\pi^{-1}(0))\rightarrow({\mathbb C}^3,0)
\end{equation*}
 where $\pi$ is a composition of blow-ups $\pi=\pi_1\circ\pi_2\circ\cdots\pi_N$
such that for any index $0\leq k\leq N-1$ the blow-up $\pi_{k+1}:M_{k+1}\rightarrow M_{k}$ satisfies
\begin{itemize}
\item The center $Y_{k}\subset M_{k}$ of $\pi_{k+1}$ is non singular, has normal crossings with the total exceptional divisor $E^{k}\subset M_{k}$ and is contained in the adapted singular locus $\mbox{Sing}({\mathcal F}_{k},E^{k})$, where ${\mathcal F}_{k}$ is the transform of  $\mathcal F$.
\item The intersection $Y_{k}\cap (\pi_1\circ\pi_2\circ \cdots \circ \pi_{k})^{-1}(0)$ is a single point.
\end{itemize}
(The {\em adapted singular locus}  $\mbox{Sing}({\mathcal F}_{k},E^{k})$ is the locus of points where ${\mathcal F}_k$ is singular or it does not have normal crossings with $E^k$, in particular $Y_{k}$ is invariant by  ${\mathcal F}_{k}$. For more details, see \cite{Can-C,Can}). This kind of reduction of singularities will be called a {\em RI-reduction of singularities of $\mathcal F$}.

The condition ``relatively isolated" is less restrictive than ``absolutely isolated''. It contains as examples the case of equireduction along a curve and the  foliations of the type $df=0$ where $f=0$ defines a germ of surface with absolutely isolated singularity. There are also examples without invariant surface, for instance the classical conic foliation given by Jouanolou \cite{Jou}.
Absolutely isolated singularities of vector fields have been studied in \cite{Cam-C-S}, whereas for codimension one foliations on $({\mathbb C}^3,0)$ the singular locus has codimension two unless we have a holomorphic first integral \cite{Mal}. Also, in the paper \cite{Can-C-S} the authors consider foliations desingularized essentially by punctual blow-ups, which is a condition stronger than being relatively isolated.

Following \cite{Mar-M}, we say that a germ of foliation ${\mathcal G}$ on $({\mathbb C}^2,0)$ contains a {\em nodal separator} if, in the reduction of singularities, there is a singularity analytically equivalent to $xdy-\lambda ydx=0$ were $\lambda$ is a non rational positive real number.
Now, take a germ of curve $\Gamma$ contained in the singular locus of a foliation $\mathcal F$ in $({\mathbb C}^3,0)$.
We say that $\mathcal F$ is {\em generically dicritical along $\Gamma$} if it is dicritical at a generic point of $\Gamma$.  We can  verify this fact at the equireduction points of $\Gamma$ (see \cite{Can}). If $\mathcal F$ is not generically dicritical along $\Gamma$, it is known \cite{Can} that the equireduction along $\Gamma$ is given by the (non-dicritical) reduction of singularities of the restriction $\mathcal G$ of $\mathcal F$ to a plane section transversal to $\Gamma$ at a generic point. In this case, we say that $\mathcal F$ {\em has a nodal separator along $\Gamma$} if this is true for such plane transversal sections $\mathcal G$.

Finally, the condition (ii) of Theorem \ref{teo:mainII} is equivalent to the fact that any nodal component intersects the dicritical components or it contains a non compact curve. To be precise, Theorem \ref{teo:mainII} is a consequence of Theorem \ref{teo:mainI} and the following result of structure for the nodal components

\begin{theorem}
\label{teo:mainIII}
 Let $\mathcal F$ be a RICH-foliation in $({\mathbb C}^3,0)$ and let $
 \pi: (M,\pi^{-1}(0))\rightarrow({\mathbb C}^3,0)
 $ be an RI-reduction of singularities
 for $\mathcal F$ with total exceptional divisor $E\subset M$. Any  compact nodal component $\mathcal N$ of $\pi^*{\mathcal F}, E$ intersects the union of the dicritical components of $E$.
\end{theorem}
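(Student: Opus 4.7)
The plan is to argue by contradiction: assume there is a compact nodal component $\mathcal N$ of $\pi^*\mathcal F, E$ disjoint from every dicritical component of $E$. At each $p\in\mathcal N$ the foliation admits the local nodal normal form $\omega=\sum_{i=1}^\tau\lambda_i\,dx_i/x_i$, and the generically nodal curve of $\mathcal N$ through $p$ is the intersection of two components of $E$; by the avoidance hypothesis both of them must be invariant. Hence $\mathcal N$ lives in the skeleton of the union of invariant components of $E$, and at the triple points of $E$ on $\mathcal N$ the three concurring components are all invariant. The configuration supporting $\mathcal N$ is thus purely non-dicritical.

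I would induct on the length $N$ of the RI-reduction $\pi=\pi_1\circ\cdots\circ\pi_N$. The base case ($N$ small) is handled directly: a single exceptional component, whether a $\mathbb P^2$ coming from a point blow-up or a $\mathbb P^1$-bundle coming from a curve blow-up, does not on its own provide two invariant components whose intersection could support a compact nodal curve not touching any dicritical component. For the inductive step, distinguish whether $\mathcal N$ meets the last-introduced exceptional component $D_N$. If $\mathcal N\cap D_N=\emptyset$, then the isomorphism induced by $\pi_N$ off the center $Y_{N-1}$ carries $\mathcal N$ to a compact nodal component of the pair $(\pi_1\circ\cdots\circ\pi_{N-1})^*\mathcal F, E^{N-1}$ in $M_{N-1}$, still avoiding every dicritical component, contradicting the inductive hypothesis applied to the shorter RI-reduction.

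The substantive case is $\mathcal N\cap D_N\neq\emptyset$. The RICH single-point condition restricts the compact locus of $D_N$ that can meet $\mathcal N$: either the whole $D_N$ when $Y_{N-1}$ is a point, or the fiber over the distinguished point $Y_{N-1}\cap(\pi_1\circ\cdots\circ\pi_{N-1})^{-1}(0)$ when $Y_{N-1}$ is a curve. I would then combine the nodal normal form, the sign condition $\lambda_s/\lambda_j\in{\mathbb R}_{<0}$, and residue (Camacho--Sad-type) indices computed along each curve of $\mathcal N\cap D_N$, viewed both inside $D_N$ and inside the adjacent invariant component. The signs of these indices are pinned down by the negativity of the nodal ratios, while their sum is constrained by the self-intersection geometry of the relevant curves in $D_N$ and by incidence with neighboring invariant components; these two requirements should turn out to be incompatible, forcing some incident component to be dicritical.

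The main obstacle I anticipate is the bookkeeping at the triple points of $E$ on $\mathcal N$, where three local singular branches concur but only some extend as generically nodal curves of $\mathcal N$. One has to show that the nodal condition propagates coherently across such corners, so that the eigenvalue data at neighbouring points match up consistently, and that the resulting combinatorial tree of invariant components meeting $\mathcal N$, together with the real sign constraints from the nodal ratios, cannot close up compactly within the non-dicritical part of $E$. Unifying the point-center and curve-center cases, and in particular ensuring that in the curve-center case the nodal structure does not escape along the non-compact direction of $D_N$ rather than truly meeting the compact fiber, is where the essential technical work will lie.
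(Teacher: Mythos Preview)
Your inductive scheme on the length $N$ of the RI-reduction has a structural gap. In the case $\mathcal N\cap D_N=\emptyset$ you push $\mathcal N$ down to $M_{N-1}$ and invoke the inductive hypothesis ``applied to the shorter RI-reduction''. But $\pi_1\circ\cdots\circ\pi_{N-1}$ is \emph{not} a reduction of singularities of $\mathcal F$: the center $Y_{N-1}\subset\mbox{Sing}(\mathcal F_{N-1},E^{N-1})$ contains points that are not yet CH-simple, so the hypothesis of the theorem is not satisfied at stage $N-1$ and there is nothing to apply. The very notion of ``nodal component'' in the paper is only defined once all points are CH-simple; at intermediate stages one has to work with the projections $\mathcal N_k=\rho_k(\mathcal N)$ as raw analytic sets and prove from scratch the properties you would like them to inherit. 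Your base case is also not as immediate as you suggest: with a single invariant exceptional $\mathbb P^2$ there can perfectly well be compact s-trace curves that are generically nodal; ruling this out already requires a Camacho--Sad argument together with an incompatibility statement for trace curves.

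The paper's route is quite different. Instead of shortening the reduction, it keeps the full sequence fixed and studies the images $\mathcal N_k$ at every intermediate stage $k$, proving by induction on the \emph{height} $h(p)$ of individual points three structural facts: (i) at triple corners of $E^k$ on $\mathcal N_k$ all three components are invariant and exactly one of the two transverse corner curves lies in $\mathcal N_k$; (ii) at s-trace points on $\mathcal N_k$ every component of $E^k$ is invariant and nodal/non-nodal interruptions propagate in a prescribed dichotomy; (iii) two s-trace curves through a common point in a common component of $E^k$ are either both in $\mathcal N_k$ or both out. Only after this is the contradiction produced, not at stage $N$ but at the \emph{date of birth} $b=b(\mathcal N)$, i.e.\ the first $k$ for which $\mathcal N_k$ is a curve rather than a point: there a generic two-dimensional Mattei--Moussu section of $E^b_b$ combined with the Camacho--Sad index relation forces a non-nodal s-trace curve in $E^b_b$, and the incompatibility result (iii) together with (i)--(ii) then yields the contradiction. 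Your Camacho--Sad intuition for the ``substantive case'' is on target, but the argument cannot be localised to the last blow-up; the essential work is precisely the propagation lemmas (i)--(iii) across all intermediate stages, which replace the failed induction on $N$.
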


It is an open question if the analogous of Theorem \ref{teo:mainIII} is true for CH-foliations.

In some sense the global alternative of Brunella may be interpreted as a property concerning the ``concentration-diffusion'' of the non-transcendency of the leaves of a foliation: either we concentrate the non-transcendency  in an algebraic leaf, or all the leaves are not completely transcendent in the sense that they are foliated by algebraic curves. In our local situation we have an analogous of this phenomenon based on the concept of ``end of a leaf''. In a forthcoming paper we will study the ends of the leaves for CH-foliations without invariant surface. All these ends will be ``semi-transcendental'' in the sense that, either they contain an analytic curve, or they are of a ``valuative type'' that admits bifurcation at all the accumulation points after blow-up. Moreover, the leaves in a neighborhood will have at least one end and in this sense we can reformulate a local version of Brunella's alternative by saying that, either we have an invariant germ of surface, or there is a neighborhood of the origin such that all the leaves are ``semi-transcendental''.\\

\noindent{\small {\em Acknowledgements:} We are very grateful to D. Cerveau and J.F. Mattei for the stimulating discussions we have had.}

\section{Nodal and Saddle Simple Complex Hyperbolic Points}

 We introduce here the  {\em simple complex hyperbolic points}, which are the higher-dimensional version of the simple singularities in the sense of Seidenberg (see  \cite{Can-C-D, Sei}) given by vector fields with two non-null eigenvalues.

Let $\mathcal F$ be a germ of singular holomorphic foliation of codimension one on $({\mathbb C}^n,0)$. We say that $\mathcal F$ has {\em dimensional type $\tau$ at the origin} if  there is a submersion
$$
\phi:({\mathbb C}^n,0)\rightarrow ({\mathbb C}^{\tau},0)
$$
and a codimension one foliation ${\mathcal G}$ on $({\mathbb C}^{\tau},0)$ such that ${\mathcal F}=\phi^*{\mathcal G}$ and moreover there is no such submersion $({\mathbb C}^n,0)\rightarrow ({\mathbb C}^{\tau-1},0)$.  In other words, there are local coordinates $x_1,x_2,\ldots,x_n$ at the origin $0\in {\mathbb C}^n$ and an integrable 1-form $\omega$ such that $\mathcal F$ is given by $\omega=0$, where $\omega$ can be written down as follows
$$
\omega=\sum_{i=1}^{\tau}a_i(x_1,x_2,\ldots,x_{\tau})dx_i
$$
and $\tau$ is the minimum integer with this property. We have that $\tau=1$ if and only if $\mathcal F$ is non-singular. Note also that
if there are $k$ germs of non-singular vector fields $\xi_1,\xi_2,\ldots,\xi_k$ tangent to $\mathcal F$ such that $\xi_1(0),\xi_2(0),\ldots,\xi_k(0)$ are $\mathbb C$-linearly independent tangent vectors, then $\tau\leq n-k$ and conversely.

\begin{definition}[\cite{Can,Can-C}]
 \label{def:simple}
 Let $\mathcal F$ be a germ of codimension one singular holomorphic foliation on $({\mathbb C}^n,0)$ of dimensional type $\tau$.
We say that $\mathcal F$ has a {\em simple complex hyperbolic point at the origin} if and only there are local coordinates $x_1,x_2,\ldots,x_n$ and a meromorphic integrable 1-form $\omega$ such that $\mathcal F$ is given by $\omega=0$ and $\omega$ can be written down as follows
\begin{equation*}
\label{eq:simple}
\omega=\sum_{i=1}^\tau (\lambda_i+b_i(x_1,x_2,\ldots,x_\tau))\frac{dx_i}{x_i},\; b_i\in {\mathbb C}\{x_1,x_2,\ldots,x_\tau\},\; b_i(0)=0
\end{equation*}
where
the
{\em residual vector  $\underline\lambda=(\lambda_i)_{i=1}^\tau\in {\mathbb C}^\tau$} satisfies the non-resonance property:\begin{quote}`` For any ${\mathbf m}=(m_i)_{i=1}^\tau\in {\mathbb Z}^\tau_{\geq 0}$ we have
$
\sum_{i=1}^\tau m_i\lambda_i\ne 0
$ if ${\mathbf m}\ne 0$''.
\end{quote}
\end{definition}
Let $E\subset ({\mathbb C}^n,0)$ be a normal crossings divisor. We decompose $E=E_{\mbox{\small inv}}\cup E_{\mbox{\small dic}}$, where $E_{\mbox{\small inv}}$ is the union of the irreducible components of $E$ invariant by $\mathcal F$ and $E_{\mbox{\small dic}}$ is the union of those that are generically transversal to $\mathcal F$ (dicritical components).
The origin is a {\em  simple complex hyperbolic point for ${\mathcal F}$ adapted to $E$} \/ if it is a simple complex hyperbolic point for $\mathcal F$ and  the coordinates in Definition \ref{def:simple} may be chosen in such a way that
$$(\prod_{i=1}^{\tau-1} x_i=0)\subset E_{\mbox{\small inv}}\subset(\prod_{i=1}^\tau x_i=0);\quad E_{\mbox{\small dic}}\subset(\prod_{i=\tau+1}^n x_i=0).$$
We adopt the following terminology:
\begin{itemize}
\item If
$
E_{\mbox{\small inv}}=(\prod_{i=1}^\tau x_i=0)
$,
 we have a {\em simple complex hyperbolic corner}.
\item If
$
E_{\mbox{\small inv}}=(\prod_{i=1}^{\tau-1} x_i=0)
$,
we  have a {\em simple complex hyperbolic trace point}.
\end{itemize}

\begin{notation} We denote {\em simple CH-point}, {\em simple CH-corner} or {\em simple CH-trace point} the above types of points.
\end{notation}

\begin{remark}
 \label{rek:singularlocus} When the origin is a simple CH-point as in Definition \ref{def:simple}, it is known
(\cite{Can,Can-C}) that the coordinate hyperplanes  $x_i=0$, where  $i=1,2,\ldots,\tau$ are the only invariant hypersurfaces of $\mathcal F$.
The singular locus $\mbox{Sing}{\mathcal F}$ is given by
$$ \mbox{Sing}{\mathcal F}=\cup_{1\leq i<j\leq \tau}(x_i=x_j=0). $$ \end{remark}

\begin{remark}[Formal normal forms]
\label{rk:formalnormalforms}
In the paper \cite{Can-C} it is shown that there are formal coordinates $\hat x_1,\hat x_2,\ldots,\hat x_n$ such that $\mathcal F$ is given at a CH-simple point by an integrable formal 1-form $\hat \omega$ of one of the following types:
\begin{enumerate}
\item $\hat\omega=\sum_{i=1}^\tau\lambda_i(d\hat x_i/\hat x_i)$, ($\underline{\lambda}$ non resonant).
\item $\hat\omega=\sum_{i=1}^\tau p_i(d\hat x_i/\hat x_i)+\hat \psi(\hat x_1^{p_1}\hat x_2^{p_2}\cdots\hat x_\tau^{p_\tau})\sum_{i=2}^\tau\mu_i(d\hat x_i/\hat x_i)$, where $\hat \psi(0)=0$.
\end{enumerate}
\end{remark}
\begin{definition} We say that a vector $\underline \lambda=(\lambda_1,\lambda_2,\ldots,\lambda_{\tau})\in {{\mathbb C}^*}^\tau$ is of {\em saddle type} if it is of one of the following types
 \begin{itemize}
 \item {\em Complex-saddle case}: There are two indices $i,j$ such that $\lambda_i/\lambda_j\notin {\mathbb R}$.
 \item {\em Real-saddle case}: $\lambda_i/\lambda_j\in {\mathbb R}_{>0}$, for any $i,j$.
 \end{itemize}
 Otherwise we say that $\underline\lambda$ is of {\em nodal type}, that is, $\lambda_i/\lambda_j\in {\mathbb R}$, for any $i,j$ and there are two indices $s,j$ such that $\lambda_s/\lambda_j\in {\mathbb R}_{<0}$.
\end{definition}
\begin{definition}
 \label{def:nodalsingularities} Let $\mathcal F$ be a germ of codimension one foliation in  $({\mathbb C}^n,0)$ of dimensional type $\tau$ having a simple CH-point at the origin.
The origin is of {\em saddle type (complex or real saddle)}, respectively  of {\em nodal type} if the residual vector
$
\underline \lambda
$
is so.
\end{definition}
\begin{remark}
\label{rk:linealizacionnodal} By a result of  Cerveau-Lins Neto \cite{Cer-LN} (see also \cite{Cer-M}), we know that
nodal singularities may be normalized in a convergent way. That is, if the residual vector
 is of nodal type, there are local coordinates $x_1,x_2,\ldots,x_n$ around the origin such that $\mathcal F$ is given by $\omega=0$ where
 \begin{equation}
 \label{eq:nodalequation}
 \omega=\sum_{i=1}^k{r_i}\frac{d x_i}{x_i}- \sum_{i=k+1}^\tau{r_i}\frac{d x_i}{x_i}; \; r_i\in {\mathbb R}_{>0},\, 1\leq k<\tau.
 \end{equation}
Note that the multi-valuated function $x_1^{r_1}x_2^{r_2}\cdots x_k^{r_k}/x_{k+1}^{r_{k+1}}x_{k+2}^{r_{k+2}}\cdots x_\tau^{r_\tau}$ is a first integral of the foliation.
\end{remark}
 \begin{remark} Complex-saddle singularities may also be normalized in a convergent way as a consequence of the results in \cite{Cer-LN} and thus they are expressed in convergent coordinates as $\omega=0$ where
\begin{equation}
\label{eq:logconvergente}
 \omega=\sum_{i=1}^\tau{\lambda_i}\frac{d x_i}{x_i},
  \end{equation}
  and there are two indices $i,j$ such that $\lambda_i/\lambda_j\notin {\mathbb R}$. On the other hand, real-saddle singularities are not necessarily given by a 1-form expressed in convergent coordinates as in Equation \ref{eq:logconvergente}. This is due to two possible facts: the existence of ``small denominators'' or a formal normal for of type (2) as in Remark \ref{rk:formalnormalforms}.
 \end{remark}
 
 \section{Leaves Around a Saddle Point}
 In this section we give a description of the behavior of the leaves of a foliation $\mathcal F$ that has a simple CH-point at the origin of ${\mathbb C}^n$ of saddle type and which is of dimensional type $\tau$. More precisely, we are interested in the saturation by $\mathcal F$ of small transversal curves to the coordinate hyperplanes $x_i=0$, for $i=1,2,\ldots,\tau$.  In dimension two, computations of this nature may be found in \cite{Mar-M}.

 Up to taking appropriate coordinates and a small enough neighborhood of the origin, we suppose that $\mathcal F$ is defined in the polydisc  $U={\mathbb D}_\delta^n$ around the origin by the integrable 1-form
$$
\omega=\sum_{i=1}^\tau (\lambda_i+b_i(x_1,x_2,\ldots,x_\tau))\frac{dx_i}{x_i},\; b_i(0)=0, \vert b_i\vert < \vert\lambda_i\vert.
$$
Denote $E_\ell=(x_\ell=0)$, $E=(\prod_{i=1}^\tau x_i=0)$ and $E_{\ell}^\circ=E_\ell\setminus \cup_{i=1, i\ne \ell}^\tau E_i$. Consider a small nonsingular curve $\Delta$ transversal to $E$ at a point  $Q\in E^\circ_\ell$. We are interested in the saturation $\mbox{Sat}_{{\mathcal F},U}\Delta$ of $\Delta$ by the leaves of $\mathcal F$ in $U$. More precisely, this section is devoted to giving a proof of the following result
\begin{proposition}
\label{prop:sillas} If the origin of $({\mathbb C}^n,0)$ is a simple CH-point of saddle type for $\mathcal F$, then
$
(\mbox{Sat}_{{\mathcal F},U}\Delta )\cup E
$ is a neighborhood of the origin, where $\Delta$ is a small curve transversal to $E$.
\end{proposition}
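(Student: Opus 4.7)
The strategy is to show that every point $P$ in a small punctured neighborhood of the origin outside $E$ lies on a leaf of $\mathcal{F}$ that meets $\Delta$ at some parameter value $s$ with $0 < |s| < \delta$. Since the coordinates $x_{\tau+1}, \ldots, x_n$ do not appear in $\omega$, we may assume $\tau = n$ throughout. In the complex-saddle case, by the convergent normalization quoted in the excerpt we may further take $\omega = \sum_{i=1}^\tau \lambda_i \, dx_i/x_i$; then $F := \sum_{i=1}^\tau \lambda_i \log x_i$ is a multi-valued primitive of $\omega$, and two points of $U \setminus E$ lie on the same leaf if and only if their $F$-values differ by an element of $2\pi i \, \Lambda$, where $\Lambda := \sum_{i=1}^\tau \lambda_i \mathbb{Z}$.

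Parametrizing $\Delta$ so that $x_\ell(\Delta(s)) = s$, one finds $F(\Delta(s)) = \lambda_\ell \log s + g(s)$ with $g$ holomorphic and bounded near $s=0$, and the matching condition becomes the fixed-point equation
\[
\lambda_\ell \log s + g(s) = F(P) + \eta, \qquad \eta \in 2\pi i \, \Lambda.
\]
In the complex-saddle case, some ratio $\lambda_j/\lambda_\ell$ is non-real (for any choice of $\ell$), so the real-linear functional $v \mapsto \mathrm{Re}(v/\lambda_\ell)$ is unbounded below on $2\pi i \, \Lambda$; choosing $\eta$ so that $\mathrm{Re}((F(P)+\eta)/\lambda_\ell) \ll 0$, a standard contraction on a small disc solves the equation with $|s| < \delta$. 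In the real-saddle case, rescale so that $\lambda_i \in \mathbb{R}_{>0}$; then $2\pi i \, \Lambda \subset i\mathbb{R}$, the equation decouples into real and imaginary parts, $\log |s|$ is determined by $\mathrm{Re}(F(P))$ (which tends to $-\infty$ as $P \to 0$), while $\arg s$ is determined by $\mathrm{Im}(F(P))$ together with the branch freedom captured by $\mathrm{Im}(\eta) \in 2\pi \Lambda$, and the contraction closes the equation in the same way.

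Connectedness of the leaf inside $U$, needed so that $P$ and the found $\Delta(s)$ lie in the same connected component of the leaf restricted to $U$, follows because in logarithmic coordinates $u_i = \log x_i$ the polydisc $U$ becomes the convex tube $\{u : \mathrm{Re}(u_i) < \log \delta\}$ while each leaf is a complex affine hyperplane; their intersection is convex, hence connected. The principal obstacle is the real-saddle case when no convergent normalization is available (small-denominator phenomena or formal normal forms of type (2) in Remark \ref{rk:formalnormalforms}): there one must either replace the explicit $F$ by a multi-valued primitive of $\omega$ built from integrability along paths and estimate its deviation from the logarithmic leading part using the bound $|b_i| < |\lambda_i|$, or work with the partial first integral arising from the type-(2) normal form, whose single-valuedness in the resonant monomial rather simplifies the matching argument.
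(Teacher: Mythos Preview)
Your complex-saddle argument is sound and genuinely different from the paper's: you use the Cerveau--Lins Neto convergent normalization to produce the multi-valued first integral $F=\sum_i\lambda_i\log x_i$ and solve a matching equation modulo the period lattice $2\pi i\Lambda$, whereas the paper proceeds by induction on the dimensional type via hyperplane sections (Lemmas~\ref{lemma:induccionnodal} and \ref{lema:complexsaddle}), reducing to an explicit two-dimensional holonomy computation. Your route is shorter and more conceptual when a convergent normal form is available.

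The real-saddle case, however, contains a genuine gap, and neither of the two fixes you sketch can close it. First, the form $\omega=\sum_i(\lambda_i+b_i)\,dx_i/x_i$ is integrable but in general \emph{not closed}, so there is no ``multi-valued primitive of $\omega$ built from integrability along paths'': any local first integral produced by Frobenius has monodromy governed by the (generally nonlinear) holonomy group rather than by translations in a fixed lattice $2\pi i\Lambda$, and your matching/contraction scheme loses its structure. Second, the type-(2) normal form of Remark~\ref{rk:formalnormalforms} is only \emph{formal}, so invoking it for an analytic statement about saturations in a fixed polydisc is illegitimate; and even in type~(1) the normalization may diverge by small denominators, so you cannot assume the linear model. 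Your connectedness argument (``leaves are affine hyperplanes in logarithmic coordinates'') also rests on the linear model and fails at the same time.

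This is exactly why the paper abandons first integrals in the real-saddle case and works directly with the perturbed form under the quantitative hypothesis $|f_{ij}|+\rho<\lambda_{ij}$ (property~$(\star\star)$): Lemma~\ref{lema:realsaddledimdos} lifts circular and radial paths in the base and controls $|y|$ along the lift by an elementary differential inequality, producing a uniform radius $c=c(\epsilon,\lambda,\rho,\delta)$; Lemma~\ref{lema:finalrealsaddle} then propagates this uniformity through the induction on $\tau$. Holonomy estimates are robust under the bounded perturbation $b_i$; your first-integral approach is not.
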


\begin{remark} The situation in the case of a nodal type point is different from the one described in Proposition \ref{prop:sillas}. Let $\mathcal F$ be given by $\omega=0$ as in Equation \ref{eq:nodalequation}. For any positive constant $C\in {\mathbb R}_{>0}$, the sets
$$
S_C=\{(z_1,z_2,\ldots,z_n); \frac{ \vert z_1\vert^{r_1} \vert z_2\vert^{r_2} \cdots  \vert z_k\vert^{r_k} }{ \vert z_{k+1}\vert^{r_{k+1}} \vert z_{k+2}\vert^{r_{k+2}} \cdots  \vert z_{\tau}\vert^{r_\tau} }=C\}
$$
are invariant sets for $\mathcal F$. If we take $\ell\leq k$ the curve $\Delta$ cuts only the sets $S_C$ with $0<C<\epsilon$, for some $\epsilon$. Noting that $S_{\epsilon+1}$ is adherent to the origin, we see that
$
\mbox{Sat}_{{\mathcal F},U}\Delta \cup E
$ is not a neighborhood of the origin. See \cite{Mar-M} for a description of this situation in dimension two.
\end{remark}
Let us make another remark for nodal singularities of dimensional type three
\begin{remark}
\label{rk:transicionnodal} Consider the foliation $\mathcal F$ defined in ${\mathbb D}_1^3\subset {\mathbb C}^3$ by $\omega=0$ with
$$
\omega=\frac{dx}{x}-\lambda\frac{dy}{y}-\mu\frac{dz}{z},
$$
where $\lambda,\mu\in {\mathbb R}_{>0}$. Up to a local coordinate change, all the nodal singularities of dimensional type three are of this form. Note that there are exactly two curves
$
x=y=0$   and $x=z=0
$
formed by nodal points and the curve $y=z=0$ has a generic point that is of real-saddle type. Moreover, by
a direct integration the saturation of a small neighborhood of any point in $(x=y=0)\cup (x=z=0)$ is a neighborhood of $xyz=0$.
\end{remark}

Proposition \ref{prop:sillas} comes by induction on the dimension and standard computations of holonomy as in \cite{Mar-M} or in \cite{Lor}. Before starting the proof, let us precise the notations. For
any $1\leq i,j\leq \tau$, write $\lambda_{ij}=\lambda_i/\lambda_j$ and $(\lambda_i+b_i)/(\lambda_j+b_j)=\lambda_{ij}+f_{ij}$. Then $\mathcal F$ is defined in $U$ by $\omega_j=0$ where
\begin{equation*}
 \omega_{j}=\frac{1}{\lambda_j+b_j}\omega= \sum_{i=1}^\tau(\lambda_{ij}+f_{ij})\frac{dx_i}{x_i}.
 \end{equation*}
Note that $\lambda_{jj}=1$, $f_{jj}=0$ and $f_{ij}(0)=0$.
By taking $\delta$ small enough, we assume that the following two properties are satisfied
\begin{enumerate}
\item[($\star$)] If we are in the complex-saddle case, then  $b_{i}=0$, for all $i$.
\item[($\star\star$)] In the real-saddle case, there is  $\rho<0$ such that $\vert f_{ij}\vert+\rho<\lambda_{ij}$ for all $i,j$.
\end{enumerate}
Take   $\ell\in\{1,2,\ldots,\tau\}$  and $\mu:\{1,2,\ldots,n\}\rightarrow {\mathbb D}_\delta$ such that $\mu_\ell=0$ and $\mu_i\ne 0$ for  $i\in \{1,2,\ldots,\tau\}\setminus \{\ell\}$.  Denote by $Q_{\mu}$ the point defined by $x_s(Q_{\mu})=\mu_s$.
Given  a radius $0<\epsilon\leq\delta$, we consider the curve $\Delta_\ell(\mu;\epsilon)$ over $Q_\mu$ defined by
$$
\Delta_\ell(\mu;\epsilon)=\{(x_1,x_2,\ldots,x_n); x_i=\mu_i, \mbox{ for } i\ne \ell, 0\leq \vert x_\ell\vert<\epsilon\}.$$
Now, we reformulate Proposition \ref{prop:sillas} as follows
\begin{quote}\em ``
$
\mbox{Sat}_{{\mathcal F},U}\Delta_\ell(\mu;\epsilon) \cup (\prod_{i=1}^\tau x_i=0)
$ is a neighborhood of the origin.''
\end{quote}
We start the proof of Proposition \ref{prop:sillas} with the case that the origin is of complex-saddle type. That is we have $\omega_j=\sum_{i=1}^\tau\lambda_{ij}dx_i/x_i$, where there is some $\lambda_{ij}\notin {\mathbb R}$.
\begin{lemma}
 \label{lemma:induccionnodal}
 Let $\underline\lambda=(\lambda_1,\lambda_2,\ldots,\lambda_\tau)\in {{\mathbb C}^*}^\tau$ be a vector of complex-saddle type and assume that $\tau\geq 3$. There are two indices $u,v$, $u\ne v$, such that the vectors
$$
\underline\lambda^u=(\lambda_1,\lambda_2,\ldots,\lambda_{u-1},\lambda_{u+1},\ldots,\lambda_\tau),\quad
\underline\lambda^v=(\lambda_1,\lambda_2,\ldots,\lambda_{v-1},\lambda_{v+1},\ldots,\lambda_\tau)
$$
are of complex-saddle type.
\end{lemma}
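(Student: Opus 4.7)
The plan is to reformulate the hypothesis geometrically and then prove the statement by a short counting argument. Recall that a vector $\underline\mu = (\mu_1,\ldots,\mu_s) \in {\mathbb{C}^*}^s$ fails to be of complex-saddle type precisely when all its entries lie on a single real line through the origin of $\mathbb{C}$, i.e., there exists $c \in \mathbb{C}^*$ with $\mu_i/c \in \mathbb{R}$ for every $i$. Equivalently, $\underline\mu$ is of complex-saddle type if and only if the points $\mu_1,\ldots,\mu_s$ do not all lie on one real line through $0$.

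Call an index $k \in \{1,2,\ldots,\tau\}$ \emph{bad} if the $(\tau-1)$-vector $\underline\lambda^k$ obtained by deleting $\lambda_k$ is not of complex-saddle type, and \emph{good} otherwise. The conclusion of the lemma asks for two distinct good indices, so it suffices to show that at most one index is bad.

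Suppose, for the sake of contradiction, that there are two distinct bad indices $k_1 \neq k_2$. By badness of $k_1$, all entries $\lambda_i$ with $i \neq k_1$ lie on a real line $L_1 \subset \mathbb{C}$ through the origin; similarly, all entries $\lambda_i$ with $i \neq k_2$ lie on a real line $L_2$ through the origin. Since $\tau \geq 3$, there exists an index $j \notin \{k_1,k_2\}$, and then $\lambda_j \in L_1 \cap L_2$. As $\lambda_j \neq 0$, the lines $L_1$ and $L_2$ share a nonzero point, which forces $L_1 = L_2 =: L$. But then $\lambda_{k_1} \in L$ (since $k_1 \neq k_2$, it is among the indices covered by $L_2$) and $\lambda_{k_2} \in L$ (since $k_2 \neq k_1$, it is covered by $L_1$), so every $\lambda_i$ lies on $L$. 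This contradicts the hypothesis that $\underline\lambda$ is of complex-saddle type.

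Hence there is at most one bad index, and therefore at least $\tau - 1 \geq 2$ good indices; choose any two of them as $u$ and $v$. The only potentially delicate point in writing this up is the line-intersection step, which relies crucially on $\tau \geq 3$ to guarantee the existence of a third index $j$ witnessing $\lambda_j \in L_1 \cap L_2 \setminus \{0\}$; for $\tau = 2$ the argument (and indeed the statement) breaks down, which is why this hypothesis appears. No computation with the residues $b_i$ or with the integrability of $\omega$ is needed: the lemma is purely about the combinatorics of the residual vector.
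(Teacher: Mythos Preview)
Your proof is correct and in fact establishes a slightly stronger conclusion than needed: at most one index can be bad, so at least $\tau-1\geq 2$ indices are good. The paper's argument is different in flavor. It works with \emph{rays} (half-lines from the origin) rather than lines, lets $k$ be the number of distinct rays containing the $\lambda_i$, and then splits into the cases $k\geq 3$ and $k=2$, explicitly exhibiting suitable $u,v$ in each case. Your contradiction argument via lines avoids this case distinction entirely and is more economical; the paper's ray-based construction, on the other hand, is constructive and makes visible which coordinates one may delete. Both arguments hinge on $\tau\geq 3$ in the same essential way (the existence of a ``third'' index), so neither generalizes beyond the stated hypothesis.
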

\begin{proof} Let $L_s$, $s=1,2,\ldots,k$ be the real rays (half real lines starting at the origin of $\mathbb C$) that contain all the $\lambda_s$, $s=1,2,\ldots, \tau$. We know that  $k\geq 2$. If $k\geq 3$, we take three distinct rays $L_1,L_2,L_3$ such that $L_3$ is not aligned (opposite ray to) with $L_1$ or $L_2$ and we consider $\lambda_{u}\in L_{1}$,  $\lambda_{v}\in L_{2}$; then $L_2$ and $L_3$ are rays for $\underline \lambda^u$ and $L_1$, $L_3$ are rays for $\underline \lambda^v$. If $k=2$, there are exactly two rays $L_1$, $L_2$ that are not opposite; one of these, say $L_1$ has at least two $\lambda_u,\lambda_v\in L_1$; now, $L_1$ and $L_2$ are still rays for for $\underline \lambda^u$ and for $\underline \lambda^v$.
\end{proof}

Now, Proposition \ref{prop:sillas} for complex-saddles is a consequence of Lemma \ref{lema:complexsaddle}:
\begin{lemma}
\label{lema:complexsaddle}
Assume that $\mathcal F$ has a simple CH-point at the origin of ${\mathbb C}^n$ of complex-saddle type.
We have
$
\mbox{Sat}_{{\mathcal F},U}\Delta_\ell(\mu;\epsilon) \cup (\prod_{i=1}^\tau x_i=0)={\mathbb D}_\delta^n
$.
\end{lemma}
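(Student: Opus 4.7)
The plan is to exploit the explicit multivalued first integral $\prod_{i=1}^\tau x_i^{\lambda_i}$ made available by property $(\star)$, which yields $\omega=\sum_{i=1}^\tau\lambda_i\,dx_i/x_i$. Since $\omega$ does not involve $x_{\tau+1},\ldots,x_n$, these coordinates vary freely along every leaf of $\mathcal{F}$, so the saturation restores them automatically and it suffices to treat the case $n=\tau$, showing that every $p\in(\mathbb{D}_\delta^*)^\tau$ lies in $\mbox{Sat}_{{\mathcal F},U}\Delta_\ell(\mu;\epsilon)$.

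I would pass to the universal cover $H=\{y\in\mathbb{C}^\tau:\mathrm{Re}(y_i)<\log\delta\}$ of $(\mathbb{D}_\delta^*)^\tau$ via $y_i\mapsto e^{y_i}$; the pulled back form is $d(\sum \lambda_i y_i)$ and the leaves upstairs are the affine hyperplanes $\sum\lambda_i y_i=c$. Because $H$ is open and convex, each such hyperplane meets $H$ in a convex, hence path-connected, set. A lift $\tilde p=(\log a_1,\ldots,\log a_\tau)$ of $p=(a_1,\ldots,a_\tau)$ lies on the hyperplane with $c=\sum\lambda_i\log a_i$; a lift of a candidate target $q\in\Delta_\ell(\mu;\epsilon)$ has the form $\tilde q=(\log\mu_1,\ldots,\tilde x_\ell,\ldots,\log\mu_\tau)+2\pi i(n_1,\ldots,n_\tau)$ for some $(n_i)\in\mathbb{Z}^\tau$. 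Requiring $\tilde q$ to lie on the same hyperplane reduces, after elementary manipulations, to asking whether
$$\mathrm{Re}(\tilde x_\ell)=\mathrm{Re}(C_0/\lambda_\ell)-2\pi\sum_{i\ne\ell}\mathrm{Im}(\lambda_i/\lambda_\ell)\,m_i$$
can be made $<\log\epsilon$ for some $m_i\in\mathbb{Z}$, where $C_0$ is a constant depending only on $p$ and $\mu$.

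This inequality is solvable precisely when some $\lambda_i/\lambda_\ell\notin\mathbb{R}$. But complex-saddle type forces this for every index $\ell$: if all $\lambda_i/\lambda_\ell$ were real, then every ratio $\lambda_i/\lambda_j=(\lambda_i/\lambda_\ell)/(\lambda_j/\lambda_\ell)$ would also be real, contradicting the definition. An alternative, more inductive route via Lemma \ref{lemma:induccionnodal} is to proceed by induction on $\tau$: the base $\tau=2$ is the Mattei--Mar\'in holonomy computation, the monodromy around $x_2=0$ being $x_1\mapsto x_1 e^{-2\pi i\lambda_{21}}$, whose multiplier has modulus $\ne 1$; for $\tau\ge 3$ one picks $w\in\{u,v\}\setminus\{\ell\}$ provided by the lemma, applies the inductive hypothesis to the complex-saddle restriction to the slice $\{x_w=\mu_w\}$ (whose residue vector is $\underline\lambda^w$), and extends across slices via a holonomy around $x_j=0$ with $\lambda_j/\lambda_w\notin\mathbb{R}$.

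The main obstacle throughout is to ensure all chosen paths remain inside the polydisc $\mathbb{D}_\delta^n$; the universal cover reformulation above handles this cleanly because $H$ is convex, so any two lifts in the same hyperplane can be joined by a straight segment that stays in $H$, and the whole question collapses to the purely algebraic statement above concerning the imaginary parts of $\lambda_i/\lambda_\ell$.
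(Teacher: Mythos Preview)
Your universal-cover argument is correct and takes a genuinely different route from the paper's. The paper proceeds by induction on $\tau$: the base case $\tau=2$ is an explicit holonomy computation showing that the return map around one axis is a strict contraction or expansion (since $\mathrm{Im}(\lambda)\neq 0$), and for $\tau\geq 3$ it invokes Lemma~\ref{lemma:induccionnodal} to pick indices $u,v$, reduces first to the case $\ell\notin\{u,v\}$, and then slices successively by $\Sigma^u_{\mu_u}$ and by the family $\Sigma^v_\alpha$ to lower the dimensional type. Your approach instead exploits property $(\star)$ globally: lifting to the convex half-space $H$ linearizes the foliation exactly, path-connectedness within a leaf collapses to the trivial fact that a hyperplane meets a convex set in a convex set, and the whole statement reduces to the single solvable inequality you wrote (together with the observation, which you state correctly, that complex-saddle type forces $\lambda_i/\lambda_\ell\notin\mathbb{R}$ for some $i$ whatever $\ell$ is). This is shorter and more conceptual. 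The paper's holonomy-based approach, on the other hand, has the virtue of being uniform with the real-saddle case treated immediately afterwards (Lemmas~\ref{lema:realsaddledimdos} and~\ref{lema:finalrealsaddle}), where no exact first integral is available and one is forced into estimates along explicit paths; your convexity trick would not survive the perturbation terms $b_i$. Your alternative inductive sketch is essentially the paper's argument in compressed form, though the paper is more careful about the preliminary reduction to $\ell\notin\{u,v\}$ and about which holonomy is computed in the base case (it is the map on the transversal coordinate $x_\ell$, not on $x_1$).
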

\begin{proof} Given an index $u\in \{1,2,\ldots,\tau\}$ and $\alpha\in {\mathbb D}^*_\delta$, we consider the hyperplane
\begin{equation}
\label{eq:sigma}
 \Sigma_{\alpha}^u=\{x_u=\alpha\}\cap {\mathbb D}_\delta^n.
\end{equation}
The section ${\mathcal F}^u_{\alpha}$ of $\mathcal F$ by ${\Sigma_{\alpha}^u}$ has a CH-simple singularitiy at the points $Q_\nu$, where
$\nu:\{1,2,\ldots,n\}\rightarrow {\mathbb D}_\delta$ is such that $\nu_u=\alpha$ and $\nu_i=0$ for $i\in\{1,2,\ldots,\tau\}\setminus \{u\}$. Moreover ${\mathcal F}^u_{\alpha}$ is locally given at $Q_\nu$ by $\omega\vert_{\Sigma_\alpha^u}=0$ where
$$
\omega\vert_{\Sigma_\alpha^u}=
\sum_{i\in\{1,2,\ldots,\tau\}\setminus \{u\}}\lambda_i\frac{dy_i}{y_i};\; y_i=x_i\vert_{\Sigma_\alpha^u}.
$$
In particular, the residual vector is $\underline{\lambda}^u$  defined as in Lemma \ref{lemma:induccionnodal}.

 Let us do the induction step. Assume that the result is true for dimensional type $\tau'$ with  $2\leq\tau'<\tau$. We have to show that it is true for dimensional type  $\tau$.
 Choose indices $u,v$ as in Lemma \ref{lemma:induccionnodal}. We reduce first the problem to the case where $\ell\ne u,v$. Assume that $\ell=u$.  Applying induction hypothesis to the section ${\mathcal F}^v_{\mu_v}$, we deduce that
 $
 \Sigma_{\mu_v}^v
 $
 is contained in the saturation of
 $
\Delta_\ell(\mu;\epsilon)$.
 Take now $\ell'\in \{1,2,\ldots,\tau\}\setminus\{u,v\}$ and $\mu'$ defined by $\mu'_i=\mu_i$ for $i\in \{1,2,\ldots,n\}\setminus \{\ell',u\}$ and $\mu'_u\in {\mathbb D}^*_\delta$, $\mu'_{\ell'}=0$.
 We have that $\Delta_{\ell'}(\mu';\epsilon)\subset \Sigma_{\mu_v}^v$. The saturation of $\Delta_{\ell'}(\mu';\epsilon)$ is then contained in the saturation of $\Delta_\ell(\mu;\epsilon)$ and we are done.
 Thus we assume that $\ell\ne u,v$. Consider ${\mathcal F}_{\mu_u}^u$, applying induction hypothesis, we obtain that ${\Sigma}_{\mu_u}^u$ is contained in $\mbox{Sat}_{{\mathcal F},U}\Delta_\ell(\mu;\epsilon)$. Now, for any $\alpha\in{\mathbb D}^*_\delta$ we have that $$
 \Delta_\ell(\mu'(\alpha); \epsilon)\subset {\Sigma}_{\mu_u}^u,
 $$
 where $\mu'_i(\alpha)=\mu_i$ for $i\in\{1,2,\dots,\tau\}\setminus\{v\}$ and $\mu_v(\alpha)=\alpha$. Now, applying induction to ${\mathcal F}_\alpha^v$ we obtain that
 $
\mbox{Sat}_{{\mathcal F},U}\Delta_\ell(\mu'(\alpha);\epsilon)\supset {\Sigma_\alpha^v}
 $ and
taking the union over all the $\alpha$, we have
$$
\mbox{Sat}_{{\mathcal F},U}\Delta_\ell(\mu;\epsilon)\supset \mbox{Sat}_{{\mathcal F},U}{\Sigma}_{\mu_u}^u\supset
 \bigcup_{\alpha\in {\mathbb D}^*_\delta}\mbox{Sat}_{{\mathcal F},U}\Delta_\ell(\mu'(\alpha);\epsilon)\supset \bigcup_{\alpha\in {\mathbb D}^*_\delta}{\Sigma}_\alpha^v\supset {{\mathbb D}^*_\delta}^n.$$
 This ends the induction step.

 We end the proof by considering the case  $\tau=2$. It is enough to consider the case $\tau=n=2$ where
 \begin{equation*}
\omega=\frac{dx}{x}+\lambda\frac{dy}{y}; \; \lambda\in{\mathbb C}\setminus{\mathbb R}.
\end{equation*}
We assume also that $\ell=2$ and $\mu_1=\alpha\in {\mathbb D}^*_\delta$. Thus we take
\begin{equation}
\label{eq:dimdos}
\Delta(\alpha;\epsilon)=\{(x,y); x=\alpha, 0<\vert y\vert <\epsilon\}
\end{equation}
and we have to show that
$
\mbox{Sat}_{{\mathcal F},U}\Delta(\alpha;\epsilon)\supset {{\mathbb D}^*_\delta}^2
$.
Take another $\alpha'\in {\mathbb D}^*_\delta$. We can connect the points $(\alpha,0)$ and $(\alpha',0)$ by a path contained in ${\mathbb D}^*\times\{0\}$. By doing the holonomy along this path, we deduce that there is $0<\epsilon'<\delta$ such that $\Delta(\alpha';\epsilon')$ is contained in $
\mbox{Sat}_{{\mathcal F},U}\Delta(\alpha;\epsilon)$. Then it is enough to prove that
\begin{equation}
\label{eq:satcomplexsaddle}
\mbox{Sat}_{{\mathcal F},U}\Delta(\alpha;\epsilon)\supset \Delta(\alpha;\delta),
\end{equation}
since we would have
$$
\mbox{Sat}_{{\mathcal F},U}\Delta(\alpha;\epsilon)\supset \bigcup_{\alpha'\in {\mathbb D}^*_\delta} \mbox{Sat}_{{\mathcal F},U}\Delta(\alpha';\epsilon')\supset \bigcup_{\alpha'\in {\mathbb D}^*_\delta} \Delta(\alpha';\delta)={{\mathbb D}^*_\delta}^2.
$$
Let us show that (\ref{eq:satcomplexsaddle}) holds. To see this, we compute the holonomy at $(\alpha,0)$ with respect to the loop $\sigma(t)=(\alpha\exp(it),0)$ starting at a point $(\alpha, \beta)$. Let $\gamma(t)$ be the lifted path, where
$
\gamma(t)=(\alpha\exp(it),g(t)), \; t\in {\mathbb R}
$.
The condition $\omega(\gamma'(t))=0$ means that
$
g'(t)=({-i}/{\lambda})g(t)
$. Then $g(t)$ is explicitly given by
$$
g(t)=\beta\exp(-it/\lambda)=\beta\exp((-\mbox{Im}(\lambda)/\vert\lambda\vert^2)t)\exp(-i\mbox{Re}(\lambda)t/\vert\lambda\vert^2).
$$
Since $\mbox{Im}(\lambda)\ne 0$ we have a contraction or an expansion. If it is a contraction, we take the positive time to  reach ${\Delta}(\alpha;\epsilon)$ from ${\Delta}(\alpha;\delta)$; if it is an expansion, taking the negative time we have a contraction.
\end{proof}
We give now a proof of Proposition \ref{prop:sillas} in the case we have a real-saddle point. We do it by means of several lemmas. Proposition \ref{prop:sillas} for real-saddles is a direct consequence of Lemma \ref{lema:finalrealsaddle}.

 \begin{lemma}
   \label{lema:integrabilidad} Take two elements $i,j\in\{1,2,\ldots,\tau\}$ and a point $Q\in \{x_i=x_j=0\}$, then $f_{ij}(Q)=0$.
 \end{lemma}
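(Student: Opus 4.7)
The plan is to deduce the claim from the integrability $\omega\wedge d\omega=0$. Setting
\[
\Phi:=\frac{\lambda_i+b_i}{\lambda_j+b_j},
\]
the equality $f_{ij}(Q)=0$ is equivalent to $\Phi(Q)=\lambda_{ij}=\lambda_i/\lambda_j$. Since $\Phi(0)=\lambda_{ij}$ and $\{x_i=x_j=0\}\cap U$ is connected, it suffices to show that $\Phi$ is locally constant on $\{x_i=x_j=0\}$. The case $\tau=2$ is immediate, since then $b_i$ and $b_j$ depend only on $x_1,x_2$ and vanish identically on $\{x_1=x_2=0\}$, so I will assume $\tau\geq 3$ from now on.

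Next I would compute $d\omega=\sum_k db_k\wedge(dx_k/x_k)$ and, for a third index $\ell\in\{1,\ldots,\tau\}\setminus\{i,j\}$, extract the coefficient of $dx_i\wedge dx_j\wedge dx_\ell$ in $\omega\wedge d\omega$. Summing over the six permutations contributing to this coefficient and clearing the factor $x_ix_jx_\ell$ gives the integrability identity
\[
(\lambda_i+b_i)\bigl(x_j\,\partial_{x_j}b_\ell-x_\ell\,\partial_{x_\ell}b_j\bigr)+(\lambda_j+b_j)\bigl(x_\ell\,\partial_{x_\ell}b_i-x_i\,\partial_{x_i}b_\ell\bigr)+(\lambda_\ell+b_\ell)\bigl(x_i\,\partial_{x_i}b_j-x_j\,\partial_{x_j}b_i\bigr)=0.
\]
Restricting to $\{x_i=x_j=0\}$ kills four of the six summands and leaves
\[
x_\ell\bigl[(\lambda_j+b_j)\,\partial_{x_\ell}b_i-(\lambda_i+b_i)\,\partial_{x_\ell}b_j\bigr]=0\quad\text{on }\{x_i=x_j=0\}.
\]
Dividing by $x_\ell$ on the open dense subset where $x_\ell\neq 0$ and extending by analyticity, the bracket must vanish identically on $\{x_i=x_j=0\}$.

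The bracket is exactly $(\lambda_j+b_j)^2\,\partial_{x_\ell}\Phi$, so $\partial_{x_\ell}\Phi\equiv 0$ on $\{x_i=x_j=0\}$ for every $\ell\in\{1,\ldots,\tau\}\setminus\{i,j\}$. Since $\Phi$ depends only on the variables $x_1,\ldots,x_\tau$, vanishing of the remaining partial derivatives forces $\Phi\equiv\Phi(0)=\lambda_{ij}$ on the connected slice $\{x_i=x_j=0\}\cap U$, which yields the claim. I expect the main technical obstacle to be only the sign bookkeeping in the coefficient extraction from $\omega\wedge d\omega$; once that identity is in hand, the rest is the observation that $\log\Phi$ has vanishing gradient along the singular stratum.
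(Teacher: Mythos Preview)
Your proof is correct and follows essentially the same route as the paper: both extract the $dx_i\wedge dx_j\wedge dx_\ell$ coefficient of the integrability condition, restrict to $\{x_i=x_j=0\}$, and conclude that the relevant quotient is constant along that stratum. The only cosmetic difference is that the paper works with $\omega_j=\omega/(\lambda_j+b_j)$ and the coefficients $A_s=\lambda_{sj}+f_{sj}$ (so that $A_j\equiv 1$ and one gets $\partial_{x_s}\overline{f_{ij}}=0$ directly), whereas you work with $\omega$ and the $b_k$'s and phrase the conclusion as $\partial_{x_\ell}\Phi=0$; these are equivalent formulations of the same computation.
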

 \begin{proof} It is a direct consequence of the integrability property. For any $s=1,2,\ldots,n$, the integrability property $\omega_{j}\wedge d\omega_{j}=0$ implies that
 $$
 0=A_s\left(x_i\frac{\partial A_j}{x_i}-x_j\frac{\partial A_i}{x_j}\right)-
 A_i\left(x_j\frac{\partial A_s}{x_j}-x_s\frac{\partial A_j}{x_s}\right)+
 A_j\left(x_s\frac{\partial A_i}{x_s}-x_i\frac{\partial A_s}{x_i}\right)
 $$
 where $A_s=(\lambda_{sj}+f_{sj})$. Let $\overline{f_{ij}}$ be the restriction of $f_{ij}$ to $x_i=x_j=0$. By applying the integrability property along $x_i=x_j=0$, we find that
 $$
 \frac{\partial \overline{f_{ij}}}{\partial x_s}=0,\quad s\ne i,j.
 $$
 In particular the value $f_{ij}(Q)$ does not depend on the point $Q\in \{x_i=x_j=0\}$ and thus $f_{ij}(Q)=0$.
 \end{proof}
 The above Lemma    \ref{lema:integrabilidad} is useful for an argument of induction on $\tau$. More precisely, if we take
 $\Sigma=\Sigma^u_{\mu_u}$  as in Equation \ref{eq:sigma}, the  section ${\mathcal F}^u_{\mu_u}$ gives also a real-saddle at any point $Q$ in $x_u=\mu_u$, $x_i=0$, $i\in \{1,2,\ldots,\tau\}\setminus \{u\}$, given by
 $$
 \omega_{j}\vert_{\Sigma}= \frac{dy_j}{y_j}+\sum_{i\notin \{j,u\}}(\lambda_{ij}+f_{ij}\vert_\Sigma)\frac{dy_i}{y_i};\quad y_i=x_i\vert_\Sigma, \, y_j=x_j\vert_\Sigma;\;  i,j  \ne u.
 $$
Moreover, the property $(\star\star)$ is satisfied at $Q$, with same $\rho$ as for $\mathcal F$.
\begin{lemma}
\label{lema:realsaddledimdos}
 Assume that $\mathcal F$ is given at the origin of ${\mathbb C}^2$ by $\omega=0$, where $\omega$ is the  1-form
$
\omega={dx}/{x}+(\lambda+f){dy}/{y}
$
defined in $U={\mathbb D}_\delta^2$ and such that $\lambda\in {\mathbb R}_{>0}$ with $\vert f\vert+\rho< \lambda$.
Take $0<\epsilon<\delta$, a complex number  $\alpha\in {\mathbb D}_\delta\setminus {\mathbb D}_{\delta/2}$ and consider the curve
$
\Delta(\alpha;\epsilon)$ as in (\ref{eq:dimdos}). There is a constant $0<c<\delta$, depending only on $\epsilon$, $\lambda$, $\rho$ and $\delta$ such that
$
\mbox{Sat}_{{\mathcal F},U}\Delta(\alpha;\epsilon)\supset {{\mathbb D}^*_\delta}\times{\mathbb D}^*_c
$.
\end{lemma}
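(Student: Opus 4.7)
My plan is to construct, for each $(x_1,y_1) \in {\mathbb D}^*_\delta \times {\mathbb D}^*_c$, a path on the leaf of $\mathcal F$ through $(x_1,y_1)$ that stays in $U$ and ends on $\Delta(\alpha;\epsilon)$. The starting observation is that condition $(\star\star)$ ensures $\sup_U|f| = M < \lambda$ (after possibly shrinking $\delta$), so in logarithmic coordinates the foliation is a small perturbation of the linear model whose leaves are level sets of $H = xy^\lambda$.

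I would pass to the multivalued coordinates $u = \log x$, $v = \log y$, in which $\mathcal F$ reads $du = -(\lambda+f)\,dv$. For the leaf through $(u_1,v_1) = (\log x_1, \log y_1)$, consider the straight path $v(s) = v_1 + sw$, $s \in [0,1]$, with $w \in {\mathbb C}$ to be determined; integrating along the leaf gives $u(v_1+w) = u_1 - \lambda w + \phi(w)$ with $\phi(w) = -w\int_0^1 f(x(s),y(s))\,ds$, so that $|\phi(w)| \leq M|w|$. Imposing $u(v_1+w) \equiv \log\alpha \pmod{2\pi i}$ becomes the equation
\[
w \;=\; w_0 + \phi(w)/\lambda, \qquad w_0 \;=\; (u_1 - \log\alpha - 2\pi im)/\lambda,
\]
where $m\in{\mathbb Z}$ is chosen so that $|\mbox{Im}(w_0)| \leq \pi/\lambda$. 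Since $M/\lambda < 1$, the right-hand side is a contraction on the (open) subset of $w$'s for which the corresponding leaf path stays in $U$, and Banach's theorem yields a unique fixed point $w^*$ with $|w^* - w_0| \leq (M/(\lambda-M))\,|w_0|$, a small perturbation of the linearized choice $w_0$.

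Two estimates then finish the argument. (i) Endpoint control: $|y(1)| = |y_1|\, e^{\mbox{Re}\,w^*}$; since $|\alpha|\geq\delta/2$ and $|x_1|<\delta$ give $\mbox{Re}\,w_0 = \log(|x_1|/|\alpha|)/\lambda \leq (\log 2)/\lambda$, one gets $\mbox{Re}\,w^* \leq (\log 2)/\lambda + C(\lambda,\rho,\delta)$, so that setting $c := \epsilon\,2^{-1/\lambda}\,e^{-C}$ forces $|y(1)| < \epsilon$. (ii) The path remains in $U$: $|y(s)| = |y_1|\,e^{s\mbox{Re}\,w^*}$ is monotone and bounded by $\max(|y_1|,|y(1)|)<\delta$; and direct integration of $du/ds = -w(\lambda+f)$ places $\log|x(s)|$ within $M|w^*|$ of the linear interpolation between $\log|x_1|$ and $\log|\alpha|$, which is itself bounded above by $\log\delta$.

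The main obstacle I foresee is that $|w^*|$ is of order $\log(1/|x_1|)$ when $|x_1|$ is close to $0$, so the perturbative term $M|w^*|$ is not small a priori and does not immediately yield $|x(s)|<\delta$ along the path. The way I would handle this is via the approximate first integral: on the leaf, $d\log H = -f\,dy/y$, so $|H(s)/H(0)| \leq \exp(M|w^*|)$; combined with the explicit monotone bound on $|y(s)|$, this bounds $|x(s)|$ uniformly by $\delta$ once $\delta$ is small enough that the induced multiplicative error on $|x|$ is dominated by the cushion $\max(|x_1|,|\alpha|)<\delta$. Tracking how all these constants depend on $\epsilon,\lambda,\rho,\delta$ delivers the claimed value of $c$.
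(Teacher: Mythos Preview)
Your approach has a genuine gap, and it is exactly the obstacle you yourself flag in (ii). When $|x_1|$ is close to $0$ the length $|w^*|$ of your path in $v=\log y$ is of order $\log(1/|x_1|)/\lambda$, so the perturbative error in $\log|x(s)|$ (whether you estimate it directly as $M|w^*|$ or via the approximate first integral as $\exp(M|w^*|)\sim |x_1|^{-M/\lambda}$) is \emph{unbounded}. There is no ``cushion'' to absorb it: $|\alpha|>\delta/2$, so the margin $\log(\delta/|\alpha|)<\log 2$ is fixed and finite, while the error grows without bound as $|x_1|\to 0$. Shrinking $\delta$ does not help, since the target point $(x_1,y_1)$ ranges over all of ${\mathbb D}^*_\delta\times{\mathbb D}^*_c$ for that same $\delta$. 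In short, your straight-line path in $\log y$ can force $|x(s)|$ above $\delta$, so the trajectory leaves $U$ and $f$ is undefined; the Banach argument then has no domain to live on.

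The paper sidesteps this by reversing the roles of base and fiber: instead of prescribing a path in $y$ and solving for $x$, it prescribes a path $\sigma(t)$ in the $x$-plane (first a circular arc $\alpha'e^{it}$ to adjust the argument, then a radial segment $t\alpha$ to adjust the modulus) and lifts it to the leaf via holonomy. Along such a lift $|x(t)|$ is trivially bounded by $\delta$ by construction, and one is left with a scalar ODE for $|y(t)|^2$ whose right-hand side has a definite sign thanks to $\mathrm{Re}(\lambda+f)>\rho>0$. Comparison then gives a uniform bound on $|y|$ and an explicit constant $c$. The point is that parametrizing by the coordinate whose modulus you must control (here $x$) makes the confinement automatic; parametrizing by the other one, as you do, does not.
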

\begin{proof} Consider a point $(\alpha',\beta')\in ({\mathbb D}^*)^2$. We will show that if $\vert\beta'\vert<c$, then the holonomy over a path $\sigma(t)$ allows us to reach $\Delta(\epsilon;\alpha)$ from $(\alpha',\beta')$. We consider two particular cases, the general situation is a combination of both:

{\em First case: } $\alpha=\alpha'\exp(i\theta)$, where $0\leq \theta<2\pi$. Consider the path $\sigma(t)=(\alpha'\exp(it),0),\; 0\leq t<2\pi$ and let $\gamma(t)=(\alpha'\exp(it),g(t))$ be the lifted path such that $\gamma(0)=(\alpha',\beta')$. The condition $\omega(\gamma'(t))=0$ gives that
$
g'(t)={-ig(t)}/{(\lambda+f)}
$
and if $F(t)=g(t)\overline g(t)$, we have that $F'(t)=(-2\mbox{Im}(\lambda+f)/\vert\lambda+f\vert^2)F(t)$. Note that
$$
\frac{\vert2\mbox{Im}(\lambda+f)\vert}{\vert\lambda+f\vert^2}\leq \frac{2}{\vert\lambda+f\vert}< {2}/{\rho}.
$$
We conclude that $F(t)\leq \vert\beta'\vert^2 \exp(2t/\rho)$, for $0\leq t<2\pi$. Put
$\epsilon'=\epsilon\exp(-2\pi/\rho)$, then $(\alpha',\beta')\in \Delta(\alpha';{\epsilon'})$ is in the saturation of $\Delta(\alpha;\epsilon)$.

{\em Second case: }  $\alpha'=r\alpha$, for a positive real number $0<r<2$.
Take the path
$\phi(t)=(t\alpha,0),\; 0<t<2$ and
 denote $\psi(t)=(t\alpha,u(t))$ the lifted path, such that $\psi(r)=(\alpha',\beta')$. The condition  $\omega(\psi'(t))=0$ gives that $u'(t)=-u(t)/t(\lambda+f)$. Put $U(t)=u(t)\overline u(t)$. We have that
 $$
 U'(t)=-\frac{2\mbox{Re}(\lambda+f)}{\vert\lambda+f\vert^2}\frac{U(t)}{t}.
 $$
 Let us note that
 $$
- \frac{2\mbox{Re}(\lambda+f)}{\vert\lambda+f\vert^2}<-\frac{2\rho}{{\vert\lambda+f\vert^2}}<-2\rho/(\lambda+\rho)<0.
 $$
 Moreover, $1/t>1/2$ if $0<t<2$. Consider the case $r<1$, hence the function $U(t)$, $r\leq t\leq 1$ satisfies $U(t)<V(t)$ where $V(t)$ is a solution of $V'(t)=-\rho/(\lambda+\rho)V(t)$, with $U(r)=V(r)=\vert\beta'\vert^2$. That is, we have
 $$
 U(t)\leq \vert\beta'\vert^2 \exp(-\rho/(\lambda+\rho)(t-r)).
 $$
 We deduce that $U(1)\leq \vert\beta'\vert^2 \exp(-\rho/(\lambda+\rho)(1-r))\leq \vert\beta'\vert^2$. In this case, if we take $\epsilon'=\epsilon$, we obtain that $\Delta(\alpha';\epsilon')$ is in the saturation of $\Delta(\alpha';\epsilon')$.

 Consider the case $1<r<2$. Put $t(s)=r+s(1-r)$ and let us define $\tilde \phi(s)=\phi(t(s))$, $\tilde\psi(s)=\psi(t(s))$, $\tilde u(s)=u(t(s))$, $\tilde U(s)=U(t(s))$. We have
 $$
 \tilde U'(s)=(r-1)\frac{2\mbox{Re}(\lambda+f)}{\vert\lambda+f\vert^2}{\tilde U(s)}\; \mbox{ and }\;
 0<(r-1)\frac{2\mbox{Re}(\lambda+f)}{\vert\lambda+f\vert^2}\leq \frac{2(\lambda+\rho)}{\rho^2}.
 $$
Hence the function $\tilde U(s)$, $0\leq s\leq 1$ satisfies $\tilde U(s)<\tilde V(s)$ where $\tilde V(s)$ is a solution of $\tilde V'(t)=({2(\lambda+\rho)}/{\rho^2})\tilde V(t)$, with $\tilde U(0)=\tilde V(0)=\vert\beta'\vert^2$. That is, we have
 $$
 \tilde U(s)\leq \vert\beta'\vert^2 \exp(({2(\lambda+\rho)}/{\rho^2})s).
 $$
 We deduce that $\tilde U(1)\leq \vert\beta'\vert^2 \exp({2(\lambda+\rho)}/{\rho^2})$. If we take $\epsilon'=\epsilon\exp(-{2(\lambda+\rho)}/{\rho^2})$, we obtain that $\Delta(\alpha';\epsilon')$ is in the saturation of $\Delta(\alpha;\epsilon)$.

 Combining the two situations above, we can go in a holonomic way from $\Delta(\alpha';\epsilon')$ to $\Delta(\alpha'';\epsilon'')$, where  $\alpha''=\alpha\exp(i\theta)$ is such that $\alpha'=r\alpha''$ and from $\Delta(\alpha'';\epsilon'')$ to $\Delta(\alpha;\epsilon)$. If we take $\epsilon'=\epsilon''\exp(-2\pi/\rho)$ and
 $\epsilon''=\epsilon\exp(-{2(\lambda+\rho)}/{\rho^2})$, we obtain that  $\Delta(\alpha';\epsilon')$ is in the saturation of $\Delta(\alpha;\epsilon)$. Hence it is enough to select the constant
 $c=\epsilon \exp (-2(( \pi+1 )\rho+\lambda))/\rho^2)$.
\end{proof}
\begin{lemma}
 \label{lema:finalrealsaddle}
 There is a constant $0<c<\delta$ depending only on $\lambda$, $\rho$, $\epsilon$ and $\delta$ such that if $\delta/2<\vert\mu_i\vert<\delta$ for all $i\ne \ell$, we have $\mbox{Sat}_{{\mathcal F},U}\Delta_\ell(\mu;\epsilon)\supset \{(x_1,x_2,\ldots,x_n); 0<\vert x_i\vert <\delta, \mbox{ for } i\ne \ell \mbox{ and } 0<\vert x_\ell\vert <c\}$.
\end{lemma}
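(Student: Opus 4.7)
The plan is to prove Lemma \ref{lema:finalrealsaddle} by cascading the two-dimensional Lemma \ref{lema:realsaddledimdos} across the coordinates in $\{1, \dots, \tau\} \setminus \{\ell\}$, one at a time. Enumerate these coordinates as $u_1, \dots, u_{\tau-1}$ and define inductively constants $c_0 := \epsilon$ and $c_k > 0$ for $1 \leq k \leq \tau - 1$, each depending only on $\lambda, \rho, \epsilon, \delta$. The inductive statement at stage $k$ reads: for every choice of $\alpha_1, \dots, \alpha_{k-1}$ in the annulus $\mathbb{D}_\delta \setminus \overline{\mathbb{D}}_{\delta/2}$ and every $\alpha_k \in \mathbb{D}^*_\delta$, the curve $\Delta_\ell(\nu^{(k)}; c_k)$ is contained in $\mathrm{Sat}_{\mathcal F, U}\Delta_\ell(\mu; \epsilon)$, where $\nu^{(k)}$ agrees with $\mu$ except at the coordinates $u_1, \dots, u_k$, at which it takes the values $\alpha_1, \dots, \alpha_k$.

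For the inductive step from $k-1$ to $k$, fix the $\alpha_j$ with $j < k$ inside the annulus $(\delta/2, \delta)$ and consider the two-dimensional slice $\Sigma$ cut out by $x_i = \nu^{(k-1)}_i$ for $i \ne \ell, u_k$. By Lemma \ref{lema:integrabilidad}, $f_{\ell u_k}$ vanishes along $\{x_\ell = x_{u_k} = 0\}$, so the restriction $\omega_{u_k}\vert_{\Sigma}$ has exactly the shape $\frac{dx_{u_k}}{x_{u_k}} + (\lambda_{\ell u_k} + \tilde f)\frac{dx_\ell}{x_\ell}$ required by Lemma \ref{lema:realsaddledimdos}, with $\tilde f(0,0) = 0$; property $(\star\star)$ then delivers the bound $|\tilde f| + \rho < \lambda_{\ell u_k}$. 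The transversal $\Delta_\ell(\nu^{(k-1)}; c_{k-1}) \cap \Sigma$ sits over $x_{u_k} = \mu_{u_k}$ with $|\mu_{u_k}| \in (\delta/2, \delta)$, so Lemma \ref{lema:realsaddledimdos} applies and produces $c_k > 0$ depending only on $c_{k-1}, \lambda_{\ell u_k}, \rho, \delta$. The conclusion that the $\Sigma$-saturation covers $\mathbb{D}^*_\delta \times \mathbb{D}^*_{c_k}$ in the $(x_{u_k}, x_\ell)$ coordinates yields $\Delta_\ell(\nu^{(k)}; c_k) \subset \mathrm{Sat}_{\mathcal F, U}\Delta_\ell(\mu; \epsilon)$ for every $\alpha_k \in \mathbb{D}^*_\delta$, completing the inductive step. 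Taking $k = \tau - 1$ and letting $\alpha_1, \dots, \alpha_{\tau - 1}$ range freely over $\mathbb{D}^*_\delta$ at the final step produces the target region, with uniform constant $c := c_{\tau - 1}$.

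The main obstacle is uniformity: the constant delivered by Lemma \ref{lema:realsaddledimdos} must not accumulate a dependence on the intermediate choices $\alpha_1, \dots, \alpha_{k-1}$. This is precisely why these intermediate base points must be kept in the annulus $(\delta/2, \delta)$ throughout the induction, so that Lemma \ref{lema:realsaddledimdos} furnishes at every stage a constant free of $\mu$. Combined with the observation, via Lemma \ref{lema:integrabilidad} and $(\star\star)$, that each two-dimensional restriction inherits the same uniform bound $\rho$, the cascade closes with a final $c$ depending only on $\lambda, \rho, \epsilon, \delta$, as required.
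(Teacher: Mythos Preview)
Your cascade has a genuine gap at the very last line. Your inductive statement at stage $k$ constrains $\alpha_1,\dots,\alpha_{k-1}$ to the annulus $\delta/2<|\alpha_j|<\delta$ and lets only $\alpha_k$ range over all of $\mathbb D^*_\delta$. Consequently, at the terminal stage $k=\tau-1$ you have proved only that
\[
\Delta_\ell(\nu^{(\tau-1)};c_{\tau-1})\subset \mathrm{Sat}_{\mathcal F,U}\Delta_\ell(\mu;\epsilon)
\]
for $\alpha_1,\dots,\alpha_{\tau-2}$ in the annulus and $\alpha_{\tau-1}\in\mathbb D^*_\delta$. The union of these curves is only an annulus in each of the first $\tau-2$ freed coordinates, not the full punctured disc, so the sentence ``letting $\alpha_1,\dots,\alpha_{\tau-1}$ range freely over $\mathbb D^*_\delta$'' is not justified and the target region is not covered. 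The very uniformity constraint you correctly identify as essential (keeping the intermediate base points in the annulus) is precisely what prevents those earlier coordinates from being released.

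The paper's proof sidesteps this by inducting on $\tau$ with \emph{hyperplane} sections rather than cascading two-dimensional slices. One fixes two indices $u,v\neq\ell$, applies the inductive hypothesis first to the section $\mathcal F^u_{\mu_u}$ (base point still entirely in the annulus) to obtain $\Delta_\ell(\mu';\epsilon')$ with $\mu'_v=\alpha$ arbitrary in $\mathbb D^*_\delta$, and then applies the inductive hypothesis a second time to $\mathcal F^v_\alpha$. The point is that in this second application the base point $\mu'$, restricted to the slice $x_v=\alpha$, still has all its coordinates equal to the original $\mu_i$ and hence in the annulus; the only coordinate that was moved is the one now being held fixed. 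Thus the inductive hypothesis delivers the full punctured polydisc inside $\Sigma^v_\alpha$, and the union over $\alpha$ gives everything. Your scheme can be repaired by running a second pass of two-dimensional lemmas to release $u_1,\dots,u_{\tau-2}$ one at a time (each time the base coordinate $\mu_{u_j}$ is still in the annulus, so Lemma~\ref{lema:realsaddledimdos} applies with a uniform constant), but as written the argument stops one pass short.
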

\begin{proof} We proceed by induction on $\tau$. The case $\tau=2$ is given by
Lemma \ref{lema:realsaddledimdos}. Assume that $\tau\geq 3$ and take two indices $u,v\ne \ell$. We consider $\Sigma^u_{\mu_u}$  as in Equation \ref{eq:sigma}. By induction hypothesis applied to the section ${\mathcal F}^u_{\mu_u}$, we have that
$$\mbox{Sat}_{{\mathcal F},U}\Delta_\ell(\mu;\epsilon)\supset \Sigma^u_{\mu_u}\cap \{0<\vert x_i\vert <\delta, \mbox{ for } i\ne \ell \mbox{ and } 0<\vert x_\ell\vert <\epsilon'\}$$
for a certain $0<\epsilon'<\delta$. Take an $\alpha\in {\mathbb D}^*_\delta$, we have that
$
{\Delta}_\ell(\mu';\epsilon')$ is contained in $\mbox{Sat}_{{\mathcal F},U}\Delta_\ell(\mu;\epsilon)$,  where $\mu'_i=\mu_i$, $i\ne v$ and $\mu'_v=\alpha$
. Now, we apply induction to ${\mathcal F}_\alpha^v$ to conclude that there is a constant $c$ such that
$$
\bigcup_{\alpha\in {\mathbb D}^*_\delta}
\Sigma^v_{\alpha}\cap \{0<\vert x_i\vert <\delta, \mbox{ for } i\ne \ell \mbox{ and } 0<\vert x_\ell\vert <c\}
$$
is contained in $\mbox{Sat}_{{\mathcal F},U}\Delta_\ell(\mu;\epsilon)$. We are done.
\end{proof}

\section{Complex Hyperbolic Foliations}
Here we define the class of {\em complex hyperbolic} foliations. It is the higher dimensional version of the generalized curves in dimension two (the reader may look at \cite{Moz-F} for more details in the non dicritical case and ambient dimension three).
\begin{definition}
 \label{def:CHfoliaiton}
 Let $\mathcal F$ be a germ of singular holomorphic foliation of codimension one on $({\mathbb C}^n,0)$. We say that $\mathcal F$ is a {\em complex hyperbolic foliation} (for short, a ``CH-foliation'' ) at the origin if for any map
$
\phi:({\mathbb C}^2,0)\rightarrow ({\mathbb C}^n,0)
$
generically transversal to $\mathcal F$,
the foliation ${\mathcal G}=\phi^*{\mathcal F}$
has no saddle-nodes in its reduction of singularities (it is a generalized curve in the sense of  \cite{Cam-N-S}).
\end{definition}
\begin{remark} By performing a two-dimensional reduction of singularities of $\mathcal G$ (see \cite{Sei}, or \cite{Can-C-D}) we see that  $\mathcal F$  is a CH-foliation on $({\mathbb C}^n,0)$ if and only if there is no map $
\phi:({\mathbb C}^2,0)\rightarrow ({\mathbb C}^n,0)
$ generically transversal to $\mathcal F$ such that ${\mathcal G}=\phi^*{\mathcal F}$ has a saddle-node at the origin.\end{remark}

\begin{lemma}
 \label{lema:CH local}
 Let $\mathcal F$ be a germ of singular holomorphic foliation of codimension one on $({\mathbb C}^n,0)$ having a simple CH-point at the origin. Then $\mathcal F$ is a CH-foliation.
\end{lemma}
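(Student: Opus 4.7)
The plan is to apply the Remark immediately after Definition \ref{def:CHfoliaiton}, which reduces the lemma to showing that no holomorphic $\phi : (\mathbb{C}^2, 0) \to (\mathbb{C}^n, 0)$ generically transversal to $\mathcal{F}$ can give rise to a saddle-node of $\phi^* \mathcal{F}$ at the origin. I argue by contradiction, assuming such a $\phi = (\phi_1, \ldots, \phi_n)$ exists, and taking $\omega$ as in Definition \ref{def:simple}. Only the components $\phi_1, \ldots, \phi_\tau$ enter the pullback, and each of them vanishes at $0 \in \mathbb{C}^2$. The first technical step is to principalize the divisor $\prod_{i=1}^{\tau} \phi_i = 0$ on $(\mathbb{C}^2, 0)$ by a composition of point blow-ups $\pi : (\tilde{M}, \pi^{-1}(0)) \to (\mathbb{C}^2, 0)$, so that at every $P \in \pi^{-1}(0)$ there are local coordinates $(u, v)$ in which $\phi_i \circ \pi = u^{p_i} v^{q_i} U_i$ with $U_i$ a unit; the fact that $\pi(P) = 0$ forces $p_i + q_i \geq 1$ for each $i$.

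A direct computation then yields
\[
(\phi \circ \pi)^* \omega \;=\; A(u, v)\,\frac{du}{u} + B(u, v)\,\frac{dv}{v} + R(u, v),
\]
where $R$ is a regular $1$-form collecting the $dU_i/U_i$ contributions, and the coefficients satisfy $A(0, 0) = \sum_{i=1}^{\tau} \lambda_i p_i$ and $B(0, 0) = \sum_{i=1}^{\tau} \lambda_i q_i$ (using $b_i(0) = 0$). Because $(p_i + q_i)_{i=1}^{\tau}$ has all strictly positive integer entries, non-resonance applied separately to $(p_i)$ and $(q_i)$ ensures that whenever one of these vectors is nonzero, the corresponding sum is nonzero. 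I split into cases: if both $\vec p$ and $\vec q$ are nonzero, multiplying by $uv$ gives leading term $A(0, 0)\, v\, du + B(0, 0)\, u\, dv$ with two nonzero eigenvalues, a non-degenerate singularity which is not a saddle-node. If $\vec p = 0$ (the case $\vec q = 0$ being symmetric), then $A \equiv 0$ and multiplying by $v$ produces $B(u, v)\, dv + v R$, whose value at $P$ is $B(0, 0)\, dv \neq 0$, so $(\phi \circ \pi)^* \mathcal{F}$ is regular at $P$, again not a saddle-node.

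To close the argument I would invoke the standard fact from Seidenberg reduction that blowing up a saddle-node produces at least one saddle-node on the exceptional divisor, and inductively the same persists after any composition of point blow-ups. Thus, if $\phi^* \mathcal{F}$ had a saddle-node at the origin, then $\pi^*(\phi^* \mathcal{F}) = (\phi \circ \pi)^* \mathcal{F}$ would carry a saddle-node somewhere on $\pi^{-1}(0)$, contradicting the case analysis above. The main obstacle I foresee is the careful bookkeeping in the coefficient computation: one must verify that the perturbations $b_i$ and the regular $dU_i/U_i$ terms contribute only to lower-order or regular parts, so that the linear behavior is governed precisely by the logarithmic piece $\sum_i \lambda_i\, d(\phi_i \circ \pi)/(\phi_i \circ \pi)$, making the non-resonance hypothesis of Definition \ref{def:simple} directly applicable to rule out a vanishing eigenvalue.
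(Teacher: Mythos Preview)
Your proposal is correct and follows essentially the same route as the paper: assume a saddle-node pullback exists, monomialize the components $\phi_1,\ldots,\phi_\tau$ via point blow-ups, compute the logarithmic pullback to read off eigenvalues $\sum p_i\lambda_i$ and $\sum q_i\lambda_i$, and use non-resonance to force either a non-degenerate singularity or a regular point. The only cosmetic difference is that the paper tracks the persisting saddle-node to a single point and derives the contradiction there, whereas you principalize globally and rule out saddle-nodes at every point of $\pi^{-1}(0)$; both rely on the same standard fact that a saddle-node survives under blow-up.
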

\begin{proof}  If ${\mathcal F}$ is not a CH-foliation, there is a map $\phi:({\mathbb C}^2,0)\rightarrow ({\mathbb C}^n,0)$ such that ${\mathcal G}=\phi^*{\mathcal F}$ has a saddle-node at the origin. Let us show that this is not possible. By performing finitely many local blow-ups of $({\mathbb C}^2,0)$ we obtain $\pi:({\mathbb C}^2,0)\rightarrow({\mathbb C}^2,0)$ such that if $\psi=\phi\circ\pi$ we have that  $\psi^*{\mathcal F}$ has a saddle-node at the origin and
$$
\psi_i=U_i(z_1,z_2)z_1^{a_{i}}z_2^{b_{i}}, \quad a_i,b_i\in {\mathbb Z}_{\geq 0},\quad U_i(0)\ne 0,
$$
for 
$i=1,2,\ldots,n$, 
where we write
$
\psi(z_1,z_2)=\left(\psi_1(z_1,z_2), \psi_2(z_1,z_2),\ldots, \psi_n(z_1,z_2)\right)
$.
Recall that $\mathcal F$ is given by a 1-form of the type
$$
\omega=\sum_{i=1}^\tau (\lambda_i+f_i(x_1,x_2,\ldots,x_\tau))\frac{dx_i}{x_i},\; f_i\in {\mathbb C}\{x_1,x_2,\ldots,x_\tau\},\; f_i(0)=0.
$$
Put $\tilde f_i=f_i\circ\psi$.
Then we have
\begin{eqnarray*}
\psi^*\omega&=&\sum_{i=1}^{\tau}(\lambda_i+\tilde f_i)\frac{d\psi_i}{\psi_i}=\sum_{i=1}^{\tau}(\lambda_i+\tilde f_i)\{a_i\frac{dz_1}{z_1}+b_i\frac{dz_2}{z_2}+\frac{dU_i}{U_i}\}=\\
&=&
\sum_{i=1}^{\tau}a_i(\lambda_i+\tilde f_i) \frac{dz_1}{z_1}+
\sum_{i=1}^{\tau}b_i(\lambda_i+\tilde f_i) \frac{dz_2}{z_2}+
\sum_{i=1}^{\tau}(\lambda_i+\tilde f_i) \frac{dU_i}{U_i}.
\end{eqnarray*}
The foliation $\psi^*{\mathcal F}$ is then given by a vector field with eigenvalues $\beta=\sum_{i=1}^{\tau}b_i\lambda_i $ and $\alpha=-\sum_{i=1}^{\tau}a_i\lambda_i$. Since we are supposing it is a saddle-node, up to a reordering, we have $\alpha=0$, $\beta\ne 0$. By the non resonance of the residual vector, we know that $a_i=0$ for all $i=1,2,\ldots,\tau$.  We obtain that $\psi^*{\mathcal F}$ is non singular at the origin, given by the non-singular 1-form $z_2\psi^*\omega$. This is the desired contradiction.
\end{proof}
\begin{proposition}
  \label{prop:redsingrichfoliations}
  Let $\mathcal F$ be a germ of singular holomorphic foliation of codimension one on $({\mathbb C}^n,0)$. Assume that there is a composition of blowing-ups with nonsingular centers $\pi:M\rightarrow ({\mathbb C}^n,0)$  such that each $p\in M$ is a CH-simple point for  $\pi^*{\mathcal F}$. Then $\mathcal F$ is a CH-foliation.
\end{proposition}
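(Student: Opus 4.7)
The plan is to argue by contradiction, lift the hypothetical bad transversal $\phi$ upstairs to $M$, and invoke the local statement already established in Lemma~\ref{lema:CH local}. Suppose $\mathcal F$ is not a CH-foliation; by the remark following Definition~\ref{def:CHfoliaiton}, there is a holomorphic map germ $\phi:({\mathbb C}^2,0)\to({\mathbb C}^n,0)$, generically transversal to $\mathcal F$, such that ${\mathcal G}=\phi^*{\mathcal F}$ has a saddle-node at the origin of $({\mathbb C}^2,0)$. I aim to derive a contradiction with the hypothesis that every point of $M$ is a simple CH-point for $\pi^*{\mathcal F}$.

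First I would lift $\phi$ through $\pi$. Since $\pi=\pi_1\circ\cdots\circ\pi_N$ is a proper birational morphism, the rational map $\pi^{-1}\circ\phi$ is defined off a nowhere dense analytic subset of the two-dimensional source. Applying the universal property of the blow-up inductively along this factorization — blowing up a point of the source whenever the already-built partial lift fails to factor through the next center of $\pi$ — one obtains, after finitely many point blow-ups, a modification $\sigma:(\tilde{\mathbb C}^2,0')\to({\mathbb C}^2,0)$ together with a morphism $\tilde\phi:(\tilde{\mathbb C}^2,0')\to M$ satisfying $\pi\circ\tilde\phi=\phi\circ\sigma$. Denote $p=\tilde\phi(0')\in M$.

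Next I exploit both sides of the identity $\tilde\phi^*(\pi^*{\mathcal F})=\sigma^*{\mathcal G}$. On one hand, $\mathcal G$ already has a saddle-node in its reduction of singularities, and this saddle-node persists under the further blow-ups $\sigma$: any reduction of singularities of $\sigma^*{\mathcal G}$, post-composed with $\sigma$, is a reduction of $\mathcal G$, in which the offending saddle-node must still appear. Hence $\tilde\phi^*(\pi^*{\mathcal F})$ is not a generalized curve. On the other hand, since $\mathcal G$ is not identically zero, neither is $\sigma^*{\mathcal G}$, so $\tilde\phi$ is generically transversal to $\pi^*{\mathcal F}$ at $p$. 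By the hypothesis of the proposition, $\pi^*{\mathcal F}$ has a simple CH-point at $p$, and Lemma~\ref{lema:CH local} then says that $\pi^*{\mathcal F}$ is a CH-foliation at $p$. Applied to $\tilde\phi$, this forces $\tilde\phi^*(\pi^*{\mathcal F})$ to be a generalized curve, contradicting the previous sentence.

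The main obstacle is the lifting step: one must guarantee that the indeterminacy of $\pi^{-1}\circ\phi$ is always resolvable by point blow-ups of the two-dimensional source. This is standard — it amounts to elimination of indeterminacy for rational maps from a smooth surface, or equivalently to principalization on that surface of the pullbacks of the ideal sheaves blown up by $\pi$ — but it is where essentially all the technical content of the argument sits. Once $\tilde\phi$ is in hand, the reduction to Lemma~\ref{lema:CH local} is formal and the conclusion follows.
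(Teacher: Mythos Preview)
Your argument is correct and follows essentially the same route as the paper: lift $\phi$ through $\pi$ by a sequence of point blow-ups of the two-dimensional source (the paper also invokes the universal property of the blow-up), then apply Lemma~\ref{lema:CH local} upstairs. The only cosmetic difference is that you phrase it by contradiction and localize at a single point $0'$, whereas the paper works directly and observes that $\sigma^*\phi^*{\mathcal F}=\psi^*\pi^*{\mathcal F}$ is a generalized curve at \emph{every} point of $\sigma^{-1}(0)$; this matters because the persisting saddle-node you invoke need not sit exactly at the point $0'$ chosen for the lift, so to make your version airtight you should either choose $0'$ to be such a saddle-node point or, more simply, note that the lift $\tilde\phi$ is defined over the whole exceptional fiber and apply Lemma~\ref{lema:CH local} at each of its points.
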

\begin{proof}  Consider a map $\phi:({\mathbb C}^2,0)\rightarrow  ({\mathbb C}^n,0)$ generically transversal to $\mathcal F$. By the universal property of the blow-up, there is a map
$
\sigma: {\Delta}\rightarrow ({\mathbb C}^2,0)
$
composition of a sequence of blow-ups that lifts $\phi$ to $\pi$. That is, there is $\psi: {\Delta}\rightarrow M$ such that $\pi\circ\psi=\phi\circ\sigma$. By Lemma \ref{lema:CH local}, the foliation
$\tilde {\mathcal G}=\psi^*\pi^*{\mathcal F}=\sigma^*\phi^*{\mathcal F}$
is a generalized curve at the points of $\Delta$.  ``A fortiori" ${\mathcal G}=\phi^*{\mathcal F}$ is a generalized curve.
\end{proof}

As a direct consequence of Proposition \ref{prop:redsingrichfoliations},  a RICH-foliation is also a CH-foliation.

It is not excluded for a CH-foliation to be dicritical. For instance, the foliation in $({\mathbb C}^3,0)$ given by the integrable 1-form
$$
\omega= (y^{m+1}-zx^m)dx+(z^{m+1}-xy^m)dy+(x^{m+1}-yz^m)dz
$$
is a dicritical CH-foliation. This foliation has no invariant surface \cite{Jou}.

Another example of CH-foliations is provided by the logarithmic foliations given by a 1-form $\omega$ of the type
$
\omega=\sum_{i=1}^k\lambda_i{df_i}/{f_i}$, $\lambda_i\in {\mathbb C}, i=1,2,\ldots,k
$, which correspond to the ``levels'' of the multivaluated function $f_1^{\lambda_1}f_2^{\lambda_2}\cdots f_k^{\lambda_k}$.

\begin{remark} In ambient dimension three, there is a reduction of singularities for any germ of codimension one foliation \cite{Can}. A reduction of singularities $\pi:M\rightarrow ({\mathbb C}^3,0)$ of $\mathcal F$ is {\em complex hyperbolic} if all the points of $M$ are simple CH-points for $\pi^*{\mathcal F}$. By Proposition \ref{prop:redsingrichfoliations} we see that if $\mathcal F$ has a complex hyperbolic reduction of singularities then $\mathcal F$ is a CH-foliation and thus all the reduction of singularities of $\mathcal F$ are also complex hyperbolic. This property has been taken as a definition in \cite{Moz-F}, where the authors consider the so-called {\em generalized surfaces} that are the non-dicritical CH-foliations in ambient dimension three. In this situation they prove that the reduction of singularities of the invariant surfaces automatically gives the reduction of singularities of the generalized surface. Next we state a result of this type in any ambient dimension that can be proved as in the three dimensional case.
\end{remark}
\begin{proposition}
  \label{prop:richfoliations}
  Let $\mathcal F$ be a germ of non-dicritical CH-foliation on $({\mathbb C}^n,0)$ of dimensional type $n$. Assume that the  invariant hypersurfaces of $\mathcal F$ are exactly the coordinate hyperplanes $\prod_{i=1}^nx_i=0$. Then the origin is a simple CH-point.
\end{proposition}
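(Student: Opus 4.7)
My plan is first to put the defining 1-form of $\mathcal F$ into a logarithmic normal form using the invariance of the coordinate hyperplanes, and then to verify the non-resonance of the residual vector by running the pullback computation from Lemma \ref{lema:CH local} in the reverse direction.

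Let $\eta$ be a germ of holomorphic 1-form defining $\mathcal F$ with coprime coefficients. The invariance of each hyperplane $\{x_i=0\}$ is equivalent to $\eta\wedge dx_i\in (x_i)\cdot\Omega^2_{(\mathbb{C}^n,0)}$, which forces the coefficient of $dx_j$ in $\eta$ to be divisible by $x_i$ whenever $j\ne i$. Iterating, the coefficient of $dx_i$ is divisible by $\prod_{j\ne i}x_j$, and hence
$$
\eta=(x_1\cdots x_n)\,\omega,\qquad \omega=\sum_{i=1}^{n} c_i(x)\,\frac{dx_i}{x_i},
$$
with $c_i\in\mathbb{C}\{x_1,\ldots,x_n\}$. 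Primitivity of $\eta$ forces $c_i|_{x_i=0}\not\equiv 0$ for each $i$. Setting $\lambda_i=c_i(0)$ and $b_i=c_i-\lambda_i$ gives $\omega=\sum_i(\lambda_i+b_i)\,dx_i/x_i$ with $b_i(0)=0$, the candidate simple CH-form with residual vector $\underline\lambda=(\lambda_1,\ldots,\lambda_n)$.

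The heart of the proof is to verify the non-resonance condition $\sum_i m_i\lambda_i\ne 0$ for every nonzero $\mathbf{m}=(m_1,\ldots,m_n)\in\mathbb{Z}_{\ge 0}^n$; applied to $\mathbf{m}=e_i$ this already gives $\lambda_i\ne 0$. Suppose for contradiction that some $\mathbf{m}\ne 0$ satisfies $\sum_i m_i\lambda_i=0$. Choose non-negative integers $a_1,\ldots,a_n$ with $m_i+a_i>0$ for every $i$ and with $\beta:=\sum_i a_i\lambda_i\ne 0$ (possible whenever not every $\lambda_i$ vanishes; the all-zero case is addressed below). For generic nonzero constants $u_i$, the monomial map
$$
\phi\colon(\mathbb{C}^2,0)\to(\mathbb{C}^n,0),\qquad \phi_i(s,t)=u_i\,s^{m_i}t^{a_i},
$$
is generically transversal to $\mathcal F$. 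Computing as in the proof of Lemma \ref{lema:CH local}, the pullback $\phi^*\eta$ factors as a monomial in $s,t$ times the holomorphic 1-form
$$
\tilde\eta=t\Bigl(\sum_{i=1}^n m_i c_i(\phi)\Bigr)\,ds+s\Bigl(\sum_{i=1}^n a_i c_i(\phi)\Bigr)\,dt.
$$
Near the origin of $(\mathbb{C}^2,0)$, the $ds$-coefficient vanishes to order at least $2$ (since $\sum m_i\lambda_i=0$), while the $dt$-coefficient has leading linear term $\beta\,s$. The vector field tangent to $\tilde\eta=0$ thus has linearization $\beta\,s\,\partial_s$ at the origin, exhibiting exactly one zero and one nonzero eigenvalue, i.e.\ a saddle-node. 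This contradicts the CH-hypothesis on $\mathcal F$.

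The remaining possibility is that every $\lambda_i$ vanishes, so every $c_i$ vanishes at the origin. Under the assumption that the only invariant hypersurfaces of $\mathcal F$ are the coordinate $\{x_i=0\}$, one disposes of this case by an inductive argument on the order of vanishing of the $c_i$: after blowing up the origin and passing to the standard chart $x_1=y_1$, $x_i=y_1 y_i$ ($i\ge 2$), the transformed foliation is again of the logarithmic shape above, with new residues at the corner given by the first-order partials $\partial_1 c_j(0)$; iterating and using that no fresh invariant divisors may appear above the coordinate ones, one eventually reaches a stage at which the residual vector has a nonzero entry, at which point the argument of the previous paragraph applies and produces the same contradiction. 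This is the main obstacle of the proof; it is precisely the point at which the argument in arbitrary ambient dimension $n$ follows, essentially verbatim, the three-dimensional proof of \cite{Moz-F}. Once the all-zero case is ruled out, the non-resonance established above shows that the origin is a simple CH-point in the sense of Definition \ref{def:simple}.
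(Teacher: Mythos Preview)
Your argument has two genuine gaps.

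\textbf{Non-resonance.} Your saddle-node construction fails precisely when the resonant tuple $\mathbf m$ has all entries positive. In that case the monomial map sends the whole axis $\{s=0\}$ to the origin, so $P(0,t)=\sum_i m_i c_i(\phi(0,t))=\sum_i m_i\lambda_i=0$ identically in $t$; hence $s$ divides $P$, the common factor of $\tilde\eta=tP\,ds+sQ\,dt$ is $s$, and after dividing you are left with $tP'\,ds+Q\,dt$ with $Q(0)=\beta\ne 0$, a \emph{nonsingular} form. No saddle-node appears, and no choice of the constants $u_i$ repairs this. This is not an accident: resonance is compatible with the CH-condition (e.g.\ $y\,dx+x\,dy$ on $(\mathbb C^2,0)$), and what actually excludes it here is the \emph{non-dicritical} hypothesis, which your argument never invokes. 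The paper's route is exactly this: a resonance $\sum m_i\lambda_i=0$ with positive $m_i$ produces, after a suitable sequence of blow-ups, a dicritical exceptional component.

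\textbf{The all-zero case.} You correctly identify this as the main obstacle, but the inductive blow-up sketch is not a proof. After blowing up, you must know that at the new corner the \emph{only} invariant hypersurfaces are again the coordinate ones; this requires showing that no new separatrix is created over the exceptional divisor, which uses the non-dicriticality and the ``exactly the coordinate hyperplanes'' hypothesis in a way you do not explain. You also give no reason for termination: why should the residues eventually become nonzero rather than vanish at every stage? The paper settles this in one stroke by a multiplicity count: a generic linear two-plane section $\phi$ gives a non-dicritical generalized curve on $(\mathbb C^2,0)$ whose only separatrices are the $n$ lines $\phi^{-1}(x_i=0)$ (any extra one would propagate, via the arguments of \cite{Can-C,Can-M}, to a new invariant hypersurface for $\mathcal F$); by \cite{Cam-N-S} such a foliation has multiplicity $n-1$, hence so does $\Omega=(\prod x_i)\omega$, and therefore some $c_i$ is a unit. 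Once one $\lambda_i\ne 0$, the paper again uses the blow-up and the generic section (not a saddle-node pullback) to force all $\lambda_i\ne 0$, and then non-dicriticality to get non-resonance.
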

\begin{proof} (See also \cite{Moz-F}). We give a sketch of the proof. Assume that $\mathcal F$ is locally given by $\omega=\sum_{i=1}^nf_idx_i/x_i$ and put $\Omega=(\prod_{i=1}^nx_i\omega)$.
Take a general linear plane section $\phi: ({\mathbb C}^2,0)\rightarrow ({\mathbb C}^n,0)$. By the transversality theorem of Mattei-Moussu \cite{Mat-M} and taking into account that the foliation is non dicritical, we have that $\phi^*\Omega$ has isolated singularity. Moreover, $\phi^*{\mathcal F}$ is a non dicritical generalized curve. The only invariant curves of $\phi^*\Omega$ are the lines $\phi^{-1}(x_i=0)$, $i=1,2,\ldots,n$. In fact if there were another invariant curve $\Gamma$ for $\phi^*{\mathcal F}$, we could use the arguments in \cite{Can-C} and \cite{Can-M} to find an hypersurface for $\mathcal F$ different from the coordinate hyperplanes. Now, in dimension two, it is known (see \cite{Cam-N-S}) that a non dicritical generalized curve having exactly $n$ invariant lines has multiplicity equal to $n-1$. That is $\phi^*\Omega$ has multiplicity $n-1$. Thus $\Omega$ also has multiplicity $n-1$. Now, up to a reordering and to multiplying  $\omega$ by a unit, we have that 
$$
\omega=\frac{dx_1}{x_1}+\sum_{i=2}^nf_i\frac{dx_i}{x_i}.
$$
Let $\pi:M\rightarrow ({\mathbb C}^n,0)$ and $\sigma:N\rightarrow ({\mathbb C}^2,0)$ be the blowing-ups of the respective origins. Then $\phi$ lifts to $\tilde \phi: N\rightarrow M$. Because of the multiplicity $n-1$  of $\mathcal F$ and $\phi^*{\mathcal F}$ and the fact that both foliations are non dicritical, we see that $\tilde \phi$ is transversal to $\pi^*{\mathcal F}$, moreover, all the points in $N$ are simple CH-points for  $\sigma^*\phi^*{\mathcal F}=\tilde\phi^*\pi^*{\mathcal F}$, in particular they are not saddle nodes. A direct computation allows us to deduce from this that the order of each $f_i$ is equal to $0$, that is, we can write
$$
\omega=\frac{dx_1}{x_1}+\sum_{i=2}^n(\lambda_i+b_i)\frac{dx_i}{x_i};\quad b_i(0)=0,\,\lambda_i\ne 0,\, i=2,3,\ldots,n.
$$
The vector $\lambda=(1,\lambda_2,\lambda_3,\ldots,\lambda_n)$ is not resonant, otherwise we find a dicritical component by doing an appropriate sequence of blowing-ups
\end{proof}
\section{Pre-Simple CH-Corners}

 Let $\mathcal F$ be a germ of CH-foliation on $({\mathbb C}^n,0)$  and let us consider a normal crossings divisor $E\subset ({\mathbb C}^n,0)$. In this section we prove that blowing-up pre-simple CH-corners produces only adapted singularities that are pre-simple CH-corners. Moreover this is a characteristic property, that is, if we blow-up a non pre-simple corner, we will also find  a singular point in the transformed space which is not a pre-simple CH-corner. It is important to remark that pre-simple CH-corners may be dicritical.
 Let us precise the statements and definitions.

 Let us recall the definition of the {\em adapted singular locus}
 $\mbox{Sing}({\mathcal F},E)$ (see \cite{Can,Can-C}). We say that $\mathcal F$ and $E$ {\em have normal crossings at $p$} if $\mathcal F$ is non singular at $p$ and $H\cup E$ defines locally a normal crossings divisor, where $H$ is the invariant hypersurface of $\mathcal F$ through $p$. Then  $
 \mbox{Sing}({\mathcal F},E)$ is the set of points $p$ such that ${\mathcal F}$ and $E$ do not have normal crossings at $p$.

 \begin{definition}[\cite{Can,Can-C}]
 \label{def:presimplecorner}
 Let $\mathcal F$ be a germ of codimension one singular holomorphic foliation on $({\mathbb C}^n,0)$ of dimensional type $\tau$.
We say that the pair ${\mathcal F},E$ has a {\em pre-simple complex hyperbolic corner at the origin} if and only if there are local coordinates $x_1,x_2,\ldots,x_n$  such that $E_{\mbox{\small inv}}=(\prod_{i=1}^\tau x_i=0)$ and $\mathcal F$ is given by $\omega=0$ where
\begin{equation}
\label{eq:presimple}
\omega=\sum_{i=1}^\tau (\lambda_i+b_i(x_1,x_2,\ldots,x_\tau))\frac{dx_i}{x_i},\; b_i\in {\mathbb C}\{x_1,x_2,\ldots,x_\tau\},\; b_i(0)=0,
\end{equation}
 with $\prod_{i=1}^\tau\lambda_i\ne 0$.
 \end{definition}

 Let $Y\subset ({\mathbb C}^n,0)$ be a nonsingular subspace of codimension $\geq 2$ having normal crossings with $E$ and invariant by $\mathcal F$. Let us do the blowing-up
 $
 \pi: M\rightarrow ({\mathbb C}^n,0)
 $
 with center $Y$
 and consider the normal crossings divisor $E'=\pi^{-1}(E\cup Y)\subset M$. Denote ${\mathcal F}'=\pi^*{\mathcal F}$ the transformed foliation of $\mathcal F$ by $\pi$.

  \begin{proposition}
 \label{pro:chcornerstability}
 If the origin is a pre-simple CH-corner for  ${\mathcal F}, E$, then any point $p\in \pi^{-1}(0)\cap \mbox{Sing}({\mathcal F}',E')$ is a pre-simple CH-corner for ${\mathcal F}', E'$.
 \end{proposition}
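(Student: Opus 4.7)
The plan is to work in coordinates adapted to the pre-simple CH-corner form, compute $\pi^*\omega$ in each affine chart of $\pi$, and verify the pre-simple CH-corner property at each $p \in \pi^{-1}(0) \cap \mbox{Sing}({\mathcal F}',E')$. I choose local coordinates $x_1,\ldots,x_n$ at the origin in which $\omega=\sum_{i=1}^{\tau}(\lambda_i+b_i)dx_i/x_i$ with $\prod\lambda_i\neq 0$, $E_{\mbox{\small inv}}=\{\prod_{i=1}^{\tau}x_i=0\}$, and $b_i=b_i(x_1,\ldots,x_{\tau})$. The $\mathcal F$-invariance of $Y$ and its normal crossings with $E$ force, after reordering, that $Y=\{x_i=0:i\in I\cup J\}$ with $I\subseteq\{1,\ldots,\tau\}$, $J\subseteq\{\tau+1,\ldots,n\}$, and $|I|\geq 2$ (or $Y=\{0\}$, corresponding to $I\cup J=\{1,\ldots,n\}$); otherwise a generic leaf of $\mathcal F$ through a smooth point of $Y$ would contain a direction $\partial/\partial x_j$ with $j>\tau$ not tangent to $Y$, contradicting invariance.

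In the affine chart of $\pi$ indexed by $k\in I\cup J$, with $x_k'=x_k$, $x_i'=x_i/x_k$ for $i\in(I\cup J)\setminus\{k\}$, and $x_j'=x_j$ otherwise, a direct substitution yields
\[
\pi^*\omega=\Lambda\,\frac{dx_k'}{x_k'}+\sum_{i\in\{1,\ldots,\tau\}\setminus\{k\}}(\lambda_i+\tilde b_i)\frac{dx_i'}{x_i'},\qquad \Lambda=\sum_{i\in I}(\lambda_i+\tilde b_i),
\]
where $\tilde b_i=b_i\circ\pi$ in this chart. Fix $p\in\pi^{-1}(0)\cap\mbox{Sing}({\mathcal F}',E')$ in this chart; since $\pi(p)=0$ we have $\tilde b_i(p)=0$ for every $i$. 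The analysis splits according to whether $\sum_{i\in I}\lambda_i$ vanishes.

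If $\sum_{i\in I}\lambda_i\neq 0$, then $\Lambda(p)\neq 0$ and the exceptional divisor $D=\{x_k'=0\}$ is an invariant component of $E'$ at $p$ carrying the residue $\sum_I\lambda_i$; the remaining invariant components of $E'$ through $p$ are the strict transforms of those $\{x_i=0\}$ ($i\le\tau$) containing $\pi(p)$. At the origin of the chart all $x_i'(p)=0$ and the displayed formula is already in pre-simple CH-corner form with residues drawn from $\{\lambda_1,\ldots,\lambda_{\tau}\}\cup\{\sum_I\lambda_i\}$, all nonzero. For $p$ with some $x_i'(p)=\alpha_i\neq 0$ the terms $(\lambda_i+\tilde b_i)dx_i'/x_i'$ become regular at $p$; they are absorbed into a coordinate change of the form $y_\ell=x_\ell'U_\ell(x')$ with $U_\ell$ a unit, chosen so that the resulting logarithmic form has nonzero residues precisely along the invariant components of $E'$ through $p$. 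This exhibits the pre-simple CH-corner form at $p$.

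If $\sum_{i\in I}\lambda_i=0$, the principal step is to show that the integrability of $\omega$ forces $\sum_{s\in I}\tilde b_s$ to be divisible by $x_k'$ in the chart, equivalently $(\sum_{s\in I}b_s)|_{x_s=0,\,s\in I}\equiv 0$ as a function of the remaining foliation variables. The argument is to specialise the identity $\omega\wedge d\omega=0$ to the coefficient of $dx_i\wedge dx_j\wedge dx_\ell$ with $i,j\in I$ and $\ell\in\{1,\ldots,\tau\}\setminus I$, restrict to $\{x_s=0:s\in I\}$, and use $\sum_I\lambda_i=0$ to deduce $\partial_\ell(\sum_{s\in I}b_s)|_{x_I=0}=0$ for every such $\ell$; together with $b_s(0)=0$ this gives the vanishing. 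Granting this, $D$ is dicritical at $p$, and the foliation $\pi^*{\mathcal F}$ admits a nonsingular holomorphic vector field $\xi$ tangent to it near $p$ whose image at $p$ spans $\partial/\partial x_k'$; straightening $\xi$ into $\partial/\partial y_1$ by a local analytic change of coordinates realises $\pi^*{\mathcal F}$ at $p$ as the pullback of a foliation of dimensional type one less, which in the coordinates corresponding to the invariant components of $E'$ at $p$ is in pre-simple CH-corner form with residues among the remaining $\lambda_i\neq 0$. The divisibility step using integrability is the main obstacle; the subsequent straightening and identification with a lower-dimensional pre-simple CH-corner is a standard consequence.
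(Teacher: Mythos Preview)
Your overall strategy matches the paper's: pick adapted coordinates, compute $\pi^*\omega$ in each affine chart, split into the nonresonant case $\sum_{i\in I}\lambda_i\neq 0$ and the resonant case $\sum_{i\in I}\lambda_i=0$, and in each case trivialize down to the pre-simple CH-corner normal form. A few remarks are in order.

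First, two small inaccuracies. Your claim that invariance of $Y$ forces $|I|\geq 2$ is false: the paper only needs $|I|\geq 1$ (for instance $Y=\{x_1=x_{\tau+1}=0\}$ is invariant, since $\{x_1=0\}$ is a leaf). This does no harm, because when $|I|=1$ you always fall into the nonresonant case. Also, your displayed formula for $\pi^*\omega$ is correct only in charts with $k\in I$; for $k\in J$ the sum runs over all of $\{1,\dots,\tau\}$ rather than $\{1,\dots,\tau\}\setminus\{k\}$ (this is the paper's Case~(2)). The subsequent analysis is entirely parallel, so this is a presentational gap rather than a mathematical one.

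The genuinely interesting difference is your treatment of the resonant case. The paper argues by contradiction: if $\sum_{s\in I}\tilde b_s$ were \emph{not} divisible by $x_k'$, one can write the obstruction as a nonzero $g(x'_{s+1},\dots,x'_\tau)$ and then build an explicit map $\delta\colon(\mathbb C^2,0)\to(M,p)$ whose pullback is a saddle-node, contradicting the CH-foliation hypothesis. You instead derive divisibility directly from integrability. Your sketch is a bit compressed, but the argument goes through: from the $\eta_i\wedge\eta_j\wedge\eta_\ell$ coefficient of $\omega\wedge d\omega$ with $i,j\in I$, restricted to $\{x_s=0:s\in I\}$, one gets $a_i\partial_\ell a_j=a_j\partial_\ell a_i$ there for every $\ell\notin I$; hence each ratio $a_i/a_j$ is constant on $\{x_I=0\}$, equal to $\lambda_i/\lambda_j$, and then $\sum_{i\in I}a_i=(a_j/\lambda_j)\sum_{i\in I}\lambda_i=0$ on $\{x_I=0\}$. (This is the natural extension to $|I|\geq 2$ of the paper's own Lemma~\ref{lema:integrabilidad}.) So your route avoids invoking the CH hypothesis at this step and shows that the resonant divisibility is a consequence of integrability and the pre-simple CH-corner data alone; the paper's saddle-node construction is in fact ruling out a case that cannot occur. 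What the paper's approach buys is that it never needs to manipulate the integrability identity beyond the setup, trading that for a short transversal-section argument.
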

 \begin{proof}  For simplicity, we assume that $E=E_{\mbox{\small inv}}$, the general case can be done with similar computations. Let $\tau$ be the dimensional type of $\mathcal F$ and choose local coordinates $x_1,x_2,\ldots,x_n$ as in Definition \ref{def:simple}, where $E=(\prod_{i=1}^\tau x_i=0)$ and
 $$
 Y=(x_1=x_2=\cdots=x_s=0, x_{\tau+1}=x_{\tau+2}=\cdots=x_{\tau+t}=0)
 $$
with $1\leq s\leq \tau$, $0\leq t\leq n-\tau$ and $s+t\geq 2$ (note that $s\geq 1$, otherwise $Y$ is not invariant). Up to a reordering of the indices, it is enough to verify the cases with local coordinates $x'_1,x'_2,\ldots,x'_n$ at $p\in \pi^{-1}(0)\cap E'_{\mbox{\small inv}}$ given by (1) or (2) as follows
 \begin{enumerate}
 \item There are scalars $\mu_2,\mu_3,\ldots,\mu_{s},\nu_{\tau+1},\nu_{\tau+2},\ldots,\nu_{t}$ such that
        \begin{enumerate}
    \item $x_1=x'_1$.
    \item $x_i=(x'_i+\mu_i)x'_1$, for $i\in \{2,3,\ldots,s\}$.
    \item $x_i=(x'_i+\nu_i)x'_1$, for $i\in \{\tau+1,\tau+2,\ldots,\tau+t\}$.
    \item $x_i=x'_i$ for $i\notin \{1,2,\ldots,s\}\cup \{\tau+1,\tau+2,\ldots,\tau+t\}$.
      \end{enumerate}
 \item There are scalars $\nu_{\tau+2},\nu_{\tau+3},\ldots,\nu_{\tau+t}$ such that
    \begin{enumerate}
    \item $x_i=x'_ix'_{\tau+1}$, for $i\in \{1,2,\ldots,s\}$.
    \item $x_{\tau+1}=x'_{\tau+1}$.
    \item $x_i=(x'_i+\nu_i)x'_{\tau+1}$, for $i\in \{\tau+2,\tau+3,\ldots,\tau+t\}$.
    \item $x_i=x'_i$ for $i\notin \{1,2,\ldots,s\}\cup \{\tau+1,\tau+2,\ldots,\tau+t\}$.
    \end{enumerate}
 \end{enumerate}
 The second case occurs only if $t\geq 1$.
 Put $r=1$ if we are in the first case and $r=\tau+1$ if we are in the second one. In both cases $\pi^{-1}(Y)$ is locally given at $p$ by $x'_r=0$ and the divisor $E'$ is  given by
 \begin{equation*}
 E'=\left\{\begin{array}{lc}
 (\prod_{i\in \{1\}\cup \{i\in \{2,\ldots,\tau\}; \mu_i=0\}} x_i=0), &\mbox{ first case }\\
  (\prod_{i\in \{1,2,\ldots,\tau,\tau+1\}} x_i=0), &\mbox{ second case }
  \end{array}
 \right.
 \end{equation*}
 We recall that the foliation $\mathcal F$ is locally given at the origin by $\omega$ as in Equation \ref{eq:presimple} of Definition \ref{def:presimplecorner}.

 {\em Case (1)}. The $1$-form $\omega$ is locally given at $p$ as
$$
\omega=  (\tilde \lambda_1+\tilde b_1)\frac{dx'_1}{x'_1}+ \sum_{i\in B}(\lambda_i+\tilde b_i)\frac{dx'_i}{x'_i}
+ \sum_{i\in C}\frac{\lambda_i+\tilde b_i}{\mu_i+x'_i} dx'_i,
$$
where $\tilde \lambda_1=\sum_{i=1}^s \lambda_i$, the germs $\tilde b_j\in {\mathbb C}\{x'_1,x'_2,\ldots,x'_\tau\}$ are in the ideal generated by $x'_1,x'_{s+1},x'_{s+2},\ldots,x'_\tau$ and
$$
B=\{ i\in \{2,3,\ldots,\tau\}; \tilde\mu_i=0\};\quad C=\{i\in \{2,3,\ldots,\tau\}; \mu_i\ne 0\}.
$$
If
$\tilde \lambda_1 \ne 0$, the nonsingular vector fields
$$
\xi_i=  \frac{\lambda_i+\tilde b_i}{\mu_i+x'_i} x'_1\frac{\partial}{\partial x'_1}- (\tilde \lambda_1+\tilde b_1)\frac{\partial }{\partial x'_i};\quad i\in C,
$$
trivialize the foliation and up to an appropriate coordinate change we may assume that ${\mathcal F}'$ is given by a form of the type
$$
\omega=  (\tilde \lambda_1+\tilde b_1)\frac{dx_1}{x_1}+ \sum_{i\in B}(\lambda_i+\tilde b_i)\frac{dx'_i}{x'_i}.
$$
Thus, the point $p$ is  a pre-simple CH-corner with dimensional type $\tau'=\tau-\sharp C$.

Assume now that $\tilde\lambda_1=0$, in particular $s\geq 2$. Then $x'_1$ divides $\tilde b_1$ or not.

If $x'_1$ divides $\tilde b_1$, with $f=\tilde b_1/x'_1$, the component $x'_1=0$ is dicritical and thus we have
$
E'_{\mbox{\small inv}}=(\prod_{i\in B}x_i=0).
$
In particular $B\ne \emptyset$.
Write
$$
\omega= f{dx_1}+ \sum_{i\in B}(\lambda_i+\tilde b_i)\frac{dx'_i}{x'_i}
+ \sum_{i\in C}\frac{\lambda_i+\tilde b_i}{\mu_i+x'_i} dx'_i.
$$
Take an index $j\in B\cup C$ and consider the non singular vector field $\xi$ where
$$
\xi= (\lambda_j+\tilde b_j) \frac{\partial}{\partial x'_1}- fx_j\frac{\partial }{\partial x_j},\mbox{ if } j\in B;\quad
\xi=  \frac{\lambda_j+\tilde b_j}{\tilde\mu_j+x'_j} \frac{\partial}{\partial x'_1}- f\frac{\partial }{\partial x_j},\mbox{ if } j\in C.
$$
Now $\xi$ trivializes the foliation and we can assume that $f$ is identically zero, that is
$$
\omega= \sum_{i\in B}(\lambda_i+\tilde b_i)\frac{dx'_i}{x'_i}
+ \sum_{i\in C}\frac{\lambda_i+\tilde b_i}{\tilde\mu_i+x'_i} dx'_i.
$$
If $B=\emptyset$, since $s\geq 2$ we have that $C\ne \emptyset$ and $\omega$ is non singular and has normal crossings with $E'=(x'_1=0)$ at $p$, then we are done, since $p\notin \mbox{Sing}({\mathcal F}',E')$. If $B\ne \emptyset$, take $i\in B$ and  consider the trivializing nonsingular vector fields
$$
\xi_j=  \frac{\lambda_j+\tilde b_j}{\tilde\mu_j+x'_j} x'_i\frac{\partial}{x'_i}- (\lambda_i+\tilde b_i)\frac{\partial }{x_j};\quad j\in C.
$$
Then, we get that $p$ is a pre-simple CH-corner with dimensional type $\tau'=\tau-1-\sharp C$.

Now, we assume that $\tilde \lambda_1=0$ and $x'_1$ does not divide $\tilde b_1$. We can write $\tilde b_1=x'_1f+g(x'_{s+1},x'_{s+2},\ldots,x'_\tau)$,
 where $g(0)=0$ and $g\ne 0$. Note in particular that $s<\tau$. Let us choose integers
 $
 \alpha_{s+1},\alpha_{s+2},\ldots,\alpha_\tau\in {\mathbb Z}_{\geq 1}
 $
 such that $\psi(v)\ne 0$ where $\psi(v)=g(v^{\alpha_{s+1}},v^{\alpha_{s+2}},\ldots,v^{\alpha_{\tau}})$ and $\lambda\ne 0$ where $\lambda=\sum_{i=s+1}^\tau\alpha_i\lambda_i$.
Let us consider the map $\delta:({\mathbb C}^2,0)\rightarrow (M,p)$ given by $x'_1=u$, $ x'_j=v^{\alpha_j}, j=s+1,s+2,\ldots,\tau$ and $x'_j=0$ otherwise. We have
$$
\delta^*\omega=(u\phi(u,v)+\psi(v))\frac{du}{u}+(\lambda+\rho(u,v))\frac{dv}{v};\quad \psi(0)=0=\rho(0).
$$
This is a saddle node in contradiction with the hypothesis that $\mathcal F$ is a CH-foliation.

{\em Case (2)}. We have
$$
\omega=\sum_{i=1}^\tau (\lambda_i+\tilde b_{i})\frac{dx'_i}{x'_i}  +(\tilde \lambda_{\tau+1}+\tilde b_{\tau+1})\frac{dx_{\tau+1}}{x_{\tau+1}}; \; \tilde \lambda_{\tau+1}=\sum_{i=1}^s \lambda_i,
$$
where the $\tilde b_i$ are in the ideal generated by $x'_{\tau+1}$ and $x'_j$, $j\in \{s+1,s+2,\ldots,\tau\}$. If $\tilde\lambda_{\tau+1}\ne 0$, we see that $p$ is a pre-simple CH-corner for ${\mathcal F}', E'$ of dimensional type $\tau'=\tau+1$. Assume that $\tilde\lambda_{\tau+1}=0$.  As before, if $x'_{\tau+1}$ does not divide $\tilde b_{\tau+1}$ we find a contradiction with the fact that ${\mathcal F}$ is a $CH$ foliation. If $x'_{\tau+1}$ divides $\tilde b_{\tau+1}$, with $\tilde b_{\tau+1}=x'_{\tau+1}f$, the created exceptional divisor $x'_{\tau+1}=0$ is dicritical, the foliation is given by
$$
\omega=\sum_{i=1}^\tau (\lambda_i+\tilde b_{i})\frac{dx'_i}{x'_i}  +f{dx'_{\tau+1}}
$$
and we can trivialize it to find a pre-simple CH-corner with $\tau'=\tau$.
 \end{proof}
\begin{lemma}
\label{lema: CHcornersprojective}
Let $\mathcal G$ be a codimension one singular foliation in the projective space ${\mathbb P}^n_{\mathbb C}$ and consider the normal crossings divisor $D\subset {\mathbb P}^n_{\mathbb C}$ given in homogeneous coordinates by
$
D=(\prod_{i=0}^eX_i=0)
$.
Assume that all the components of $D$ are invariant by $\mathcal G$ and any point in $\mbox{Sing}({\mathcal G}, D)$ is a pre-simple CH-corner for $\mathcal G$, $D$. Let $\mathcal G$ be defined by a non-null homogeneous integrable 1-form
$$
W=\sum_{i=0}^nA_i(X_0,X_1,\ldots,X_n)\frac{dX_i}{X_i};\quad \sum_{i=0}^nA_i=0,
$$
where the (non-null) coefficients $A_i$ are homogeneous polynomials of degree $r$ without common factor. Then $r=0$ and
$
A_{e+1}=A_{e+2}=\cdots=A_n=0
$. More precisely, $W$ has the form
$
W=\sum_{i=0}^e\lambda_i{dX_i}/{X_i}$ with $\sum_{i=0}^e\lambda_i=0$ and $\prod_{i=0}^e\lambda_i\ne 0$.\end{lemma}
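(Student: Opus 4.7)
The plan is to extract algebraic constraints on the $A_i$ from the two hypotheses in sequence: first, the invariance of each component of $D$ forces a divisibility property on the coefficients $A_i$ with $i\le e$; second, the pre-simple CH-corner condition forbids certain ``trace-like'' singularities, which pins down both the degree $r$ and the vanishing $A_j=0$ for $j>e$.

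\emph{Step 1: for each $i\le e$, $X_i\nmid A_i$.} Suppose otherwise, $A_i=X_i\widetilde A_i$. Near a generic point $p\in\{X_i=0\}$ off the other components of $D$, the $i$-th summand becomes the holomorphic $\widetilde A_i\,dX_i$, so locally $\mathcal G$ is represented by the holomorphic $1$-form
\[
W' \;=\; \widetilde A_i\,dX_i + \sum_{l\ne i} (A_l/X_l)\,dX_l.
\]
Invariance of $\{X_i=0\}$ at $p$ is equivalent to $W'(p)\in \mathbb C\cdot dX_i$, i.e.\ $A_l(p)=0$ for every $l\ne i$. Because the $A_l$ have no common factor, $X_i$ cannot divide all of them, so some $A_l|_{X_i=0}$ is non-zero; for generic $p$ this yields $A_l(p)\ne 0$, violating invariance. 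The contradiction shows $X_i\nmid A_i$, so $A_i|_{X_i=0}$ is a non-zero homogeneous polynomial of degree $r$.

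\emph{Step 2: $r=0$.} Suppose $r\ge 1$. Pick any $i\le e$; $A_i|_{X_i=0}$ vanishes on a proper hypersurface inside $\{X_i=0\}$, so there is a generic $p^*$ on it, away from every other component of $D$. In any chart $X_k=1$ with $X_k(p^*)\ne 0$, write $W=\sum_{l\ne k} a_l\,dy_l/y_l$ with $a_l=A_l|_{X_k=1}$; only $y_i$ vanishes at $p^*$, and multiplying by $y_i$ produces the holomorphic form
\[
y_iW \;=\; a_i\,dy_i + y_i\!\sum_{l\ne i,k}(a_l/y_l)\,dy_l.
\]
Genericity of $p^*$ forces $a_i|_{y_i=0}$ to have an ordinary zero at $p^*$, so $y_i\nmid a_i$ as germs; thus $y_iW$ is reduced at $p^*$ and vanishes there. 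Hence $p^*\in\mbox{Sing}(\mathcal G)\subset\mbox{Sing}(\mathcal G,D)$. But only one $D$-component passes through $p^*$, so if $p^*$ were a pre-simple CH-corner it would have $\tau=1$, and the form $\omega=(\lambda_1+b_1(z_1))\,dz_1/z_1$ with $\lambda_1\ne 0$ defines a non-singular foliation at the origin (clear the pole: $z_1\omega|_{z_1=0}=\lambda_1\,dz_1\ne 0$). This contradiction forces $r=0$.

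\emph{Step 3: $\lambda_j=0$ for $j>e$.} With $r=0$ the $A_i$ are constants $\lambda_i$, and Step 1 gives $\lambda_i\ne 0$ for $i\le e$. Suppose $\lambda_j\ne 0$ for some $j>e$ and fix any $i\le e$. Pick a generic $p\in\{X_i=0\}\cap\{X_j=0\}$ missing every other component of $D$. In a chart $X_k=1$ with $k\ne i,j$, the form
\[
y_iy_jW \;=\; \lambda_i y_j\,dy_i + \lambda_j y_i\,dy_j + y_iy_j\!\sum_{l\ne i,j,k}(\lambda_l/y_l)\,dy_l
\]
is reduced (no non-trivial factor divides both $\lambda_iy_j$ and $\lambda_jy_i$ while $\lambda_i\lambda_j\ne 0$) and vanishes at $p$, so $p\in\mbox{Sing}(\mathcal G,D)$. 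Only $\{X_i=0\}$ is a $D$-component through $p$, and the $\tau=1$ argument of Step 2 again yields a contradiction. Hence $\lambda_j=0$ for all $j>e$, and $\sum A_i=0$ reduces to $\sum_{i=0}^e \lambda_i=0$.

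The main difficulty is checking, in Steps 2 and 3, that the holomorphic form obtained after clearing poles ($y_iW$, resp.\ $y_iy_jW$) is genuinely reduced at the chosen point, so that its vanishing really encodes a singularity of $\mathcal G$ rather than an artefact of an uncanceled common factor; the genericity of $p^*$ (resp.\ $p$) is precisely what guarantees this. Once that is granted, the impossibility of a pre-simple CH-corner with $\tau=1$ at a singular point is immediate from Definition~\ref{def:presimplecorner}.
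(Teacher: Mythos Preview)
Your proof is correct and follows the same underlying mechanism as the paper's (locate a point of $\mathrm{Sing}(\mathcal G,D)$ lying on too few components of $D$ to be a pre-simple CH-corner), but the organisation is genuinely different. The paper first shows $X_i\mid A_i$ for $i>e$ (otherwise $\{X_i=0\}$ would be invariant and any point of $\{X_0=X_{e+1}=0\}$ off the other components of $D$ would be singular with only one $D$-component through it), writes $A_i=X_iB_i$, then deduces $r=0$ from the nonemptiness of the global common zero locus $Z=\{A_1=\cdots=A_n=0\}$ in $\mathbb P^n$, and finally reads off $A_i=0$ for $i>e$ from the degree of $B_i$. Your route is complementary: you begin with the non-divisibility $X_i\nmid A_i$ for $i\le e$ extracted directly from invariance, then pin down $r=0$ via a bad point on $\{X_i=0\}\cap\{A_i=0\}$, and only afterwards kill the $\lambda_j$ for $j>e$ via a bad point on $\{X_i=0\}\cap\{X_j=0\}$. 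Your argument is slightly more hands-on and avoids invoking the nonemptiness of an $n$-fold intersection in $\mathbb P^n$; the paper's is terser once that intersection is granted.

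One point in Step~2 deserves a sharper justification. The claim that $y_iW$ is reduced at $p^*$ does not follow from ``genericity of $p^*$'' or from $y_i\nmid a_i$ alone; the actual reason is the hypothesis that the $A_l$ have no common factor. Concretely, any common germ factor $g$ of the coefficients of $y_iW$ is either divisible by $y_i$ (forcing $y_i\mid a_i$, ruled out by Step~1) or coprime to $y_i$, in which case $g\mid a_l$ for every $l\ne k$ and hence also $g\mid a_k=-\sum_{l\ne k}a_l$; but then the germ of hypersurface $V(g)$ would sit inside the codimension~$\ge 2$ locus $\{A_0=\cdots=A_n=0\}$, a contradiction. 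Once this is spelled out, the proof is complete.
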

\begin{proof} It is known that $\mbox{Sing}{\mathcal G}$ is a nonempty subset of ${\mathbb P}^n_{\mathbb C}$. Thus $D\ne\emptyset$ and more precisely $e\geq 2$, otherwise we find singular points that are not pre-simple CH-corners.  Let us show now that $X_i$ divides $A_i$ for each $i=e+1,e+2,\ldots,n$. If $X_{e+1}$ does not divide $A_{e+1}$, the hyperplane $X_{e+1}=0$ is invariant for $\mathcal F$; since $X_0=0$ is also invariant, any point
$$
P\in (X_0=X_{e+1}=0)\setminus (\prod_{i=1}^eX_i=0)
$$
is a singular point $P\in \mbox{Sing}{\mathcal G}$ that cannot be a pre-simple CH-corner for $\mathcal G$, $D$. Thus we have
$A_i=X_iB_i$, for $ i=e+1,e+2,\ldots,n$,
where either $A_i=0$ or $B_i$ is a homogeneous polynomial of degree $r-1$.

If $r\geq 1$,  the set
$
Z=(A_0=A_1=\cdots=A_n=0)=(A_1=A_2=\cdots=A_n=0)
$
is a non-empty Zariski-closed subset of ${\mathbb P}^n_{\mathbb C}$ and no point $P\in Z$ can be a pre-simple CH-corner. Thus $r=0$. This implies that the degree of $B_i$ is equal to $-1$ for $i=e+1,e+2,\ldots,n$, that is $A_{e+1}=A_{e+2}=\cdots=A_n=0$ and moreover $A_j=\lambda_j\in {\mathbb C}$ for $j=0,1,\ldots,e$. We have that $\lambda_j\ne 0$ for each $j=0,1,\ldots,e$ because $X_j=0$ is invariant by $\mathcal G$.
\end{proof}
 \begin{proposition}
 \label{pro:notchcorners}
 If all $p\in \pi^{-1}(0)\cap\mbox{Sing}({\mathcal F}',E')$ are pre-simple CH-corners for ${\mathcal F}', E'$, the origin is  also a pre-simple CH-corner for ${\mathcal F}, E$.
 \end{proposition}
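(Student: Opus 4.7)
The plan is to argue via the restriction of $\mathcal{F}'$ to the projective exceptional divisor created by blowing up $Y$ and then invoke Lemma \ref{lema: CHcornersprojective}. Up to reordering, I choose coordinates $x_1,\ldots,x_n$ at the origin adapted to $E$ so that $E_{\mbox{\small inv}}=(\prod_{i=1}^e x_i=0)$ and
$$Y=(x_1=\cdots=x_s=x_{e+1}=\cdots=x_{e+t}=0),\quad 1\leq s\leq e,\ 0\leq t\leq n-e,\ s+t\geq 2,$$
using the normal crossings condition and the fact that $Y\subset E_{\mbox{\small inv}}$ forces $s\geq 1$ (for a dicritical component appearing in $Y$ the computation is analogous). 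Write $c=s+t=\mathrm{codim}(Y)$. The fiber $D=\pi^{-1}(0)\cap \pi^{-1}(Y)\cong {\mathbb P}^{c-1}$ carries the normal crossings divisor $\Delta=E'|_D$ whose irreducible components are the traces in $D$ of the strict transforms of $(x_i=0)$ for $i\in\{1,\ldots,s\}$ together with the invariant components among the $x_{e+j}$'s. By hypothesis and by the already-proved Proposition \ref{pro:chcornerstability}, every point of $\mathrm{Sing}({\mathcal F}',E')\cap D$ is a pre-simple CH-corner; in particular all components of $\Delta$ are invariant for ${\mathcal F}'$ (otherwise we would find a singular non-corner on a dicritical component of $E'$ meeting $D$).

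Second, I consider the restricted foliation ${\mathcal G}={\mathcal F}'|_D$ on ${\mathbb P}^{c-1}$, which is a codimension one projective foliation leaving $\Delta$ invariant. Restricting the local logarithmic form at any pre-simple CH-corner of ${\mathcal F}',E'$ lying on $D$ to a transversal to $D$ shows that the corresponding singularity of $(\mathcal G,\Delta)$ is again a pre-simple CH-corner. Lemma \ref{lema: CHcornersprojective} therefore applies and yields that $\mathcal G$ is given, in homogeneous coordinates $X_0,\ldots,X_{c-1}$ cutting out the components of $\Delta$, by
$$W=\sum_{i=0}^{c-1}\lambda_i \frac{dX_i}{X_i},\quad \sum_{i=0}^{c-1}\lambda_i=0,\quad \prod_{i=0}^{c-1}\lambda_i\neq 0.$$
In particular $\mathcal G$ has dimensional type exactly $c-1$ and every component of $\Delta$ is invariant with a non-zero residue.

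Third, I lift this rigid projective form back to $\omega$. Writing $\omega=\sum_j A_j(x)\,dx_j$ and blowing up, one computes $\pi^*\omega$ in the standard charts and restricts to the exceptional divisor $x_1'=0$ (first type chart) or $x_{\tau+1}'=0$ (second type chart) as in the proof of Proposition \ref{pro:chcornerstability}. The conclusion of the previous paragraph forces the restriction $\pi^*\omega|_D$ to have no polynomial part and to be a logarithmic $1$-form in the $X_i$ with constant residues $\lambda_i\neq 0$. Pulling back through each standard chart, this is only compatible with $\omega$ having an expansion of the form
$$\omega=\sum_{i=1}^{\tau} (\lambda_i+b_i(x_1,\ldots,x_\tau))\frac{dx_i}{x_i},\quad b_i(0)=0,\quad \prod_{i=1}^\tau \lambda_i\neq 0,$$
with $\{x_1=0\},\ldots,\{x_\tau=0\}\subset E_{\mbox{\small inv}}$ (the vanishing of a residue $\lambda_i$ at the origin would propagate to the restricted projective foliation and contradict Lemma \ref{lema: CHcornersprojective}; the non-appearance of non-log terms $A_j dx_j$ with $j>\tau$ follows from the trivializing non-singular vector fields produced in the proof of Proposition \ref{pro:chcornerstability} run in reverse). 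Hence the origin is a pre-simple CH-corner for ${\mathcal F},E$.

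The main obstacle is the final descent step: one must carefully rule out the possibility that $\omega$ at the origin has a non-pre-simple expression (e.g.\ a ``nilpotent'' part with residues equal to zero, or a mixed term $A_j dx_j$ with $j\notin\{1,\ldots,\tau\}$ of too-low order) which after blow-up would disappear from the restriction to $D$ while still producing a singularity on $D$. This is controlled by choosing test curves $\delta:({\mathbb C}^2,0)\to M$ through the candidate bad point, just as in the saddle-node obstruction in case (1) with $\tilde\lambda_1=0$ of the proof of Proposition \ref{pro:chcornerstability}: any such defect would force $\pi^*\omega$ to have a saddle-node along some two-dimensional transversal, contradicting either the assumption that every $p\in\pi^{-1}(0)\cap\mathrm{Sing}({\mathcal F}',E')$ is a pre-simple CH-corner or the fact that these points are CH.
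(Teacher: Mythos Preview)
Your overall strategy---restrict to the exceptional $D\cong\mathbb P^{c-1}$, invoke Lemma~\ref{lema: CHcornersprojective}, then descend---matches the paper's approach in one subcase only (dicritical blow-up with $A\subset B$, in the paper's notation). The paper in fact distinguishes the non-dicritical blow-up, where a direct order argument shows $r=0$ because the residue $f'(p')$ along $\pi^{-1}(Y)$ must be nonzero at every corner, and the dicritical blow-up with $A\not\subset B$, where one picks $j\in A\setminus B$ and reads off $r=0$ from $a'_j(p')\neq 0$. You do not make these case distinctions, and your appeal to Lemma~\ref{lema: CHcornersprojective} in the non-dicritical situation is not set up: when $D$ is invariant, the object $\mathcal F'|_D$ is not obtained by transversal restriction, and you have not checked that the hypotheses of the lemma are met for the induced foliation.

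The more serious gap is your descent step. Knowing that $\mathcal G$ on $D$ is logarithmic with constant nonzero residues does \emph{not} by itself force $\omega$ at the origin to have the pre-simple CH-corner form. The paper closes this gap in three separate moves: first it proves that some coefficient $a_s$, $s\in A$, is a unit (this is exactly the $r=0$ statement extracted from the blow-up analysis); then it uses the explicit trivializing vector fields $\xi_j=b_j x_s\,\partial/\partial x_s-\partial/\partial x_j$ to kill all $b_j\,dx_j$ terms with $j\notin A$; finally it blows up the axis $x_s=x_j=0$ for each remaining $j\in A$ and uses the CH hypothesis to exclude $a_j(0)=0$ via a saddle-node contradiction. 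Your parentheticals (``run Proposition~\ref{pro:chcornerstability} in reverse'', ``choose test curves $\delta$'') gesture at the last two moves but do not carry them out, and you never establish the first one (the existence of a unit coefficient), which is the step that actually requires the case split above. As written, the proposal is an outline with the key computations missing rather than a proof.
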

 \begin{proof} Choose local coordinates $x_1,x_2,\ldots,x_n$  and subsets $A,B,C\subset \{1,2,\ldots,n\}$ such that $$E_{\mbox{\small inv}}=(\prod_{i\in A}x_i=0);\; Y=(x_i=0;i\in B);\; E_{\mbox{\small dic}}= (\prod_{i\in C}x_i=0).$$
 Then $\mathcal F$ is given by a meromorphic  1-form
    $
    \omega=\sum_{i\in A}a_i{dx_i}/{x_i}+\sum_{i\notin A}b_idx_i
    $
    where the coefficients do not have a common factor and  $x_i$ does not divide $a_i$, for $i\in A$. Let us put $a_i=x_ib_i$ for $i\notin A$ and $\omega=\sum_{i=1}^na_i{dx_i}/{x_i}$. We first show that at least one of the coefficients $a_i$, $i\in A$ is a unit. Let $r$ be the generic order $r=\nu_{Y}(a_i;i=1,2,\ldots,n)$ of the coefficients $a_i$ along $Y$. Let us write $a_i= A_{i,r}+A_{i,r+1}+\cdots$, the decomposition of $a_i$ in homogeneous components with respect to the variables $x_i,i\in B$, where one of the $A_{i,r}$  is not identically zero. We also have that
    $$\overline{a}_i=a_i\vert_{(x_i=0; i\notin B)}= \overline{A}_{i,r}+\overline{A}_{i,r+1}+\cdots$$
    is the decomposition in homogeneous components of $\overline{a}_i\in{\mathbb C}\{x_i;i\in B\}$. It is enough to show that $r=0$ and there is an index $i$ such that $ \overline{A}_{i,0}\ne 0$ (note that $i\in A$).

    Let us denote $f=\sum_{i\in B}a_i$. We decompose $f=F_r+F_{r+1}+\cdots$ as before.
    Consider an affine chart of the blow-up by taking $i_0\in B$ and coordinates $x'_i=x_i$ for $i\notin B\setminus \{i_0\}$, $x'_i=x_i/x_{i_0}$ for $i\in B\setminus \{i_0\}$. The transformed foliation in this chart is given by the 1-form
    $$
    \omega'=\frac{1}{x^r_{i_0}}\left(f\frac{dx_{i_0}}{x_{i_0}}+\sum_{i\ne i_0}a_i\frac{dx'_i}{x'_i}\right)=
    f'\frac{dx_{i_0}}{x_{i_0}}+\sum_{i\ne i_0}a'_i\frac{dx'_i}{x'_i},
    $$
where $f'=f/{x^r_{i_0}}$ and $a'_i=a_i/{x^r_{i_0}}$ for $i\ne i_0$. We
note that the blow-up is dicritical if and only if $x_{i_0}$ divides $f'$, this is equivalent to saying that $F_r=0$.

Let us consider points $p'\in \pi^{-1}(0)$ belonging to the selected affine chart; that is
$$
x'_i(p')=0;i\notin B\setminus\{i_0\},\; x'_i(p')=\mu_i; i\in B\setminus \{i_0\}.
$$

  {\em First case: the blow-up is non dicritical}.
Since $p'$ is a pre-simple CH-corner, we have that $f'(p')\ne 0$. Repeating this argument for other affine charts, we deduce that $r=0$ and $\overline F_0\ne 0$. This implies that some $\overline A_{i,0}\ne 0$ and hence $a_i$ is a unit.

{\em Second case: the blow-up is dicritical}. We have two possibilities, either $A\subset B$ or there is $j\in A\setminus B$. In the second case, we have $a'_j(p')\ne 0$ for all $p'\in \pi^{-1}(0)$, this implies that $r=0$ and $\overline{A}_{j,0}\ne 0$ and hence $a_j$ is a unit. Thus we suppose that $A\subset B$. By the hypothesis, there is some $i\ne i_0$ such that $a'_{i}(p')\ne 0$, but if $i\notin A$ we have $a'_i(p')=0$. In a more precise way, the restriction $
\omega\vert_{\pi^{-1}(0)}
$ is given in homogeneous coordinates by
$$
\omega\vert_{\pi^{-1}(0)}=\sum_{i\in A}\overline{A}_{i,r}\frac{dx'_i}{x'_i}
$$
and all the points in $\pi^{-1}(0)$ are pre-simple CH-corners for ${\mathcal F}\vert_{\pi^{-1}(0)}$, $E'_{\mbox{\small inv}}\cap \pi^{-1}(0)$. Now, we apply Lemma \ref{lema: CHcornersprojective} to deduce that $r=0$ and $\overline{A}_{i,0}\ne 0$ for all $i\in A$. In this case we deduce already that the origin is a pre-simple CH-corner.

Now, let us end the proof. We know that there is an index $s\in A$ such that $a_s$ is a unit. Up to dividing by this unit, we can write
$$
\omega=\frac{dx_s}{x_s}+\sum_{i\in A\setminus \{s\}}a_i\frac{dx_i}{x_i}+\sum_{j\notin A}b_jdx_j.
$$
We can trivialize the foliation by the tangent vector fields $\xi_j=b_jx_s\partial/\partial x_s-\partial/\partial x_j$ for $j\notin A$. This allows us to suppose that $b_j=0$ for all $j\notin A$. The integrability condition also gives in this situation that $a_i\in {\mathbb C}\{x_j; j\in A\}$. Now, it is enough to prove that $a_j(0)\ne 0$ for all $j\in A\setminus \{s\}$. Assume that $a_j(0)=0$, we blow-up the axis $x_s=x_j=0$ and we find a saddle node, contradiction with the fact that $\mathcal F$ is a CH-foliation.
\end{proof}

\section{Compact dicritical components and partial separatrices}
\label{sec:compactdicriticalcomponents}
In this section we extend to the dicritical case some features of the
 argument in \cite{Can-C} to find invariant germs of surfaces for a germ of foliation $\mathcal F$ in $({\mathbb C}^3,0)$. We are focusing the case of CH-foliations, although the results are of a wider scope.  For other extensions of this argument, the reader may see \cite{Reb-R}.

 Let us consider a germ of CH-foliation $\mathcal F$ in $({\mathbb C}^3,0)$ and  a reduction of singularities
 \begin{equation}
 \label{eq.redsing}
 \pi: (M,\pi^{-1}(0))\rightarrow ({\mathbb C}^3,0)
 \end{equation}
 of $\mathcal F$ which is a composition of blow-ups with invariant nonsingular centers, where the exceptional divisor $E$ has normal crossings and each point  $p\in \pi^{-1}(0)$ is a CH-simple point for $\pi^*{\mathcal F}$ adapted to $E$. The existence of such a reduction of singularities is guaranteed by the main result in \cite{Can}. Note that since $\mathcal F$ may be a dicritical foliation, the morphism $\pi$ is not necessarily obtained from any reduction of singularities of the invariant surfaces as in \cite{Moz-F};  it is even possible that there are no invariant surfaces.

The fiber $\pi^{-1}(0)$ is a connected closed analytic subset of $M$ whose irreducible components have dimension two or dimension one. The irreducible components of dimension two of the fiber coincide with the compact irreducible components of the exceptional divisor $E$. Each irreducible component $\Gamma$ of dimension one of $\pi^{-1}(0)$ is contained in at least one non-compact irreducible component of $E$ and never contained in a compact irreducible component of $E$ (otherwise it coincides with it).

Let us briefly recall the argument of construction of invariant surfaces in \cite{Can-C}. Take a point
$p\in \mbox{Sing}(\pi^*{\mathcal F},E)$; it can be a simple CH-corner or a simple CH-trace point and the dimensional type $\tau$ is $2$ or $3$.
Assume that it is a simple CH-trace point. Then there is a unique germ of invariant surface $(S_p,p)$ in $p$ different from the invariant components of $E$ through $p$. Moreover $(S_p,p)$ has normal crossings with $E$. In the case $\tau=2$, the adapted singular locus $\mbox{Sing}(\pi^*{\mathcal F},E)$ is a nonsingular curve contained in the unique invariant component $E_j$ of $E$ through $p$. More precisely  $$\mbox{Sing}(\pi^*{\mathcal F},E)=S_p\cap E_j$$ locally at $p$ and the foliation is analytically equivalent to the germ of $\pi^*{\mathcal F}$
 at $p$ in the points of $S_p\cap E_j$ close to $p$.
 In the case $\tau=3$, there are exactly two invariant components $E_i,E_j$ of $E$ through $p$ and the two lines $
S_p\cap E_i
$ and $
S_p\cap E_j
$
correspond locally at $p$ to the singular simple CH-trace points; all of them, except $p$ itself, are of dimensional type two. Thus, the
set
\begin{equation}
\label{eq:straza}
\mbox{STr}(\pi^*{\mathcal F},E)=\{p\in \mbox{Sing}(\pi^*{\mathcal F},E);  p \mbox{ is a simple CH-trace point } \}.
\end{equation}
is the union of curves of $\mbox{Sing}(\pi^*{\mathcal F},E)$ that are generically contained in only one irreducible component of $E$. We call these curves the {\em s-trace curves}, to be generalized in Section 8.

Let us note that $\mbox{STr}(\pi^*{\mathcal F},E)$ defines a germ of analytic set along $\mbox{STr}(\pi^*{\mathcal F},E)\cap \pi^{-1}(0)$,  hence the irreducible components of $\mbox{Sing}(\pi^*{\mathcal F},E)$ are either compact curves contained in $\pi^{-1}(0)$ or germs of curves.

Let $C$ be a connected component of $\mbox{STr}(\pi^*{\mathcal F},E)$; then $C\cap \pi^{-1}(0)$ is also connected, and it is either reduced to one point or it is a finite union of compact curves.
The germs of invariant surface $S_p$, for $p\in C$, can be glued together (see \cite{Can-C}) to obtain a unique invariant surface $S_C$ that is a germ along $C\cap \pi^{-1}(0)$. An important remark is that the immersion
$$
(S_C, C\cap \pi^{-1}(0))\subset (M_N,\pi^{-1}(0))
$$
is a closed immersion (that is $S_C\cap \pi^{-1}(0)=C\cap \pi^{-1}(0)$) if and only if $C$ does not intersect any compact dicritical component of $E$. If this is the case, we can produce an invariant surface $\pi(S_C)$ for ${\mathcal F}$ by properness of the morphism $\pi$. This is the main argument in \cite{Can-C}.

We can extend the same type of construction by considering also non singular trace points as follows. Define the set $\mbox{Inv}(\pi^{-1}(0))$ to be the union of the irreducible components of $\pi^{-1}(0)$ that are invariant by $\pi^*{\mathcal F}$ and consider the closed analytic set
\begin{equation*}
\label{eq:itraza}
\mbox{ITr}(\pi^*{\mathcal F},E)=\{p\in \mbox{Inv}(\pi^{-1}(0));  p \mbox{ is a simple CH-trace point for } \pi^*{\mathcal F},E\}.
\end{equation*}
(Compare with Equation \ref{eq:straza}). The irreducible components of $\mbox{ITr}(\pi^*{\mathcal F},E)$ are points or compact curves contained in the fiber, but not necessarily contained in the adapted singular locus.
\begin{lemma}Given a point $p\in \mbox{ITr}(\pi^*{\mathcal F},E)$ there is exactly one germ of invariant surface $S_p$ at $p$ not included in  $E$ and moreover $S_p$ has normal crossings with $E$.
\end{lemma}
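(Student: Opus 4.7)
The plan is to read the conclusion directly off the local normal form of a simple CH-trace point. By Definition \ref{def:simple} and the trace condition, one can choose local coordinates $x_1,\ldots,x_n$ at $p$ in which $\pi^*\mathcal{F}$ is given by
$$\omega = \sum_{i=1}^{\tau} (\lambda_i + b_i(x_1,\ldots,x_\tau)) \frac{dx_i}{x_i}, \quad b_i(0)=0,$$
with non-resonant residual vector $\underline\lambda$, $E_{\mbox{\small inv}}=(\prod_{i=1}^{\tau-1} x_i=0)$ and $E_{\mbox{\small dic}}\subset(\prod_{i=\tau+1}^n x_i=0)$. The natural candidate for $S_p$ is the coordinate hyperplane $\{x_\tau=0\}$.

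For existence, I would verify invariance by wedging the associated holomorphic form $\tilde\omega=(\prod_{i=1}^\tau x_i)\,\omega$ with $dx_\tau$: the $i=\tau$ term vanishes, and for $i<\tau$ each coefficient $\prod_{j\le\tau,\,j\ne i} x_j$ contains $x_\tau$ as a factor. Hence $\tilde\omega\wedge dx_\tau$ is divisible by $x_\tau$, so $S_p=\{x_\tau=0\}$ is invariant. It is not contained in $E$ since the index $\tau$ appears in neither $E_{\mbox{\small inv}}$ nor $E_{\mbox{\small dic}}$, and $S_p\cup E$ has normal crossings at $p$ because both are unions of coordinate hyperplanes in the chosen chart.

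For uniqueness, when $\tau\ge 2$ I would invoke Remark \ref{rek:singularlocus}, according to which the only invariant hypersurface germs of $\pi^*\mathcal{F}$ at $p$ are the coordinate hyperplanes $\{x_i=0\}$ for $i=1,\ldots,\tau$; of these, $\{x_1=0\},\ldots,\{x_{\tau-1}=0\}$ are exactly the components of $E_{\mbox{\small inv}}$ through $p$, and an invariant surface cannot coincide with a dicritical (hence non-invariant) component of $E$, so $S_p=\{x_\tau=0\}$ is the unique invariant germ of surface at $p$ not contained in $E$. When $\tau=1$ the point $p$ is regular for $\pi^*\mathcal{F}$ and $E_{\mbox{\small inv}}$ is empty at $p$, so the leaf through $p$, locally equal to $\{x_1=0\}$, is the unique invariant hypersurface germ and trivially does not lie in $E$. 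The only real subtlety is this separate treatment of the $\tau=1$ case, since Remark \ref{rek:singularlocus} is stated for singular simple CH-points; otherwise the proof is a direct unpacking of Definition \ref{def:simple}.
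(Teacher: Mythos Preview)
Your proof is correct and follows essentially the same approach as the paper: both split into the singular case (where the normal form of Definition~\ref{def:simple} and Remark~\ref{rek:singularlocus} give $S_p=\{x_\tau=0\}$) and the nonsingular case $\tau=1$ (where $S_p$ is the leaf through $p$). The paper's proof is two sentences that simply point back to the earlier discussion of simple CH-trace points, whereas you unpack the normal form explicitly and verify invariance, normal crossings, and uniqueness in detail; the content is the same.
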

\begin{proof}
If $p$ is a singular point we have seen this property before. If $p$ is a simple CH-trace point not in $\mbox{Sing}(\pi^*{\mathcal F},E)$,  there are no invariant components of $E$ at $p$ and $S_p$ is the only leaf through $p$.
\end{proof}

Take a connected component $\tilde C$ of  $\mbox{ITr}(\pi^*{\mathcal F},E)$. We can glue together the invariant surfaces $S_p$ to obtain a unique germ $(S_{\tilde C}, \tilde C)$ of invariant surface. Exactly as before, the immersion
$$
(S_{\tilde C}, \tilde C)\subset (M_N,\pi^{-1}(0))
$$
is a closed immersion if and only if $\tilde C$ does not intersect any compact dicritical component of the exceptional divisor $E$.

The above two constructions are related as follows. Given a connected component $C$ of $\mbox{STr}(\pi^*{\mathcal F},E)$, we have that
$
C\cap \mbox{ITr}(\pi^*{\mathcal F},E)
$
is nonempty and connected. In particular, the germ $(S_{C},C\cap\pi^{-1}(0))$ is contained in the germ $(S_{\tilde C}, \tilde C)$ where $\tilde C$ is the connected component of $\mbox{ITr}(\pi^*{\mathcal F},E)$ that contains $
C\cap \mbox{ITr}(\pi^*{\mathcal F},E)
$.
The inclusion of germs of surfaces
$
(S_{C},C\cap\pi^{-1}(0))\subset (S_{\tilde C}, \tilde C)
$
is not necessarily a closed immersion. Moreover, due to the possible existence of curves
 $\Gamma\subset \mbox{ITr}(\pi^*{\mathcal F},E)$
 whose points are nonsingular for $\pi^*{\mathcal F}$, it is possible to have two connected components $C_1$ and $C_2$ of $\mbox{STr}(\pi^*{\mathcal F},E)$ such that $\tilde C$ is a common connected component of $\mbox{ITr}(\pi^*{\mathcal F},E)$ that contains $
C_1\cap \mbox{ITr}(\pi^*{\mathcal F},E)
$ and $
C_2\cap \mbox{ITr}(\pi^*{\mathcal F},E)
$. Hence we can have two non closed inclusions of germs
$$
(S_{C_1},C_1\cap\pi^{-1}(0))\subset (\tilde S, \tilde C)\supset (S_{C_2},C_2\cap\pi^{-1}(0)).
$$
\begin{definition}
Given a connected component $\tilde C$ of  $\mbox{\rm ITr}(\pi^*{\mathcal F},E)$, {\em the partial separatrix over $\tilde C$} is 
the germ of invariant surface $( S_{\tilde C},\tilde C)$.
\end{definition}
 Now, let us give some results for the case that $\mathcal F$ has no germ of invariant surface.

 \begin{proposition}
\label{pro:invariantfiber}
Assume that $\mathcal F$ has no germ of invariant surface. Then we have
\begin{enumerate}
\item Any one dimensional irreducible component  of $\pi^{-1}(0)$ is invariant by $\pi^*{\mathcal F}$.
\item Any connected component $\tilde C$ of $\mbox{\rm ITr}(\pi^*{\mathcal F},E)$ intersects at least one compact dicritical component of $E$.
\item Any connected component $C$ of $\mbox{\rm STr}(\pi^*{\mathcal F},E)$ intersects at least one compact dicritical component of $E$.
\item There is at least one compact dicritical component of $E$.
\end{enumerate}
\end{proposition}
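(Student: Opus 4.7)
The plan is to exploit the single observation that, since $\mathcal F$ has no invariant germ of surface, the image $\pi(D)$ of any non-compact invariant irreducible component $D\subset E$ would be a germ of analytic surface at $0\in{\mathbb C}^3$ invariant by $\mathcal F$ (the image is non-trivial precisely because $D\not\subset\pi^{-1}(0)$). Hence every non-compact irreducible component of $E$ must be dicritical; call this observation ($\star$).

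Given ($\star$), statements (2) and (3) are essentially immediate from the partial-separatrix construction recalled just before the proposition. For a connected component $\tilde C$ of $\mbox{ITr}(\pi^*{\mathcal F},E)$, respectively a connected component $C$ of $\mbox{STr}(\pi^*{\mathcal F},E)$, the inclusion $(S_{\tilde C},\tilde C)\subset (M,\pi^{-1}(0))$ is a closed immersion precisely when $\tilde C$ avoids every compact dicritical component of $E$. If it did avoid them all, then $S_{\tilde C}$ would be a closed analytic germ along the compact set $\tilde C\subset\pi^{-1}(0)$, and properness of $\pi$ would turn $\pi(S_{\tilde C})$ into a germ of invariant analytic surface at $0$, contradicting the hypothesis; the same reasoning applied to $S_C$ gives (3). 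Statement (4) fits the same framework: by ($\star$) every non-compact component of $E$ is dicritical, so if no compact component of $E$ were dicritical either, every invariant irreducible component of $E$ would be compact, and the extension of the Cano--Cerveau argument mentioned in the introduction would then produce an invariant germ of surface for $\mathcal F$, again a contradiction.

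For (1), the strategy is to take $\Gamma$ a one-dimensional irreducible component of $\pi^{-1}(0)$. From the discussion following the definition of the reduction of singularities, $\Gamma$ is contained in at least one non-compact irreducible component $D$ of $E$, which by ($\star$) is dicritical. Invariance of $\Gamma$ by $\pi^*{\mathcal F}$ is equivalent to $\Gamma$ being a leaf of, or contained in the singular locus of, the induced one-dimensional holomorphic foliation $\pi^*{\mathcal F}\vert_D$ on the dicritical component $D$. If it were neither, the $\pi^*{\mathcal F}$-saturation of a small analytic disk transverse to $\Gamma$ at a generic regular point of $\pi^*{\mathcal F}\vert_D$ would assemble, along $\Gamma$, into a germ of invariant analytic surface, whose image under $\pi$ would be a germ of invariant surface at $0$, again against the hypothesis. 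The main obstacle of the proof is exactly this last step: one has to check that the transverse saturation really is analytic along the whole compact curve $\Gamma$, extending in particular across the finitely many simple CH-points of $\pi^*{\mathcal F}$ that $\Gamma$ meets; the partial-separatrix and holonomy tools developed for (2) and (3) are the natural ingredients here.
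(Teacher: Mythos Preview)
Your observation ($\star$) is false, and since it is the backbone of your argument for (1) and (4), both of those parts collapse. An irreducible component $D$ of the exceptional divisor $E$ is created by some blow-up $\pi_j$ as $\pi_j^{-1}(Y_{j-1})$, and under the full morphism $\pi$ it is collapsed onto the image of the center $Y_{j-1}$ in $({\mathbb C}^3,0)$. The centers are points or non-singular curves contained in the adapted singular locus, so $\pi(D)$ is a point or a curve, never a surface. In particular, a non-compact invariant component of $E$ is perfectly compatible with the hypothesis that $\mathcal F$ has no invariant surface; the paper explicitly allows such components (see the list of types of exceptional components just before the definition of ``regular'').

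Your treatment of (2) and (3) is in fact correct and coincides with the paper's: the partial separatrix $S_{\tilde C}$ (resp.\ $S_C$) is closed in $(M,\pi^{-1}(0))$ exactly when $\tilde C$ misses all compact dicritical components, and then properness of $\pi$ yields an invariant surface downstairs. You do not actually use ($\star$) here despite announcing that you will.

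For (1) the paper's argument is both simpler and different from yours. If a one-dimensional component $A\subset\pi^{-1}(0)$ is not invariant, then at a generic point $p\in A$ the foliation $\pi^*\mathcal F$ is non-singular and transversal to $A$; the single leaf $\tilde S$ through $p$ already satisfies $\tilde S\cap\pi^{-1}(0)=\{p\}$ locally, so $(\tilde S,p)\subset(M,\pi^{-1}(0))$ is a closed germ and $\pi(\tilde S)$ is an invariant surface. There is no need to assemble anything along the whole of $\Gamma$, and no need to know whether the ambient component of $E$ is dicritical. Your proposed construction (saturating a transverse disk and gluing along $\Gamma$) is both unnecessary and, as you yourself note, has an analyticity obstacle that you do not resolve.

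For (4) the paper argues quite differently: it produces, via a generic two-dimensional Mattei--Moussu section, an invariant curve $\gamma\subset({\mathbb C}^3,0)$ not contained in any blow-up center, lifts it to $\gamma'\subset M$, checks that the point $\{p\}=\gamma'\cap E$ lies in $\mbox{ITr}(\pi^*\mathcal F,E)$, and then invokes (2). Your route via ($\star$) and the Cano--Cerveau argument does not work because ($\star$) fails.
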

\begin{proof}
 1. Let $A$ be a one dimensional irreducible component of $\pi^{-1}(0)$. Assume that $A$ is not invariant by $\pi^*{\mathcal F}$.  At a generic point $p\in A$, we have that $\pi^*{\mathcal F}$ is transversal to $A$ and $\pi^{-1}(0)$ locally coincides with $A$. Then there is a germ $(\tilde S, p)$ of invariant surface for $\pi^*{\mathcal F}$  transversal to $\pi^{-1}(0)$ and such that $\tilde S\cap \pi^{-1}(0)= \{p\}$. Thus we have a closed inclusion of germs $(\tilde S, p)\subset (M,\pi^{-1}(0))$.
Since $\pi$ is a proper morphism, the image $S=\pi(\tilde S)$ is a germ of invariant surface for $\mathcal F$ at $0\in {\mathbb C}^3$. This is the desired contradiction.

2. If $\tilde C$ does not intersect any compact dicritical component of $E$, the partial separatrix $(S_{\tilde C}, \tilde C)$ is closed in $(M,\pi^{-1}(0))$ and thus $S=\pi(S_{\tilde C})$ is an invariant surface for $\mathcal F$.

3. Same argument as in (2).

4. Assume that there are no compact dicritical components to find a contradiction.
 By taking enough two dimensional sections ${\mathcal F}\vert_{\Delta}$ where $\Delta\subset ({\mathbb C}^3,0)$ is non singular and transversal to $\mathcal F$
in the sense of Mattei-Moussu \cite{Mat-M}, we find an invariant curve $\gamma$ that is not included in any center of the sequence of blowing-ups. Let $\gamma'\subset M$ be the strict transform of $\gamma$ and put $\{p\}=\gamma'\cap E$. We know that $p\in \pi^{-1}(0)$. Let us see that $p\in \mbox{\rm ITr}(\pi^*{\mathcal F},E)$. If $p$ is in an invariant component $D$ of $E$, we are done, since $\gamma'\not\subset E$ thus $p$ is singular and it cannot be a corner point (at the corner points the only invariant curves are contained in the divisor). If $p$ is not in an invariant component of $E$, it is a trace point and it belongs to a one dimensional component $\Gamma$ of the fiber, but $\Gamma$ is invariant and thus $p\in \mbox{\rm ITr}(\pi^*{\mathcal F},E)$. Now it is enough to consider the connected component $\tilde C$ of $\mbox{\rm ITr}(\pi^*{\mathcal F},E)$ that contains $p$ and apply (2).
\end{proof}

\section{Complex Hyperbolic Foliations Without Nodal Components}
\label{sec: nonodalcomponents}
We devote this section to giving a proof of Theorem \ref{teo:mainI}. Consider a CH-foliation $\mathcal F$ in $({\mathbb C}^3,0)$ and fix a reduction of singularities $\pi$ as in Equation \ref{eq.redsing}. Let us denote ${\mathcal F}'=\pi^*{\mathcal F}$ and let $E$ be the exceptional divisor of $\pi$. Since all the points of $M$ are CH-simple for ${\mathcal F}',E$,  the singular locus $\mbox{Sing}{\mathcal F}'$ is equal to the adapted singular locus $\mbox{Sing}({\mathcal F}',E)$ and it is a union of nonsingular connected curves
$$
\mbox{Sing}{\mathcal F}'=\Gamma_1\cup \Gamma_2\cup \cdots\cup \Gamma_s.
$$
\begin{definition}
\label{def:genericnodaltype}
An irreducible component $\Gamma_i$ of $\mbox{Sing}{\mathcal F}'$ is {\em of generic nodal type} if and only if $\Gamma_i$ contains a nodal point of dimensional type two.
 \end{definition}
 { Note that this is equivalent to saying that all the points of $\Gamma_i$ of dimensional type two are of nodal type}.
 \begin{definition}
 \label{def:nodalcomponents}
 The {\em generic nodal set\/} ${\rm GN}({\mathcal F}',E)$ is the union of the irreducible components of $\mbox{Sing}{\mathcal F}'$  of generic nodal type. A connected component $\mathcal N$ of $
{\rm GN}({\mathcal F}',E)
$ is a {\em nodal component} for ${\mathcal F}',E$ if and only if all the points of $\mathcal N$
are of nodal type. We say that $\mathcal F$ is {\em without nodal components} if there is a reduction of singularities $\pi$ such that   ${\mathcal F}',E$ is without nodal components.
\end{definition}

\begin{remark}
 \label{rk:nodalofdimthree}
 Let $p\in \mbox{Sing}{\mathcal F}'$ be of dimensional type three. We recall that $\mbox{Sing}{\mathcal F}'$ is locally given at $p$  by three curves $\Gamma_1,\Gamma_2,\Gamma_3$. One of these curves, say  $\Gamma_1$, is never of generic nodal type. Concerning the other ones, we have that $p$ is of nodal type if and only if both $\Gamma_2$ and $\Gamma_3$ are of generic nodal type (see Remark \ref{rk:linealizacionnodal}). In particular, a connected component $\mathcal N$ of
 $
{\rm GN}({\mathcal F}',E)
$
is a nodal component if and only if there are two irreducible components $\Gamma_2$ and $\Gamma_3$ of $\mathcal N$ through any given point $p\in {\mathcal N}$ of dimensional type three.
\end{remark}

 Before starting the proof of Theorem \ref{teo:mainI}, let us give some combinatorial results concerning the
 irreducible components of the exceptional divisor $E$ and the partial separatrices $(S_{\tilde C}, \tilde C)$, where $\tilde C$ runs over the connected components of $\mbox{ITr}({\mathcal F}',E)$.  The elements of the set ${\mathcal E}({\mathcal F}',E)$ of {\em exceptional components} for ${\mathcal F}',E$ are by definition the irreducible components of $E$ and the partial separatrices $(S_{\tilde C},\tilde C)$ (identified one to one with the connected components $\tilde C$ of $\mbox{ITr}({\mathcal F}',E)$). Given two exceptional components $A_1$ and $A_2$, the intersection $A_1\cap A_2$ is the corres\-ponding intersection as germs; it is either the empty set or a finite union of disjoint nonsingular curves. To be precise, we have the following types of exceptional components $A\in {\mathcal E}({\mathcal F}',E)$
 \begin{enumerate}
 \item $A$ is a compact irreducible component of  $E$, it can be dicritical or invariant.
 \item $A$ is a non compact irreducible component of $E$. In this case it is a germ over a finite union of compact curves $A\cap \pi^{-1}(0)$. It can be dicritical or invariant.
 \item $A$ is a partial separatrix. It is a germ $A=(S_{\tilde C}, \tilde C)$ over a finite union $\tilde C$  of curves that is a connected component of $\mbox{ITr}({\mathcal F}',E)$. By construction $A$ is invariant by ${\mathcal F}'$.
 \end{enumerate}
 \begin{definition} An exceptional component $A\in {\mathcal E}({\mathcal F}',E)$ is called {\em regular} if and only if
 it is invariant or a compact dicritical component of $E$. {\rm (The non-regular exceptional components are the non compact dicritical components of $E$)}.
 \end{definition}
 \begin{remark}
  \label{rk:connectedirregular}Assume that ${\mathcal F}$ has no germ of invariant surface. Each irreducible component $\Delta$ of the fiber $\pi^{-1}(0)$ is contained in at least one regular exceptional component $B_\Delta$. If $\Delta$ has dimension two, we are done, since $\Delta$ itself is a compact component of $E$. If $\Delta$ has dimension one, it is invariant by Proposition \ref{pro:invariantfiber} and thus $\Delta$ is contained in the partial separatrix $(S_{\tilde C},\tilde C)$, where $\tilde C$ is the connected component of $\mbox{ITr}({\mathcal F}',E)$ that contains $\Delta$. In particular, since the fiber $\pi^{-1}(0)$ is connected, given two irreducible components $\Delta_1$ and $\Delta_2$ of $\pi^{-1}(0)$, we can find a finite chain of regular exceptional components
  $
  B_0,B_1,\ldots,B_t
  $
  such that $\Delta_1\cap B_0\ne\emptyset\ne B_s\cap \Delta_2$, and $B_{i-1}\cap B_i\ne\emptyset$ for $i=1,2,\ldots,t$.
 \end{remark}
 \begin{definition} Two regular exceptional components $A_1$ and $A_2$ are {\em  nodally-free connected} if and only  if $A_1=A_2$ or there is a finite  chain of regular exceptional components $$A_1=B_0,B_1,\ldots, B_k=A_2$$ such that  $B_{i-1}\cap B_{i}$ contains a not generically nodal curve, for $i=1,2,\ldots,k$.
 \end{definition}
 \begin{lemma}
  \label{lema: connexionnodallibre}
  Assume that the pair ${\mathcal F}'$, $E$ is without nodal components and ${\mathcal F}$ has no germ of invariant surface. Any given pair $A_1,A_2$ of regular exceptional components is nodally-free connected.
 \end{lemma}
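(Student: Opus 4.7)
The plan is to combine the fiber-based connectivity of Remark~\ref{rk:connectedirregular} with a local ``triangle'' argument at a non-nodal point inside each connected component of the generic nodal set. First, every regular exceptional component meets $\pi^{-1}(0)$: a compact $E$-component is itself a two-dimensional component of the fiber; a non-compact $E$-component is a germ along one-dimensional fiber components; a partial separatrix $(S_{\tilde C},\tilde C)$ is a germ along $\tilde C\subset \mathrm{Inv}(\pi^{-1}(0))$. Hence Remark~\ref{rk:connectedirregular}, applied to the fiber components contained in $A_1$ and $A_2$, yields a chain $A_1=B_0,B_1,\dots,B_k=A_2$ of regular exceptional components with $B_{i-1}\cap B_i\ne\emptyset$ for each $i$. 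Thus it suffices to prove the following key claim: \emph{if $B\ne B'$ are two regular exceptional components with $B\cap B'\ne\emptyset$, then $B$ and $B'$ are nodally-free connected.}

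Because $E$ has normal crossings and each partial separatrix has normal crossings with $E$ (and two distinct partial separatrices are disjoint, since the lemma preceding Proposition~\ref{pro:invariantfiber} gives exactly one separatrix off $E$ at each point of $\mathrm{ITr}$), the intersection $B\cap B'$ is a union of smooth curves. If some curve of $B\cap B'$ is not generically nodal, we are done directly; otherwise every such curve is an irreducible component of $\mathrm{Sing}\,\mathcal F'$ and generically nodal. Pick one, $\Gamma\subset B\cap B'$, and let $\mathcal N$ be the connected component of $\mathrm{GN}(\mathcal F',E)$ containing $\Gamma$. By the hypothesis that $\mathcal F',E$ has no nodal components, $\mathcal N$ contains a point $p$ that is not of nodal type, and such a $p$ must have dimensional type three, since every dim-type-two point on a generically nodal curve is nodal.

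At $p$ exactly three regular exceptional components $C_1,C_2,C_3$ meet, and the three singular curves through $p$ are their pairwise intersections. By Remark~\ref{rk:nodalofdimthree}, a dim-type-three point is nodal precisely when two of those three curves are generically nodal; since $p\in\mathcal N$ is non-nodal, exactly one is. Relabel so that $C_1\cap C_3$ is the generically nodal one while $C_1\cap C_2$ and $C_2\cap C_3$ are not. These two ``good'' intersections yield $C_1\sim C_2\sim C_3$, so in particular the pair $\{C_1,C_3\}$ containing the generically nodal curve through $p$ is nodally-free connected.

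It remains to propagate this from the curve $C_1\cap C_3$ to the arbitrary $\Gamma\subset B\cap B'$. Since $\mathcal N$ is a connected union of generically nodal curves joined at dim-type-three (nodal) points, there is a chain $\Gamma_0=C_1\cap C_3,\Gamma_1,\dots,\Gamma_r=\Gamma$ inside $\mathcal N$ with consecutive curves sharing a point $q_j\in\Gamma_{j-1}\cap\Gamma_j$. At each $q_j$ three regular exceptional components meet, and among their pairwise intersections exactly two are generically nodal (namely $\Gamma_{j-1}$ and $\Gamma_j$) while the third is not. If the pair containing $\Gamma_{j-1}$ is inductively nodally-free connected, then transitivity through the third (not generically nodal) intersection at $q_j$ promotes this to the pair containing $\Gamma_j$; taking $j=r$ gives $B\sim B'$, as required. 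The principal obstacle is exactly this propagation step: the hypothesis furnishes only \emph{one} non-nodal point somewhere in $\mathcal N$, and one must bootstrap the local triangle observation from that single point along the entire graph of nodal curves constituting $\mathcal N$.
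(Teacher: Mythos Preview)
Your proof is correct and follows essentially the same route as the paper's: reduce to adjacent regular components via Remark~\ref{rk:connectedirregular}, then use the absence of nodal components to find a non-nodal triple point in the connected component $\mathcal N$ of ${\rm GN}(\mathcal F',E)$ containing $\Gamma$, and push the local ``triangle'' relation along a chain of generically nodal curves in $\mathcal N$. The only cosmetic difference is that the paper runs the induction from $\Gamma=A_1\cap A_2$ toward the non-nodal point (induction on the length $k$ of the chain), whereas you propagate outward from the non-nodal point back to $\Gamma$; the local step at each nodal dim-type-three point is identical in both arguments.
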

 \begin{proof} Let us first reduce the problem to the case that $A_1\cap A_2\ne \emptyset$. We know that the exceptional divisor $E$ is connected. Thus we can find a finite chain
 $$
 A_1=B'_0,B'_1,B_2,\ldots, B'_s,B'_{s+1}=A_2
 $$
 such  that $B'_i\cap B'_{i+1}\ne \emptyset$ for $i=0,1,\ldots,s$ and $B'_1,B'_2,\ldots,B'_s$ are irreducible components of $E$.
 Now, suppose that $B'_i$ is the last non compact dicritical component in the list.
 We know that $B'_i$ is a germ along the connected finite union of compact curves $B'_i\cap \pi^{-1}(0)$. Thus, there are  irreducible components $\Delta_1$ and $\Delta_2$ of the fiber such that
 $$
 B'_{i-1}\cap \Delta_1\ne\emptyset,
 \Delta_1\cap B'_i\ne\emptyset,
 B'_i\cap \Delta_2\ne\emptyset, \Delta_2\cap B'_{i+1}\ne\emptyset.
 $$
 We can substitute $B'_i$ by the sequence $
 B_0,B_1,\ldots,B_{t}
 $ of regular exceptional components
 given in
 Remark \ref{rk:connectedirregular}.
 Applying finite induction in this way, we can suppose that all the $B'_i$ are regular exceptional components. This reduces the problem to the case that $A_1\cap A_2\ne\emptyset$.

 Now, assume that $A_1\cap A_2$ intersect only at generically nodal curves (note that $A_1$ and $A_2$ are necessarily invariant by ${\mathcal F}'$).
  Take a point $p\in A_1\cap A_2\cap \pi^{-1}(0)$. The intersection $A_1\cap A_2$ locally at $p$ coincides with a generically nodal curve $\Gamma$. Since there are no nodal components, we can find generically nodal curves $\Gamma=\Gamma_0,\Gamma_1,\ldots,\Gamma_k$  and points $
 p=p_0, p_1,\ldots,p_{k}, p_{k+1}
 $
 such that
 $$
 p_0\in \Gamma_0, p_1\in \Gamma_0\cap\Gamma_1,p_2\in \Gamma_1\cap\Gamma_2,\ldots,p_k\in \Gamma_{k-1}\cap\Gamma_k, p_{k+1}\in \Gamma_k,
 $$
where the points $p_{i}$ are nodal points for $i=1,2,\ldots,k$
 and  $p_{k+1}$ is not a nodal point. We shall proceed by induction on this length $k$. If $k=0$, then $p_1\in \Gamma$ is not a nodal point. Moreover, the curve $\Gamma$ is locally given at $p_1$ by $A_1\cap A_2$. Since the dimensional type of $p_1$ is three, there is an invariant exceptional component $B$ transversal to $\Gamma$
  at $p_1$ such that the intersections $\Delta_1=A_1\cap B$ and $\Delta_2\cap B$ are curves not generically nodal. Then $A_1$ and $A_2$ are nodally-free connected through $B$.

  Assume now that $k\geq 1$. The curve $\Gamma_1$ is locally at $p_1$ the intersection of two invariant exceptional components $B_1$ and $B_2$. Moreover one of them, say $B_1$ is equal to $A_1$ or $A_2$, say that $B_1=A_1$; then, $B_2\cap A_2$ defines at $p_1$ a curve $\Delta$ that is not generically nodal (see Remark \ref{rk:nodalofdimthree}). We have that $A_2$ is nodally-free connected with $B_2$, by induction on $k$, we also have that $B_2$ is nodally-free connected with $B_1=A_1$ and we are done.
 \end{proof}

Let us suppose that the pair ${\mathcal F}', E$ is without nodal components and $\mathcal F$ has no germ of invariant surface. In order to give a proof of Theorem \ref{teo:mainI} we need to find a neighborhood $U$ of the origin and hence we start by representing our objects and morphisms in appropriate sets. We consider an open neighborhood $U_0$ of the origin of ${\mathbb C}^3$ such that the following properties hold
\begin{enumerate}
\item The foliation $\mathcal F$ is represented in $U_0$. We denote by $\tilde {\mathcal F}$ the corresponding foliation on $U_0$.
\item The morphism of germs $\pi: (M,\pi^{-1}(0))\rightarrow ({\mathbb C}^3,0)$ is represented in $U_0$ by
$$
\tilde \pi: \tilde M=\pi^{-1}(U_0)\rightarrow U_0.
$$
Moreover, we ask $\tilde \pi$ to be a composition of blowing-ups with connected nonsingular centers, in such a way that the centers of $\pi$ are the corresponding germs of subvarieties.
\item The total exceptional divisor $\tilde E$ of $\tilde \pi$ is a normal crossings divisor. Note of course that $E$ coincides with the germ $(\tilde E, \tilde\pi^{-1}(0))$.
\item The points in $\tilde M$ are CH-simple points for $\tilde {\mathcal F'}, \tilde E$, where $\tilde{\mathcal F'}=\tilde\pi^*\tilde{\mathcal F}$.
 \item The irreducible components of $\mbox{Sing}{\tilde F}'$ correspond one to one with the irreducible components of   $\mbox{Sing}{F}'$. The last ones are germs of curves or compact curves, the first ones are non singular closed curves that can be non compact or compact ones. In particular, the  {\em generic nodal set\/} ${\rm GN}(\tilde {\mathcal F}',\tilde E)$ is defined as the union of the generically nodal irreducible components of $\mbox{Sing}{\tilde F}'$ and it is a representant of the germ ${\rm GN}({\mathcal F}',E)$.
\end{enumerate}
For each connnected component $\tilde C$ of $\mbox{ITr}({\mathcal F}',E)$ we fix an open neighborhood $U_{\tilde C}$ of $\tilde C$ and a  representant $\tilde S_{\tilde C}$ of the partial separatrix $(S_{\tilde C}, \tilde C)$ such that $\tilde S_{\tilde C}\subset U_{\tilde C}$ is a connected non singular closed surface that has normal crossings with $\tilde E\cap U_{\tilde C}$. The {\em exceptional components of $\tilde {\mathcal F}, \tilde E$} are the irreducible components of $\tilde E$ and the closed surfaces $\tilde S_{\tilde C}$.

Let us consider the subset $H\subset \tilde M$ defined as the union of leaves $\tilde L$ of $\tilde {\mathcal F}'$ that intersect at least one compact dicritical component of $\tilde E$.

\begin{proposition}
 \label{prop:nonodalcomponents}   $H\cup \tilde E$ is a neighborhood of $\pi^{-1}(0)$ in $\tilde M$.
 \end{proposition}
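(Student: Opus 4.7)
The plan is to prove $\pi^{-1}(0) \subset \mathcal G$, where $\mathcal G := \mathrm{int}(H \cup \tilde E)$ is the open set of points of $\tilde M$ admitting a neighborhood inside $H \cup \tilde E$. By Remark \ref{rk:connectedirregular}, the fiber $\pi^{-1}(0)$ is covered by the compact parts of the regular exceptional components of $(\tilde{\mathcal F}', \tilde E)$, so it suffices to establish, for every such component $A$, that $A \cap \pi^{-1}(0) \subset \mathcal G$; call such $A$ \emph{good}. The idea is to propagate goodness starting from a compact dicritical component (which exists by Proposition \ref{pro:invariantfiber}(4)) along the combinatorial chain of non-generically-nodal intersections furnished by Lemma \ref{lema: connexionnodallibre}, using Proposition \ref{prop:sillas} as the local propagation engine.

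First I record two preparatory facts. \emph{Fact 1:} every compact dicritical component $D$ is good. At $p \in D$, the dimensional type $\tau$ is at most $2$, and local coordinates may be chosen so that $D = \{x_j = 0\}$ for some $j > \tau$ with $\omega$ independent of $x_j$. Hence every local leaf of $\tilde{\mathcal F}'$ is cylindrical in the $x_j$-direction and meets $D$; since $D$ is compact, the global leaf meets $D$ and lies in $H$, so a polydisc at $p$ is contained in $H \cup \tilde E$. \emph{Fact 2:} every partial separatrix $S_{\tilde C}$ lies in $H \cup \tilde E$. By Proposition \ref{pro:invariantfiber}(2), $\tilde C$ meets some compact dicritical $D$; the leaf of $\tilde{\mathcal F}'$ through a point of $S_{\tilde C} \cap D$ is contained in $S_{\tilde C}$ by invariance and meets $D$, so it lies in $H$. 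Since $S_{\tilde C} \setminus \mathrm{Sing}\,\tilde{\mathcal F}'$ is a connected 2-manifold consisting of this single leaf, one concludes $S_{\tilde C} \subset H \cup \mathrm{Sing}\,\tilde{\mathcal F}' \subset H \cup \tilde E$.

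The propagation step is: if $A$ is good and $B$ is a regular exceptional component such that $A \cap B$ contains a non-generically-nodal curve $\Gamma$, then $\mathcal G$ contains a neighborhood of $\Gamma \cap \pi^{-1}(0)$. Take a generic $q \in \Gamma \cap \pi^{-1}(0)$. If $\Gamma \not\subset \mathrm{Sing}\,\tilde{\mathcal F}'$ (which occurs precisely when one of $A, B$ is a compact dicritical component, as the singular locus lies on invariant hyperplanes), then Fact 1 applied to that dicritical component yields $q \in \mathcal G$. Otherwise, both $A$ and $B$ are invariant and $q$ is a simple CH-point of dimensional type $2$ of saddle type. I apply Proposition \ref{prop:sillas} with a small curve $\Delta$ transversal to the invariant hyperplane corresponding to $A$ at a point $Q \in A$ sufficiently close to $q$ that $Q \in \mathcal G$ (such $Q$ exists because $A$ is good and $\mathcal G$ is open). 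For $\Delta$ small, $\Delta \setminus \tilde E$ lies in a neighborhood of $Q$ inside $H \cup \tilde E$, hence inside $H$. Since $H$ is $\tilde{\mathcal F}'$-saturated, $\mathrm{Sat}(\Delta) \subset H$; and the two local invariant hyperplanes $(x_1 = 0)$ and $(x_2 = 0)$ together constitute $A \cup B$ locally at $q$, which by Fact 2 (or the defining inclusion in $\tilde E$) is contained in $H \cup \tilde E$. Thus $\mathrm{Sat}(\Delta) \cup (x_1 x_2 = 0) \subset H \cup \tilde E$ is a neighborhood of $q$, giving $q \in \mathcal G$.

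To conclude, Lemma \ref{lema: connexionnodallibre} links every regular exceptional component to the base compact dicritical of Fact 1 by a finite chain of non-generically-nodal intersection curves. Iterated propagation along the chain gives, for every regular exceptional component $B$, a neighborhood of each non-nodal intersection curve in $B$ inside $\mathcal G$; the collection of these covers $B \cap \pi^{-1}(0)$ (by the same chain argument applied within $B$), so $B$ is good. The main technical obstacle is the propagation step: without Fact 2 one would be in circular trouble when $B$ is a partial separatrix not contained in $\tilde E$, since the argument requires the local union $A \cup B$ to sit inside $H \cup \tilde E$. Fact 2, which combines Proposition \ref{pro:invariantfiber}(2) with the observation that a partial separatrix is essentially a single leaf of $\tilde{\mathcal F}'$, breaks this circularity.
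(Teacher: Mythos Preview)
Your overall strategy matches the paper's: cover $\pi^{-1}(0)$ by regular exceptional components (Remark~\ref{rk:connectedirregular}), link them by non-nodal chains (Lemma~\ref{lema: connexionnodallibre}), and propagate from a compact dicritical component via Proposition~\ref{prop:sillas}. Facts~1 and~2 are fine, and Fact~2 is a pleasant observation the paper leaves implicit.

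The gap is in your conclusion. After iterating the propagation step you assert that ``the collection of these covers $B\cap\pi^{-1}(0)$''. This is not justified: your propagation step only produces a neighborhood of the \emph{generic} points of \emph{non-generically-nodal} intersection curves. Two kinds of points of $B\cap\pi^{-1}(0)$ are left uncovered. First, points on generically nodal curves in $B$ are never addressed at all; nothing in your argument touches ${\rm GN}(\tilde{\mathcal F}',\tilde E)$. Second, even on a non-generically-nodal curve $\Gamma$, the finitely many points of dimensional type three may themselves be nodal (by Remark~\ref{rk:nodalofdimthree}, a nodal $\tau=3$ point has exactly one non-generically-nodal branch, which could be $\Gamma$), and at such points Proposition~\ref{prop:sillas} does not apply.

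This is exactly where the hypothesis ``without nodal components'' enters, and your proof never uses it. The paper handles this with a separate step: take any connected component $\mathcal N$ of ${\rm GN}(\tilde{\mathcal F}',\tilde E)$; since $\mathcal N$ is not a nodal component, it contains a non-nodal point $p$ (necessarily of dimensional type three). By the first part of the argument, $H\cup\tilde E$ is already a neighborhood of $p$. One then invokes Remark~\ref{rk:transicionnodal}: at a nodal $\tau=3$ point, the saturation of a small neighborhood of any point of one nodal branch is a full neighborhood of $xyz=0$. Iterating this along $\mathcal N$ spreads the coverage to all of $\mathcal N$. Without this step --- and without the ``no nodal components'' hypothesis that makes it possible --- your argument cannot close.
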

 \begin{proof} By Remark \ref{rk:connectedirregular} each irreducible component $\Delta$ of the fiber $\pi^{-1}(0)$ is contained in at least one regular exceptional component $B_\Delta$. In this way, we can include $\pi^{-1}(0)$ in the union of the regular exceptional components. By Lemma \ref{lema: connexionnodallibre} each two regular exceptional components are nodally-free connected. Now, we proceed as follows:
 \begin{enumerate}
 \item We show that $H\cup \tilde E$ is a neighborhood of $B\setminus {\rm GN}(\tilde {\mathcal F}',\tilde E)$, for each regular exceptional component $B$.
 \item We show that $H\cup E^N$ is a neighborhood of ${\rm GN}(\tilde {\mathcal F}',\tilde E)$.
 \end{enumerate}
 Let us prove the first assertion. If $B$ is a compact dicritical component, we are done. Otherwise, by Lemma \ref{lema: connexionnodallibre} and by Proposition \ref{pro:invariantfiber}
there is a finite chain of regular exceptional components that connects $B$ with a compact dicritical component $B'$ through non-nodal curves. Now we do a holonomic transport from $B'$ to $B$ that allows us to cover with the leaves arriving to $B'$ the part of $B$ outside the generically nodal curves. To do this we invoke the behavior of the leaves at the regular points and at the simple points that are not nodal ones, described in Proposition \ref{prop:sillas}.

 In order to prove the second assertion, given a connected component $\mathcal N$ of ${\rm GN}(\tilde {\mathcal F}',\tilde E)$, we find a non-nodal point $p\in {\mathcal N}$. Then $H\cup \tilde E$ is a neighborhood of this point and by saturation along $\mathcal N$, applying Remark \ref{rk:transicionnodal}, we cover $\mathcal N$.
\end{proof}
Now, Proposition \ref{prop:nonodalcomponents} implies Theorem \ref{teo:mainI} as follows.
The set $V=\tilde\pi(H\cup \tilde E)$ is a  neighborhood of the origin in $U_0$ saturated by $\mathcal F$. Let $W\subset {\mathbb C}^3$ be an open neighborhood of the origin with $W\subset V$. The saturation $\mbox{Sat}_{\tilde{\mathcal F}}W$ is open and contained in $V$. We take $U=\mbox{Sat}_{\tilde {\mathcal F}}W$. Let $L$ be a leaf in $U$.
 Now $\tilde\pi^{-1}(L)\setminus \tilde E$ is connected and invariant by $\tilde {\mathcal F}$. Let $\tilde L$ be the leaf of $\tilde {\mathcal F}$ containing $\tilde\pi^{-1}(L)\setminus \tilde E$.
 \begin{lemma} $\tilde L\setminus \tilde E=\tilde\pi^{-1}(L)\setminus \tilde E$.
 \end{lemma}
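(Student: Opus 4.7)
The inclusion $\tilde\pi^{-1}(L)\setminus \tilde E\subset \tilde L\setminus \tilde E$ is immediate from the construction of $\tilde L$ as the leaf of $\tilde{\mathcal F}'|_{\tilde\pi^{-1}(U)}$ containing $\tilde\pi^{-1}(L)\setminus \tilde E$, so the content of the lemma is the opposite inclusion. My plan for this is to transport $\tilde L\setminus \tilde E$ down through the restriction of $\tilde\pi$ to $\tilde\pi^{-1}(U)\setminus\tilde E$, which is a biholomorphism onto $U\setminus\tilde\pi(\tilde E)$ conjugating $\tilde{\mathcal F}'$ with $\tilde{\mathcal F}$, and to conclude that the image must land inside $L$ by a connectedness argument.

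First I would show that $\tilde L\not\subset\tilde E$ and that $\tilde L\cap\tilde E$ has complex dimension at most one inside $\tilde L$. Non-containment is clear because, $L$ having complex dimension two and $\tilde\pi(\tilde E)$ being a proper analytic subset of $U_0$, the set $\tilde\pi^{-1}(L)\setminus\tilde E$ is non-empty and is contained in $\tilde L$. For the dimension bound, if $D$ is an invariant irreducible component of $\tilde E$ and $p\in\tilde L\cap D$, then $p$ is a regular point of $\tilde{\mathcal F}'$ (leaves avoid $\mbox{Sing}{\tilde{\mathcal F}'}$) and the local leaf through $p$ coincides with $D$; by connectedness of $\tilde L$, this would force $\tilde L\subset\tilde E$, which has just been ruled out. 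So $\tilde L$ meets $\tilde E$ only at regular points lying on dicritical components, and at each such point the dicritical component is transversal to $\tilde{\mathcal F}'$, so $\tilde L\cap\tilde E$ is cut out locally by one equation in $\tilde L$. Hence $\tilde L\cap\tilde E$ is an analytic subset of $\tilde L$ of complex codimension at least one, i.e.\ real codimension at least two, and so $\tilde L\setminus\tilde E$ is connected.

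Now I would apply $\tilde\pi$. The image $\tilde\pi(\tilde L\setminus\tilde E)$ is a connected subset of $U\setminus\tilde\pi(\tilde E)$ which, via the biholomorphism above, is saturated by the foliation $\tilde{\mathcal F}|_{U\setminus\tilde\pi(\tilde E)}$ in the sense of being contained in a single leaf of the restricted foliation. Any such leaf is a connected component of $L'\setminus\tilde\pi(\tilde E)$ for some leaf $L'$ of $\tilde{\mathcal F}$ in $U$. But $\tilde\pi(\tilde L\setminus\tilde E)$ contains the non-empty set $\tilde\pi\bigl(\tilde\pi^{-1}(L)\setminus\tilde E\bigr)=L\setminus\tilde\pi(\tilde E)$, forcing $L'=L$. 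Therefore $\tilde\pi(\tilde L\setminus\tilde E)\subset L$, which gives $\tilde L\setminus\tilde E\subset\tilde\pi^{-1}(L)\setminus\tilde E$, completing the proof.

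The step I expect to take the most care is the dimension-and-connectedness analysis in the second paragraph: one has to argue cleanly that a connected leaf $\tilde L$ of $\tilde{\mathcal F}'$ which is not contained in $\tilde E$ cannot even touch an invariant component of $\tilde E$, and that its intersection with the dicritical components is of pure complex dimension one. Once this is in place, the rest is a direct application of the biholomorphism property of $\tilde\pi$ away from $\tilde E$ together with the maximality of $L$ as a leaf in $U$.
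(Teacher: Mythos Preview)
Your proof is correct, and the overall shape—``$\tilde L$ only touches $\tilde E$ along dicritical components, hence in real codimension $\geq 2$, so $\tilde L\setminus\tilde E$ stays connected and projects into a single leaf''—is sound. The one place to be slightly careful is the ambient space for $\tilde L$: the paper defines $\tilde L$ as a leaf of $\tilde{\mathcal F}'$ in $\tilde M$ (so that $\tilde L\subset H$ makes sense), not merely in $\tilde\pi^{-1}(U)$; your push-down argument then needs $\tilde\pi(\tilde L\setminus\tilde E)$ to land in a single leaf of $\tilde{\mathcal F}$ in $U_0$, and to identify that leaf with $L$ you use that $U$ is $\tilde{\mathcal F}$-saturated. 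This is fine, just worth stating explicitly.

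The paper's own proof reaches the same conclusion by a different mechanism. Instead of removing $\tilde E$ and arguing connectedness of the complement, it takes an arbitrary path $\delta\colon[0,1]\to\tilde L$ from a point of $\tilde\pi^{-1}(L)\setminus\tilde E$ to a point $q\in\tilde L\setminus\tilde E$, and shows directly that the set of parameters $t$ with $\delta(t)\in(\tilde\pi^{-1}(L)\setminus\tilde E)\cup E_{\mathrm{dic}}$ is open and closed in $[0,1]$, using the local transversality of dicritical components to the foliation. So the paper allows the path to cross $\tilde E$ at dicritical points and handles those crossings by a local study, whereas you excise $\tilde E$ entirely and rely on a global codimension argument. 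Your approach is arguably cleaner conceptually; the paper's is more elementary in that it avoids invoking connectedness of complements of analytic subsets and works purely with paths.
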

 \begin{proof}  Take  points $p\in \tilde\pi^{-1}(L)\setminus \tilde E$ and  $q\in \tilde L\setminus \tilde E$. Since the two points are in $\tilde L$, there is a compact path $\delta(t)$, $\delta(0)=p$, $\delta(1)=q$ such that $\delta(t)\in \tilde L$ for $t\in[0,1]$. By a local study at the points of the dicritical components,  the set of the $t\in [0,1]$ such that
 $$
\delta(t)\in (\tilde\pi^{-1}(L)\setminus \tilde E)\cup \bigcup\{E^N_j;\; E^N_j\mbox{ is a dicritical component of } E^N\}
 $$
 is closed and open in $[0,1]$. Thus, $q=\delta(1)\in \tilde\pi^{-1}(L)$.
 \end{proof}

 Now,  since $\tilde L\subset H$, there is a compact dicritical component $ E_j$ of $E$ such that $\tilde L\cap E_j\ne \emptyset$. We find a germ of non singular analytic curve $\tilde \gamma\subset \tilde L$ transversal to $E_j$ in a point
 $p\in E_j$ with $e(E,p)=1$. The projection $\gamma=\pi(\tilde\gamma)$ is a germ of curve contained in $L$.

\section{Singular Locus of a RICH-Foliation}
\label{sec:singlocus}
In this section we describe some features of the singular locus of a RICH-foliation $\mathcal F$ at the intermediate steps of a fixed  RI-reduction of singularities \begin{equation}
\label{eq:sucesionreduccion}\pi:(M,\pi^{-1}(0))\rightarrow ({\mathbb C}^3,0).\end{equation}
We recall that $\pi$ is a composition $\pi=\pi_1\circ\pi_2\circ\cdots\circ\pi_N$ of blow-ups $\pi_k:M_k\rightarrow M_{k-1},\,k=1,2,\ldots,N$ such that for any $0\leq k\leq N-1$ we have
\begin{enumerate}
\item The center $Y_{k}\subset M_{k}$ of the blow-up $\pi_{k+1}$ is non singular, has normal crossings with the total exceptional divisor $E^{k}\subset M_{k}$ and it is contained in the adapted singular locus $\mbox{Sing}({\mathcal F}_{k},E^{k})$, where ${\mathcal F}_{k}$ is the transform of  $\mathcal F$ (in particular it is invariant by  ${\mathcal F}_{k}$).
\item
 \label{ri2}
 The intersection $Y_{k}\cap (\pi_1\circ\pi_2\circ\cdots\circ\pi_{k})^{-1}(0)$ is a single point.
\item All the points of $M_N=M$ are CH-simple for  $\pi^*{\mathcal F}, E$, where $E=E^N$ is the total exceptional divisor.
\end{enumerate}
Given $0\leq k\leq k'\leq N$ we denote $\pi_{kk}=\mbox{id}_{M_k}$ and
$
\pi_{k'k}=\pi_{k+1}\circ\pi_{k+2}\circ\cdots\pi_{k'}
$
 if $k<k'$. We take special notations for some particular cases $\rho_{k}=\pi_{Nk}$, $\sigma_k=\pi_{k0}$, $\pi=\sigma_k\circ \rho_k$, with
 $\rho_{k}:M=M_N\rightarrow M_k$ and $\sigma_k:M_k\rightarrow ({\mathbb C}^3,0)$.
 We decompose the exceptional divisor $E^k$ into irreducible components
$$
E^k=E^k_1\cup E^k_2\cup \cdots \cup  E^k_k
$$
where $E^k_j$ is the strict transform by $\pi_k$ of $E^{k-1}_j$ for $j<k$ and $E^k_k=\pi_k^{-1}(Y_{k-1})$. We write $E^k_{\mbox{\small inv}}\subset E^k$, respectively $E^k_{\mbox{\small dic}}\subset E^k$, the union of the irreducible components of $E^k$ invariant by ${\mathcal F}_k$, respectively the generically transversal (dicritical) components of $E^k$.
\begin{remark}
\label{rk:firstpropertiesS}
The $\pi_{k'k}$  are morphisms of germs $\pi_{k'k}:(M_{k'},\sigma_{k'}^{-1}(0))\rightarrow(M_k,\sigma_k^{-1}(0))$ around the compact subsets $\sigma_k^{-1}(0)\subset M_k$ and $\sigma_{k'}^{-1}(0)\subset M_{k'}$.
An irreducible component $E^k_i$ of the exceptional divisor $E^k$ is compact if and only $E^k_i\subset \sigma_{k}^{-1}(0)$ and this is equivalent to saying that $Y_{i-1}\subset \sigma_{i-1}^{-1}(0)$. In view of Property (\ref{ri2}) of the  reduction of singularities, this is also equivalent to saying that $Y_{i-1}$ is a single point.
Conversely, the irreducible component $E^k_i$ is non-compact if and only if  the center $Y_{i-1}$ is a germ of curve. Moreover, in this case $Y_{i-1}$ is a germ of curve not contained in $\sigma_{i-1}^{-1}(0)$, in particular, it projects by $\sigma_{i-1}$ onto a curve in $M_0=({\mathbb C}^3,0)$.
\end{remark}
\begin{remark}
\label{rk:points in curves}
Let $\Gamma\subset M_k$ be a curve contained in the adapted singular locus of ${\mathcal F}_k, E^k$. By Property (\ref{ri2}), we see that only finitely many points of $\Gamma\cap\sigma_k^{-1}(0)$ are not simple points for ${\mathcal F}_k, E^k$. In particular, if $\Gamma$ is a compact curve, that is $\Gamma\subset \sigma_k^{-1}(0)$, all points in $\Gamma$, except maybe finitely many, are simple points for  ${\mathcal F}_k, E^k$; moreover, up to eliminating finitely many other points of dimensional type three, the foliation has dimensional type two along $\Gamma$.
\end{remark}
\begin{remark}
\label{rk:gradodicritico}
If $E^k_j$ is a compact dicritical component of ${\mathcal F}_k$, there are only finitely many points in $\mbox{Sing}({\mathcal F}_k,E^k)\cap E^k_j$. That is, there is no curve  $\Gamma$ contained in $\mbox{Sing}({\mathcal F}_k,E^k)\cap E^k_j$. If such a curve exists, by  Remark \ref{rk:points in curves}
all points in $\Gamma$, except may be finitely many of them, are simple points for ${\mathcal F}_k,E^k$; now, if $q\in \Gamma\cap E^k_j$ is a simple point for ${\mathcal F}_k,E^k$, the dimensional type of ${\mathcal F}_k$ in $q$ is two and $\Gamma$ is transversal to $E^k_j$, contradiction with the fact that $\Gamma\subset E^k_j$. This property has the following consequence in terms of local equations. Take a point $q\in E^k_j$ and suppose that ${\mathcal F}_k$ is locally given at $q$ by $\omega=0$ where
\begin{equation}
\label{eq:restriccion}
\omega=a(x,y,z)dx+b(x,y,z)dy+c(x,y,z)dz;\quad E^k_j=(z=0)
\end{equation}
and $a,b,c$ do not have common factors in ${\mathbb C}\{x,y,z\}$. Then the restriction $\mathcal G$ of ${\mathcal F}_k$ to $E^k_j$ is locally given at $q$ by $\eta=a(x,y,0)dx+b(x,y,0)dy$ and moreover $a(x,y,0), b(x,y,0)$ do not have common factors in ${\mathbb C}\{x,y\}$.
\end{remark}

\begin{definition}
 A point $p\in M_k$ is {\em a trace point for} ${\mathcal F}_k, E^k$ if and only if it is not a pre-simple CH-corner for ${\mathcal F}_k, E^k$. If in addition we have that $p\in \mbox{Sing}({\mathcal F}_k,E^k)$, we say that $p$ is an {\em s-trace point}. An irreducible curve $\Gamma\subset \mbox{Sing}({\mathcal F}_k,E^k)$ is an {\em s-trace curve for } ${\mathcal F}_k, E^k$ if and only if all the points of $\Gamma$ are s-trace points.
 \end{definition}
 \begin{remark}
  \label{rk:closednesstrace}
  By the local description of pre-simple CH-corners, all the singular points around a pre-simple CH-corner are also pre-simple CH-corners.  More precisely, the set of s-trace points
 \begin{equation*}
 \label{eq_straza2}
  \mbox{\rm STr}({\mathcal F}_k,E^k)=\mbox{Sing}({\mathcal F}_k,E^k)\setminus \{\mbox{pre-simple CH-corners}\}
  \end{equation*}
  is a closed analytic subset of $
  \mbox{Sing}({\mathcal F}_k,E^k)
  $. Hence the s-trace curves are the irreducible components of dimension one of $
  \mbox{\rm STr}({\mathcal F}_k,E^k)
  $.
 \end{remark}
 \begin{notation}
Given a point $p\in M$ and a normal crossings divisor $D\subset M$ we denote by $e(D;p)$ the number of irreducible components of $D$ passing through $p$. In the same way, if $\Gamma\subset M$ is an irreducible curve, we denote by $e(D;\Gamma)$ the number of irreducible components $D_j$ of $D$ such that $\Gamma\subset D_j$.
\end{notation}
 \begin{remark}
  \label{rk:divisortrace}
  Let $\Gamma\subset \mbox{Sing}({\mathcal F}_k,E^k)$ be an irreducible curve. We have the following properties
   \begin{enumerate}
   \item The curve $\Gamma$ is not an s-trace curve if and only if all but finitely many points in $\Gamma$ are pre-simple CH-corners of dimensional type two. In this case $\Gamma$ is the intersection of two invariant components $\Gamma=E^k_i\cap E^k_j$. We also say that $\Gamma$ is {\em generically pre-simple CH-corner}.
   \item
   Assume that the generic point of $\Gamma$ is simple for ${\mathcal F}_k,E^k$; note that this is always the case if $\Gamma$ is a compact curve $\Gamma\subset \sigma_k^{-1}(0)$. Then $\Gamma$ is an s-trace curve if and only if $e(E^k_{\mbox{\small inv}},\Gamma)\leq 1$.
  \end{enumerate}
  Moreover, in the case that $\Gamma$ is a compact curve, we have exactly the following two possibilities
\begin{enumerate}
\item[a)]  $e(E^k,\Gamma)=e(E^k_{\mbox{\small inv}},\Gamma)=2$ and $\Gamma$ is generically simple CH-corner.
\item[b)] $e(E^k,\Gamma)=e(E^k_{\mbox{\small inv}},\Gamma)=1$ and $\Gamma$ is an s-trace curve.
\end{enumerate}\end{remark}

Let us do some considerations about the fiber $\sigma_k^{-1}(0)$. It decomposes into irreducible components
\begin{equation*}
\sigma_k^{-1}(0)=A_1^k\cup A_2^k\cup\cdots\cup A_{s_k}^k
\end{equation*}
where
each one is either a compact irreducible component of $E^k$ or a compact nonsingular curve. More precisely, $\sigma_k^{-1}(0)$ has normal crossings with $E^k$. If $A_i^k$ is an irreducible component of $\sigma_k^{-1}(0)$ of dimension one, the intersection of $A_i^k$ with a compact component of $E^k$ is at most one point and there is at least one non compact component $E^k_j$ such that $A_i^k\subset E^k_j$.
Denote by $\mbox{Inv}(\sigma_k^{-1}(0))\subset \sigma_k^{-1}(0)$ the union of the invariant irreducible components of $\sigma_k^{-1}(0)$.

Let us recall that if $\mathcal F$ is a foliation on  $M$ and $p\in M$ , {\em the multiplicity $\nu_p{\mathcal F}$ } is defined as the minimum of the multiplicities $\nu_p(a_i)$, $i=1,2,\ldots,n$, where $\mathcal F$ is locally given by $\omega=0$ with
$
\omega=\sum_{i=1}^na_idx_i
$
and the coefficients $a_i\in {\mathbb C}\{x_1,x_2,\ldots,x_n\}$ are without common factor.
\begin{remark}
If $\mathcal G$ is a germ of foliation in $({\mathbb C}^2,0)$ and $L=(x=0)$ is an invariant line, then $\mathcal G$ is given by $\alpha=0$ where $\alpha=a(x,y)dx+xb(x,y)dy$ and $a,xb$ are without common factors. The {\em restricted multiplicity} $\mu({\mathcal G}, L;0)$ is the order $\nu_y(a(0,y))$. Moreover, when ${\mathcal G}$ is a foliation on the projective plane ${\mathbb P}^2_{\mathbb C}$ and $L$ is a straight line invariant by $\mathcal G$ we have that
\begin{equation}
\label{eq:gradoinvariante}
d+1=\sum_{q\in L}\mu({\mathcal G}, L;q).
\end{equation}
where $d$ is {\em the degree} of ${\mathcal G}$ (see \cite{Can-C-D} for more details).
\end{remark}

We will do frequently arguments by induction on the {\em height $h(p)$} of a point $p\in \sigma_k^{-1}(0)$ with respect to the sequence $\mathcal S$. This number is defined by
\begin{equation*}
\label{eq:height}
h(p)=\sharp\{k'\geq k; p\in \pi_{k'k}(Y_{k'})\}.
\end{equation*}
\begin{lemma}
 \label{lema:CH-dicriticos}
 Assume that $Y_{b-1}=\{p\}$ is a single point and $\pi_b$ is a dicritical blowing-up. Let $\mathcal G$ be the restriction of ${\mathcal F}_b$ to the projective plane $E^b_b$ and let $d$ be the degree of $\mathcal G$. We have $d+1=\nu_p{\mathcal F}$ and moreover 
\begin{enumerate}
\item A point $q\in E^b_b$ is a pre-simple CH-corner for ${\mathcal F}_b, E^b$ if and only if it is a pre-simple CH-corner for ${\mathcal G}, D$, where $D=\cup_{i=1}^{b-1}E^b_b\cap E^b_i$.
    \item If $p$ is an s-trace point that belongs to a non dicritical component $E^{b-1}_i$ of $E^{b-1}$, there is an s-trace  point $q\in E^b_i\cap E^b_b$.
\end{enumerate}\end{lemma}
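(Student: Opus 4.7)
\emph{Degree formula.} Choose local coordinates $(x,y,z)$ at $p$ in which $\mathcal F_{b-1}$ is defined by $\omega=a\,dx+b\,dy+c\,dz$ with $a,b,c\in\mathbb C\{x,y,z\}$ coprime and of minimum order $m=\nu_p\mathcal F$. Because $\pi_b$ is dicritical, the initial forms $a_m,b_m,c_m$ must satisfy the Euler relation $xa_m+yb_m+zc_m=0$. A direct calculation in the chart $x=x'$, $y=x'y'$, $z=x'z'$ shows that $\pi_b^*\omega$ is exactly divisible by $(x')^{m+1}$, and that after removing this factor the pullback of the defining form of $\mathcal F_b$ to $E^b_b=(x'=0)$ is $b_m(1,y',z')\,dy'+c_m(1,y',z')\,dz'$. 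Hence the homogeneous form $W=a_m\,dx+b_m\,dy+c_m\,dz$ represents $\mathcal G$ globally on $E^b_b\cong\mathbb P^2_{\mathbb C}$. By Remark \ref{rk:gradodicritico}, the local coefficients of $\mathcal G$ are coprime in every affine chart of $E^b_b$; a short check in the three standard charts of $\mathbb P^2$ rules out every possible non-trivial homogeneous factor of $a_m,b_m,c_m$. Consequently $W$ is a reduced twisted 1-form with $i_R W=0$, which identifies $\mathcal G$ with a foliation on $\mathbb P^2$ of degree $d=m-1=\nu_p\mathcal F-1$.

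\emph{Part (1).} Fix $q\in E^b_b$. Since $E^b_b$ is dicritical through $q$, a pre-simple CH-corner model for $\mathcal F_b,E^b$ at $q$ must have $E^b_b=(x_r=0)$ with $r>\tau$, so that $\omega=\sum_{i=1}^\tau(\lambda_i+b_i(x_1,\ldots,x_\tau))\,dx_i/x_i$ involves neither $x_r$ nor $dx_r$; the pullback to $E^b_b$ is then literally the same expression, which is a pre-simple CH-corner form for $\mathcal G,D$ of the same dimensional type $\tau\leq 2$. Conversely, if $q$ is a pre-simple CH-corner for $\mathcal G,D$ of dimensional type $\tau'\leq 1$, then $\mathcal F_b$ is nonsingular at $q$ and has normal crossings with $E^b$; if $\tau'=2$, the two invariant lines of $\mathcal G$ through $q$ come from two transversal invariant components of $E^b$ through $q$, and their intersection curve $\ell$ is contained in $\mbox{Sing}\,\mathcal F_b$ by the standard computation on pairs of invariant hypersurfaces. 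Writing $\Omega=w_2\tilde A\,dw_1+w_1\tilde B\,dw_2+w_1w_2\tilde C\,dt$ with $E^b_b=(t=0)$, matching $\iota^*\Omega$ with the pre-simple model of $\mathcal G$, and then exploiting integrability $\Omega\wedge d\Omega=0$ together with the nonsingular vector field tangent to $\mathcal F_b$ and transverse to $E^b_b$ supplied by the dicritical character of $E^b_b$, one removes the $dt$-term and the $t$-dependence of $\tilde A,\tilde B$ by a holomorphic change of coordinates, recovering the pre-simple CH-corner form for $\mathcal F_b,E^b$. This simultaneous elimination is the principal technical obstacle.

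\emph{Part (2).} Suppose $p$ is an s-trace point lying on an invariant component $E^{b-1}_i$. The restriction $\mathcal H$ of $\mathcal F_{b-1}$ to the invariant surface $E^{b-1}_i$ is a two-dimensional foliation adapted to the divisor $D^{b-1}=E^{b-1}_i\cap\bigcup_{j\neq i}E^{b-1}_j$; by the same kind of argument as in part (1), now sectioning by an invariant hypersurface rather than a dicritical one, the pre-simple CH-corner property in three dimensions restricts to the two-dimensional one, so $p$ is an s-trace point for $\mathcal H,D^{b-1}$. The morphism $\pi_b$ restricts on $E^b_i$ to the blow-up of the surface $E^{b-1}_i$ at $p$, whose exceptional line is exactly $E^b_i\cap E^b_b$. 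The two-dimensional analogue of Proposition \ref{pro:notchcorners} applied to $\mathcal H$ produces a point $q$ on this exceptional line that is not a pre-simple CH-corner for the transformed pair; by the restriction-invariance of the pre-simple CH-corner property, this $q$ is an s-trace point for $\mathcal F_b,E^b$ lying in $E^b_i\cap E^b_b$, as required.
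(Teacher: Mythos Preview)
Your treatment of the degree formula and of part~(1) is essentially the paper's argument: write $\omega=a\,dx+b\,dy+c\,dz$, use the Euler relation $xa_m+yb_m+zc_m=0$ coming from dicriticalness, invoke Remark~\ref{rk:gradodicritico} to see that $A_r,B_r,C_r$ have no common factor, and for the converse in~(1) trivialize using a tangent vector field transverse to the dicritical divisor $E^b_b$. The paper writes this vector field explicitly as $\xi=c\,x\,\partial/\partial x-a\,\partial/\partial z$, but the mechanism is the one you describe.

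Part~(2) is where your route diverges from the paper's, and there is a genuine gap. You restrict $\mathcal F_{b-1}$ to the \emph{invariant} surface $E^{b-1}_i$, obtaining a two-dimensional foliation $\mathcal H$, and then claim that ``the pre-simple CH-corner property in three dimensions restricts to the two-dimensional one, so $p$ is an s-trace point for $\mathcal H,D^{b-1}$.'' But restriction only gives you the implication \emph{pre-simple for $\mathcal F_{b-1}\Rightarrow$ pre-simple for $\mathcal H$}; to deduce that $p$ is s-trace for $\mathcal H$ from $p$ being s-trace for $\mathcal F_{b-1}$ you need the \emph{converse}, and your appeal to ``the same kind of argument as in part~(1)'' does not supply it. In part~(1) the converse worked because $E^b_b$ was dicritical, which furnished a nonsingular vector field tangent to $\mathcal F_b$ and transverse to $E^b_b$; for an invariant component there is no such transverse trivializing field, and indeed one must use the CH hypothesis in a nontrivial way (as at the end of the proof of Proposition~\ref{pro:notchcorners}) to rule out that the $dx/x$-coefficient vanishes at $p$. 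You would also need to check that $\mathcal H$ is itself a CH-foliation before invoking a two-dimensional analogue of Proposition~\ref{pro:notchcorners}.

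The paper avoids all of this by staying on the dicritical side: it works with $\mathcal G$ on $E^b_b$ (where part~(1) gives a genuine equivalence), observes that the line $L_i=E^b_i\cap E^b_b$ is invariant for $\mathcal G$, and uses the index formula $d+1=\sum_{q\in L_i}\mu(\mathcal G,L_i;q)$ to count singularities on $L_i$. A short case analysis on $e(E^{b-1}_{\mbox{\small inv}},p)\in\{1,2,3\}$, combined with Proposition~\ref{pro:notchcorners} to exclude the possibility that all singularities on the configuration of lines are pre-simple corners, forces an s-trace point on $L_i$.
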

\begin{proof} Let ${\mathcal F}_{b-1}$ be locally given at $p$ by $\omega=0$ with $\omega=adx+bdy+cdz$, where $a,b,c\in {\mathbb C}\{x,y,z\}$ are without common factor. Put $r=\nu_p{\mathcal F}_{b-1}$ and write
$$
a=A_r+A_{r+1}+\cdots;\; b=B_r+B_{r+1}+\cdots ;\; c=C_r+C_{r+1}+\cdots
$$
the decomposition into homogeneous components. Since $\pi_b$ is dicritical, we have $XA_r+YB_r+ZC_r=0$. The foliation ${\mathcal G}$ is defined in the projective space $E^b_b$ by the global 1-form
$$
W=A_rdX+B_rdY+C_rdZ
$$
and in view of Remark \ref{rk:gradodicritico} the coefficients $A_r,B_r,C_r$ do not have common factor. This means that $d=r-1$.

Now, let $q\in E^b_b$ be a pre-simple CH-corner for ${\mathcal F}_b, E^b$. There are local coordinates $x,y,z$ at $q$ such that $E^b_b=(z=0)$ and we have one of the following cases
\begin{enumerate}
\item The point $q$ is non singular and the coordinates may be chosen such that ${\mathcal F}=(dx=0)$, $E^b_{\mbox{\small inv}}=(x=0)$ and $(z=0)\subset E^{b}_{\mbox{\small dic}}\subset (yz=0)$ locally at $q$.
    \item The point $q$ is  singular and the coordinates may be chosen such that
    $${\mathcal F}=\{(\lambda+f(x,y)) ydx+(\mu+g(x,y))x dy=0\},\; \lambda\mu\ne 0, \;\nu_q(f,g)\geq 1,$$ with
     $E^b_{\mbox{\small inv}}=(xy=0)$ and $(z=0)=E^{b}_{\mbox{\small dic}}$ locally at $q$.
\end{enumerate}
In both cases, we see that $q$ is a pre-simple corner for ${\mathcal G}, D$. Conversely, assume that $q$ is a pre-simple corner for ${\mathcal G}, D$. Take notations as in Equation \ref{eq:restriccion}, with $E^b_b=(z=0)$. If $q$ is non singular for $\mathcal G$, it is also non singular for ${\mathcal F}_b$ and we deduce that it is a pre-simple CH-corner just by looking at the positions of the divisors.  If it is singular, then $E^b=(xyz=0)$ locally at $q$ and
$$
{\mathcal G}=\{
a(x,y,0)dx+b(x,y,0)dy=(\lambda+f(x,y)) ydx+(\mu+g(x,y))x dy=0\}.
$$
The vector field $\xi=c(x,y,z)x{\partial/\partial x}-a(x,y,z){\partial/\partial z}$ trivializes the foliation ${\mathcal F}_b$ and we get a pre-simple CH-corner for ${\mathcal F}_b,E^b$.

Finally, assume that $p$ is an s-trace point belonging to a non dicritical component $E^{b-1}_i$ of $E^{b-1}$. We have three cases to consider:

{\em Case $e(E^{b-1}_{\mbox{\small inv}},p)=1$.}  The straight line $L=E^b_b\cap E^{b}_{\mbox{\small inv}}=E^b_b\cap E^{b}_{i}$ is  invariant by $\mathcal G$. By Equation \ref{eq:gradoinvariante} we have that $L$ contains $d+1=r\geq 1$ singular points of $\mathcal G$. Moreover, any point $q\in \mbox{Sing}{\mathcal G}\cap L$ is an s-trace point.

{\em Case $e(E^{b-1}_{\mbox{\small inv}},p)=2$.} We have  $E^b_b\cap E^{b}_{\mbox{\small inv}}=L_i\cup L_j$ with $ L_i= E^b_b\cap E^{b}_{i}$, $L_j= E^b_b\cap E^{b}_{j}$, where $E^{b-1}_i$ and $E^{b-1}_j$ are the two non dicritical components of $E^{b-1}$ containing $p$. Note that $L_i$ and $L_j$ are invariant lines for $\mathcal G$. If $d=0$, the intersection point $q_0\in L_i\cap L_j$ is the only singular point of $\mathcal G$, moreover it is a pre-simple CH-corner; this implies in view of Proposition \ref{pro:notchcorners} that $p$ is a pre-simple CH-corner, contradiction. Hence $d\geq 1$, in this case by Equation \ref{eq:gradoinvariante} we find at least one singular point in $L_i$ (and also in $L_j$) that is an s-trace point. To be precise, if $q_0$ is a pre-simple CH-corner, we have that  $\mu({\mathcal G}, L_i;q_0)=1$ and hence there is another singular point $q'$ in $L_i$ that must be an s-trace point.

{\em Case $e(E^{b-1}_{\mbox{\small inv}},p)=3$.} We have $E^b_b\cap E^{b}_{\mbox{\small inv}}=L_i\cup L_j\cup L_k$ with $ L_i= E^b_b\cap E^{b}_{i}$, $L_j= E^b_b\cap E^{b}_{j}$, $L_k= E^b_b\cap E^{b}_{k}$,
where $E^{b-1}_i$, $E^{b-1}_j$ and $E^{b-1}_k$ are the three non dicritical components of $E^{b-1}$ that contain $p$.  Note as before that $L_i$, $L_j$ and $L_k$ are invariant lines for $\mathcal G$ and there are three singular points $q_{ij}, q_{ik}, q_{jk}$ corresponding to the respective intersections of two lines. This implies that $d\geq 1$. If $d=1$ the points $q_{ij}, q_{ik}, q_{jk}$ are the only singular points of $\mathcal G$ and they are pre-simple CH-corners; thus $p$ must be a pre-simple CH-corner, contradiction. If $d\geq 2$, we find as before at least a point in $L_i$ (and also in $L_j$, $L_k$) that is an s-trace point.
\end{proof}
\begin{lemma}
 \label{lema:curvaesquina}
 Assume that $Y_{b-1}=E^{b-1}_i\cap E^{b-1}_j$ is a  generically pre-simple CH-corner curve and let $p$ be the intersection point $Y_{b-1}\cap \sigma_{b-1}^{-1}(0)$. Let us consider the point $\{q'_i\}=\pi_b^{-1}(p)\cap E^b_i$.  If  $p$ is an s-trace point, then $q'_i$ is also an s-trace point.
\end{lemma}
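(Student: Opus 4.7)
The plan is to set up local coordinates $(x,y,z)$ at $p$ with $E^{b-1}_i=\{x=0\}$, $E^{b-1}_j=\{y=0\}$ and $Y_{b-1}=\{x=y=0\}$. Invariance of the two divisor components forces a local equation for $\mathcal{F}_{b-1}$ of the shape
$$\omega=Ay\,dx+Bx\,dy+Cxy\,dz=xy\,\Omega,\qquad \Omega=A\,\tfrac{dx}{x}+B\,\tfrac{dy}{y}+C\,dz,$$
with $A,B,C\in\mathbb{C}\{x,y,z\}$ having no common factor. Write $\alpha(z):=A(0,0,z)$ and $\beta(z):=B(0,0,z)$. The first step is to reformulate the hypotheses in terms of $\alpha,\beta$: a direct inspection of the logarithmic shape of $\Omega$ shows that a point $q\in Y_{b-1}$ is a pre-simple CH-corner for $\mathcal{F}_{b-1},E^{b-1}$ exactly when $A(q)\neq 0$ and $B(q)\neq 0$. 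Indeed, when $A(q)\neq 0$ the vector field $\partial_{z}-(C/A)\,x\,\partial_{x}$ is nonsingular and tangent to $\mathcal{F}_{b-1}$, so the dimensional type at $q$ is two and one can straighten it out to recover the normal form of Definition~\ref{def:presimplecorner}; conversely a vanishing residue would violate $\prod\lambda_{k}\neq 0$. Thus the hypothesis ``$Y_{b-1}$ generically pre-simple CH-corner'' amounts to $\alpha,\beta\not\equiv 0$, while the hypothesis ``$p$ is s-trace'' (which automatically places $p$ in $\mbox{Sing}(\mathcal{F}_{b-1},E^{b-1})$ since $p\in Y_{b-1}$) means $\alpha(0)=0$ or $\beta(0)=0$.

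The decisive step is to promote this disjunction to the conjunction $\alpha(0)=\beta(0)=0$ using integrability. Computing $\omega\wedge d\omega$ directly and factoring out $xy$ yields the identity
$$(BA_{z}-AB_{z})+(AC_{y}-CA_{y})\,y+(CB_{x}-BC_{x})\,x\equiv 0.$$
Restricting to $x=y=0$ gives $\beta\alpha'-\alpha\beta'\equiv 0$, so $(\alpha/\beta)'\equiv 0$ on $\{\beta\neq 0\}$; since neither function vanishes identically there exists $c\in\mathbb{C}^{*}$ with $\alpha=c\beta$. The s-trace hypothesis then forces both $\alpha(0)=0$ and $\beta(0)=0$.

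The last step is to read off the conclusion at $q'_{i}$. Using the affine chart of $M_{b}$ given by $x=x'y'$, $y=y'$ (with $z$ unchanged), one has $E^{b}_{i}=\{x'=0\}$, $E^{b}_{b}=\{y'=0\}$, and $q'_{i}=(0,0,0)$. A direct substitution in $\Omega$ produces
$$\pi_{b}^{*}\Omega=A\,\frac{dx'}{x'}+(A+B)\,\frac{dy'}{y'}+C\,dz,$$
so the two logarithmic residues at $q'_{i}$ are $A(p)=\alpha(0)=0$ along $E^{b}_{i}$ and $(A+B)(p)=\alpha(0)+\beta(0)=0$ along $E^{b}_{b}$. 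With both residues vanishing, $q'_{i}$ cannot admit a pre-simple CH-corner normal form of Definition~\ref{def:presimplecorner}: in the non-dicritical sub-case ($c\neq -1$) both $E^{b}_{i}$ and $E^{b}_{b}$ are invariant but the would-be eigenvalues at $q'_{i}$ along them both vanish; in the dicritical sub-case ($c=-1$) only $E^{b}_{i}$ is invariant, so $e(E^{b}_{\mbox{\small inv}};q'_{i})=1<\tau$, while $q'_{i}$ remains singular of dimensional type $\geq 2$. Since also $q'_{i}\in\mbox{Sing}(\mathcal{F}_{b},E^{b})$ (a divisor corner at which the foliation is singular), it is therefore an s-trace point.

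The main obstacle is the integrability step above. Without the relation $\alpha=c\beta$, the asymmetric sub-case ``$\alpha(0)\neq 0=\beta(0)$'' would remain open, and in that sub-case the residue $(A+B)(p)=\alpha(0)\neq 0$ along $E^{b}_{b}$ at $q'_{i}$ would be nonzero, making $q'_{i}$ a pre-simple CH-corner and breaking the lemma. The proportionality of $\alpha$ and $\beta$ along $Y_{b-1}$ obtained from $\omega\wedge d\omega=0$ is precisely what guarantees the simultaneous vanishing of both residues at $q'_{i}$.
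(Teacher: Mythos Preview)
Your proof is correct, and the core blow-up computation is the same as the paper's, but you reach the crucial simultaneous vanishing $\alpha(0)=\beta(0)=0$ by a different and in some sense cleaner route.

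The paper sets $E^{b-1}_i=\{y=0\}$, $E^{b-1}_j=\{x=0\}$, writes $a=\phi(z)+xf_1+yf_2$, $b=\psi(z)+xg_1+yg_2$, and argues that if exactly one of $\phi(0),\psi(0)$ vanished one could produce a saddle-node, contradicting the CH hypothesis (it refers back to the end of the proof of Proposition~\ref{pro:notchcorners}). You instead compute $\omega\wedge d\omega=0$ directly, obtain $\beta\alpha'-\alpha\beta'\equiv 0$ along $Y_{b-1}$, and conclude $\alpha=c\beta$ with $c\in\mathbb{C}^*$; this is precisely the content of Lemma~\ref{lema:integrabilidad} transplanted to this setting. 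Your argument therefore uses only integrability and not the CH hypothesis for this step, which is a mild strengthening. It also makes transparent why the dicritical sub-case is exactly $c=-1$.

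One small omission: you treat only the case $e(E^{b-1}_{\mbox{\small inv}},p)=2$, writing the third term as $C\,dz$. The paper also allows $e(E^{b-1}_{\mbox{\small inv}},p)=3$, replacing $C\,dz$ by $c\,dz/z$. Your integrability identity and the blow-up computation go through verbatim in that case (the $dz/z$ term is untouched by the substitution $x=x'y'$, $y=y'$), and the conclusion that the residue along $E^b_i$ vanishes at $q'_i$ still forces $q'_i$ to fail the pre-simple CH-corner normal form; but you should say so explicitly.
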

\begin{proof} Take local coordinates at $p$ such that $E^{b-1}_i=\{y=0\}$ and  $E^{b-1}_j=\{x=0\}$ and suppose that ${\mathcal F}_{b-1}$ is locally given at $p$ by $\omega=0$ where $\omega$ is the 1-form
$$
\omega=a(x,y,z)\frac{dx}{x}+b(x,y,z)\frac{dy}{y}+c(x,y,z)\frac{dz}{z^\epsilon}.
$$
If $e(E^{b-1}_{\mbox{\small inv}},p)=2$ we put $\epsilon=0$ and if  $e(E^{b-1}_{\mbox{\small inv}},p)=3$ we put $\epsilon =1$. Either way, $a,b,c$ are without common factors and in view of the hypothesis we have that
$$
a=\phi(z)+xf_1+yf_2; \; b=\psi(z)+xg_1+yg_2; \phi(z)\psi(z)\ne 0.
$$
Moreover, since $p$ is not a pre-simple CH-corner, we deduce that $\phi(0)=\psi(0)=0$, otherwise, we should contradict the CH character of the foliation ${\mathcal F}_{b-1}$ as in the proof of Proposition \ref{pro:notchcorners}.
In local coordinates $x,y'=y/x,z$ the foliation ${\mathcal F}_b$ is given at $q'_i$ by
$$
\omega=a'\frac{dx}{x^\delta}+b'\frac{dy'}{y'}+c'\frac{dz}{z^\epsilon}
$$
where $\delta=0$ in the dicritical case and $\delta=1$  if the blow-up is non dicritical. The coefficients
$a',b',c'\in {\mathbb C}\{x,y',z\}$ are without common factor and given by
$$
a'=x^{\delta-1}(a(x,xy',z)+b(x,xy',z));\; b'=b(x,xy',z);\;c'=c(x,xy',z).
$$
 We have that $b'(0,y',0)=0$ and thus $q'_i$ is not a pre-simple CH-point. Moreover, it is a singular point since the foliation ${\mathcal F}_b$ is locally given at $q'_i$ by the holomorphic form $\Omega=y'x^{\delta}z^{\epsilon}\omega$. Hence $q'_i$ is an s-trace point.
\end{proof}

\begin{proposition}
 \label{prop: singlocus}
 Let $p\in \sigma_k^{-1}(0)$  be an s-trace point for ${\mathcal F}_k,E^k$
  that belongs to a non dicritical component  $E^k_i$ of $E^k$.
There is an  s-trace curve $\Gamma$ such that
$
p\in\Gamma\subset E^k_i
$.
\end{proposition}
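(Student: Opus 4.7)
The plan is to proceed by induction on the height $h(p)$ with respect to the fixed RI-reduction $\mathcal S$, using Lemmas \ref{lema:CH-dicriticos} and \ref{lema:curvaesquina} as the key lifting tools. For the base case $h(p)=0$, the point $p$ is not met by any subsequent center, so $\rho_{k}$ restricts to a local isomorphism at $p$ and, since every point of $M$ is a simple CH-point for $\pi^*\mathcal F$ adapted to $E$, the point $p$ itself is a simple CH-point for $\mathcal F_{k},E^{k}$. Being an s-trace point lying on the invariant component $E^{k}_{i}$, it must in fact be a simple CH-trace point, and the explicit normal form of Definition \ref{def:simple} together with Remark \ref{rek:singularlocus} exhibits a coordinate axis $\Gamma\subset E^{k}_{i}$ through $p$ with $e(E^{k}_{\mbox{\small inv}},\Gamma)=1$; Remark \ref{rk:divisortrace}(2) identifies $\Gamma$ as the desired s-trace curve, which settles the base case.

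For the inductive step I would take $b-1\geq k$ to be the smallest index at which a later center meets $p$ and transfer the problem via the local isomorphism $\pi_{(b-1)k}$ to $M_{b-1}$, so without loss of generality $k=b-1$ and $p\in Y_{b-1}$. I then split according to the nature of $Y_{b-1}$. When $Y_{b-1}=\{p\}$ and $\pi_{b}$ is dicritical, Lemma \ref{lema:CH-dicriticos}(2) directly furnishes an s-trace point $q\in E^{b}_{i}\cap E^{b}_{b}$. When $Y_{b-1}$ is a generically pre-simple CH-corner curve with $E^{b-1}_{i}$ among the two invariant components defining it, Lemma \ref{lema:curvaesquina} furnishes an s-trace point $q'_{i}\in E^{b}_{i}\cap\pi_{b}^{-1}(p)$. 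When $Y_{b-1}$ is an s-trace curve contained in $E^{b-1}_{i}$ we are immediately done with $\Gamma=Y_{b-1}$. The remaining configurations---a non-dicritical point blow-up, an s-trace curve center meeting $E^{b-1}_{i}$ only transversally, and a corner curve $Y_{b-1}=E^{b-1}_{j}\cap E^{b-1}_{l}$ with $i\notin\{j,l\}$---will be handled by a parallel local analysis: I will study the invariant line $L=E^{b}_{i}\cap E^{b}_{b}$, count singular points on $L$ via Equation \ref{eq:gradoinvariante}, and invoke Proposition \ref{pro:notchcorners} to conclude that at least one such singular point must fail to be a pre-simple CH-corner, for otherwise $p$ itself would be one.

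In every case this produces an s-trace point $q\in E^{b}_{i}$ of strictly smaller height than $p$, so the inductive hypothesis applied in $M_{b}$ yields an s-trace curve $\Gamma'$ for $\mathcal F_{b},E^{b}$ with $q\in\Gamma'\subset E^{b}_{i}$. Its image $\Gamma=\pi_{b}(\Gamma')$ is a curve in $E^{b-1}_{i}$ containing $p$, and since $\pi_{b}$ restricts to an isomorphism off its center and $p$ is s-trace by hypothesis, every point of $\Gamma$ is an s-trace point for $\mathcal F_{b-1},E^{b-1}$. Hence $\Gamma$ is the required s-trace curve in $E^{k}_{i}$ through $p$, closing the induction.

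The main obstacle will be the systematic treatment of the configurations not directly covered by the two lemmas of the previous section: one must verify that in each such case the s-trace point produced above $p$ actually sits on the strict transform $E^{b}_{i}$, rather than merely somewhere in the fibre $\pi_{b}^{-1}(p)$ or on a different invariant component. This requires the careful analysis of local equations at $p$, analogous to the proofs of Lemmas \ref{lema:CH-dicriticos} and \ref{lema:curvaesquina}, exploiting both the CH hypothesis (to rule out saddle nodes in generic plane sections, as in Proposition \ref{pro:notchcorners}) and the fact that $E^{b-1}_{i}$ is invariant (so that $L=E^{b}_{i}\cap E^{b}_{b}$ is itself an invariant line of the projective foliation induced on $E^{b}_{b}$). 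The final projection step $\Gamma=\pi_{b}(\Gamma')$ is by contrast routine.
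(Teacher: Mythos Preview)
Your overall strategy matches the paper's---induction on $h(p)$, base case via the simple CH-trace normal form, and case analysis on the type of center $Y_{b-1}$---and the cases you delegate to Lemmas \ref{lema:CH-dicriticos} and \ref{lema:curvaesquina} are handled correctly.

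The gap is in the non-dicritical point blow-up case. You propose to locate an s-trace point on the line $L=E^{b}_{i}\cap E^{b}_{b}$ by counting singular points of ``the projective foliation induced on $E^{b}_{b}$'' via Equation \ref{eq:gradoinvariante}. But when $\pi_{b}$ is non-dicritical, $E^{b}_{b}$ is \emph{invariant} for $\mathcal F_{b}$: there is no induced codimension-one foliation $\mathcal G$ on $E^{b}_{b}\cong\mathbb P^{2}_{\mathbb C}$, and Equation \ref{eq:gradoinvariante} (which concerns a foliation on $\mathbb P^{2}_{\mathbb C}$ with an invariant line) simply does not apply. Proposition \ref{pro:notchcorners} by itself only guarantees a trace point somewhere in $\pi_{b}^{-1}(p)=E^{b}_{b}$, with no control over whether it lies on $L$.

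The paper closes this gap by a \emph{double} use of the induction hypothesis. First, take any trace point $q\in E^{b}_{b}$ supplied by Proposition \ref{pro:notchcorners} and apply the induction hypothesis with respect to the invariant component $E^{b}_{b}$ (not $E^{b}_{i}$): this yields a compact s-trace curve $\tilde\Gamma\subset E^{b}_{b}$. Since $E^{b}_{b}\cong\mathbb P^{2}_{\mathbb C}$, the curve $\tilde\Gamma$ must meet the projective line $E^{b}_{i}\cap E^{b}_{b}$ at some point $q'$, and $q'$ is automatically a trace point because it lies on $\tilde\Gamma$. Now apply the induction hypothesis a second time, at $q'$ with respect to $E^{b}_{i}$, to obtain $\Gamma'\subset E^{b}_{i}$ and project. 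This two-step maneuver is the missing idea; once you have it, your treatment of the transversal-curve cases (where $\pi_{b}^{-1}(p)\subset E^{b}_{i}$ already, so a single application of Proposition \ref{pro:notchcorners} and induction suffices) goes through as in the paper.
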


\begin{proof} We do induction on the height $h(p)$ of $p$. If $h(p)=0$, we are done, since $p$ is a simple CH-trace point. Assume that $h(p)\geq 1$. Let $b>k$ be the first index such that $p\in\pi_{k(b-1)}(Y_{b-1})$. We consider several cases.

{\em First case: the center $Y_{b-1}$ is a point $Y_{b-1}=\{p'\}$ and $\pi_{b}$ is non-dicritical}. We do the blow-up $\pi_{b}$ and by Proposition \ref{pro:notchcorners} there is a trace point $q\in \pi_{b}^{-1}(p')=E^{b}_{b}$. Since $E^{b}_{b}$ is compact (it is isomorphic to ${\mathbb P}^2_{\mathbb C}$) and invariant, we can
apply induction hypothesis to $q\in E_b^b$ to find a trace compact curve $\tilde\Gamma\subset E^{b}_{b}$. The curve $\tilde \Gamma$ intersects the projective line $E^{b}_i\cap E^{b}_{b}$ at least in a point $q'$ that must be a trace point.  We apply induction hypothesis to $q'\in E^b_i$ to find a trace curve  $\Gamma'\subset E^b_i$ such that $q'\in \Gamma'$ and we take $\Gamma=\pi_{kb}(\Gamma')$. (See Remark  \ref{rk:closednesstrace})

{\em Second case: $Y_{b-1}$ is a point $Y_{b-1}=\{p'\}$  and $\pi_{b}$ is a dicritical blow-up}. By Lemma \ref{lema:CH-dicriticos} we find a point $q'\in E^b_{i}$ that is a trace point and we proceed by induction as before.

 {\em Third case: $Y_{b-1}$ is a curve transversal to $E^{b-1}_i$}.  By Proposition \ref{pro:notchcorners}, there is a trace point $q$ in $\pi_b^{-1}(p')=E^{b}_b\cap E^b_i$, where $p'$ is the (only) point over $p$ such that $\pi_{k(b-1)}(p')=p$. We apply induction hypothesis to $q\in E^b_i$ at $q$ to obtain a trace curve $\Gamma'\subset E^b_i$ and we put $\Gamma=\pi_{kb}(\Gamma')$.

 The remaining situation is that $Y_{b-1}$ is a curve contained in $E^{b-1}_i$. If $Y_{b-1}$ is an s-trace curve, we are done by taking $\Gamma=\pi_{k(b-1)}(Y_{b-1})$. Otherwise we apply Lemma \ref{lema:curvaesquina} to proceed by induction.
 \end{proof}

\begin{proposition}
 \label{prop: singlocus2}
 Assume that the center $Y_{b-1}$ of $\pi_b$ is an s-trace curve. Then there is an s-trace curve $\Gamma\subset E^b_b$ such that $\pi_b(\Gamma)=Y_{b-1}$.
\end{proposition}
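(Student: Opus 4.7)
The plan is to work locally at a generic point of $Y_{b-1}$, exhibit by an explicit blow-up computation an s-trace curve in $E^b_b$ projecting onto an open dense subset of $Y_{b-1}$, and then globalize by taking an analytic closure, using that $\mathrm{STr}(\mathcal{F}_b, E^b)$ is closed (Remark \ref{rk:closednesstrace}).

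First I would localize at a generic point $p \in Y_{b-1}$. By Remark \ref{rk:points in curves}, outside a finite subset of points, every $p \in Y_{b-1}$ is a simple CH-trace point for $\mathcal{F}_{b-1}, E^{b-1}$ of dimensional type two, and since $Y_{b-1}$ is an s-trace curve, Remark \ref{rk:divisortrace}(2) gives $e(E^{b-1}_{\mathrm{inv}}, Y_{b-1}) = 1$. Let $E^{b-1}_j$ be the unique invariant component of $E^{b-1}$ containing $Y_{b-1}$. I would choose coordinates $(x_1, x_2, x_3)$ at $p$ with $E^{b-1}_j = (x_1 = 0)$, $Y_{b-1} = (x_1 = x_2 = 0)$, and
\[
\omega = (\lambda_1 + b_1(x_1, x_2))\frac{dx_1}{x_1} + (\lambda_2 + b_2(x_1, x_2))\frac{dx_2}{x_2}, \quad \lambda_1\lambda_2 \neq 0,\ b_i(0)=0.
\]
The non-resonance condition of Definition \ref{def:simple} applied to the multi-index $(1,1)$ yields $\lambda_1 + \lambda_2 \neq 0$, so $\pi_b$ is non-dicritical along $E^b_b$ near $p$.

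Next I would compute the blow-up in the affine chart $x_1 = u$, $x_2 = uv$ (with $x_3$ unchanged), where $E^b_b = (u = 0)$ and the strict transform $E^b_j$ is not visible. The pullback becomes
\[
\pi_b^*\omega \;=\; (\lambda_1 + \lambda_2 + \tilde b_1 + \tilde b_2)\frac{du}{u} \;+\; (\lambda_2 + \tilde b_2)\frac{dv}{v}, \qquad \tilde b_i(u,v) := b_i(u, uv),
\]
whose two logarithmic residues at $u=v=0$ are $\lambda_1+\lambda_2$ and $\lambda_2$, both non-zero. Hence the local adapted singular locus in this chart is the smooth curve $\{u = v = 0\}$ parametrized by $x_3$; every point $q$ on it satisfies $e(E^b_{\mathrm{inv}}, q) = 1$ (only $E^b_b$ passes through it) and is a simple CH-trace point of dimensional type two, so by Remark \ref{rk:divisortrace}(2) it is an s-trace point. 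Varying $p$ along the generic open dense part of $Y_{b-1}$, these local arcs glue into an analytic curve $\Gamma_0 \subset E^b_b \cap \mathrm{STr}(\mathcal{F}_b, E^b)$ such that $\pi_b|_{\Gamma_0}$ is an isomorphism onto a Zariski-open dense subset of $Y_{b-1}$.

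To conclude, I would take $\Gamma$ to be an irreducible component of the analytic closure of $\Gamma_0$ inside $E^b_b = \pi_b^{-1}(Y_{b-1})$ that dominates $Y_{b-1}$. Since $\mathrm{STr}(\mathcal{F}_b, E^b)$ is a closed analytic subset (Remark \ref{rk:closednesstrace}) and contains $\Gamma_0$, it contains $\Gamma$, so $\Gamma$ is an s-trace curve; properness of $\pi_b$ together with the density of $\pi_b(\Gamma_0)$ in the irreducible curve $Y_{b-1}$ forces $\pi_b(\Gamma) = Y_{b-1}$. The main obstacle I anticipate is the bookkeeping at the finitely many non-generic points of $Y_{b-1}$, namely the points of dimensional type three and the points where other components of $E^{b-1}$ (possibly dicritical) meet $Y_{b-1}$: one must check that the s-trace property survives analytic closure and that no irreducible component of $\overline{\Gamma_0}$ degenerates to a fiber of $\pi_b|_{E^b_b}$. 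Both issues are handled precisely by the closedness of $\mathrm{STr}(\mathcal{F}_b, E^b)$ and the properness of $\pi_b$.
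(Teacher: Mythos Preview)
Your overall strategy coincides with the paper's: work at a generic point of $Y_{b-1}$, produce an s-trace point in the fibre $\pi_b^{-1}(p)$, and then let these points sweep out an s-trace curve by the closedness of $\mathrm{STr}(\mathcal F_b,E^b)$. The divergence is in \emph{how} you produce that s-trace point. The paper simply applies the contrapositive of Proposition~\ref{pro:notchcorners}: since every point of an s-trace curve fails to be a pre-simple CH-corner, some point of each fibre $\pi_b^{-1}(p)\cap\mathrm{Sing}(\mathcal F_b,E^b)$ must also fail to be one, i.e.\ is an s-trace point. No simplicity hypothesis on $p$ is needed.

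You instead assume that the generic $p\in Y_{b-1}$ is already a \emph{simple} CH-trace point of dimensional type two and compute in the resulting normal form. This is the gap. Remark~\ref{rk:points in curves} only controls points of $\Gamma\cap\sigma_k^{-1}(0)$; for the non-compact centre $Y_{b-1}$ that intersection is a single point (Property~(2) of the RI-reduction), so the remark says nothing about generic points of $Y_{b-1}$. Nothing in the definition of an RI-reduction forces the generic point of a one-dimensional centre to be simple---indeed, one typically blows up such a curve precisely because its generic transversal section is not yet reduced. In that situation your normal form for $\omega$ is unavailable and the explicit chart computation does not get off the ground. The repair is exactly the paper's move: drop the simplicity assumption and invoke Proposition~\ref{pro:notchcorners}, whose hypothesis (failure to be a pre-simple CH-corner) holds at \emph{every} point of an s-trace curve by definition. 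Your globalization step via closedness and properness is then fine as written.
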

\begin{proof} It is enough to look at the generic point of $Y_{b-1}$ and to apply Proposition \ref{pro:notchcorners}. In this way we find at least one s-trace point over each generic point of $Y_{b-1}$ and thus we necessarily have at least an s-trace curve as stated.
\end{proof}

 \section{Nodal components for RICH-Foliations}
 \label{sec:nodalcomponents}
  Consider a RICH-foliation $\mathcal F$ in $({\mathbb C}^3,0)$ and fix an RI-reduction of singularities $\pi$ as in Equation \ref{eq:sucesionreduccion}. Take a nodal component $\mathcal N$ of $\pi^*{\mathcal F}, E$, where $E=E^N$ is the exceptional divisor of $\pi$. In this section we prepare the proof of Theorem \ref{teo:mainIII} by giving a list of structural properties of $\mathcal N$ at intermediate steps of the reduction of singularities assuming that $\mathcal N$ is compact and  does not intersect the union $E_{\mbox{\small dic}}$ of the dicritical components of $E$. In the next section we will find a contradiction with these properties.

 For any $0\leq k\leq N$, let us denote ${\mathcal N}_k=\rho_k({\mathcal N})$. We have that  ${\mathcal N}_k\subset \sigma_k^{-1}(0)$ and hence ${\mathcal N}_k$ is a connected and compact analytic subset of $E^k$. We have two possibilities: either ${\mathcal N}_k$ is a single point (in this case we put $s_k=0$) or it is a finite union of $s_k\geq 1$ compact irreducible analytic curves ${\mathcal N}_k=\Gamma^k_1\cup \Gamma^k_2\cup\cdots\cup\Gamma^k_{s_k}$. Let us remark that the curves $\Gamma^k_j\subset \sigma_k^{-1}(0)$ will never be used as a center of blow-up in the reduction of singularities. This implies that the generic points of $\Gamma^k_j$ are CH-simple for ${\mathcal F}_k, E^k$ and  only finitely many points in  $\Gamma^k_j$ will be modified by subsequent blow-ups. In particular ${\mathcal N}_{k+1}$ has the form
$$
{\mathcal N}_{k+1}=\Gamma^{k+1}_1\cup \Gamma^{k+1}_2\cup\cdots\cup\Gamma^{k+1}_{s_{k+1}}
$$
where $s_{k+1}\geq s_k$ and for each $1\leq j\leq s_k$ the curve $\Gamma^{k+1}_j$ is the strict transform of $\Gamma^k_j$ by $\pi_{k+1}$. The {\em date of birth} $b({\mathcal N})$ of $\mathcal N$ is the index such that
$
s_k=0
$
if $k<b({\mathcal N})$ and $s_{b({\mathcal N})}\geq 1$. Note that $1\leq b({\mathcal N})\leq N$.

We will give a list of results about the local behavior of ${\mathcal N}_k$  when $k\geq b({\mathcal N})$.

The following Lemma \ref{lema:cornoddic} shows that ${\mathcal N}_k$  has a behavior similar to ${\mathcal N}={\mathcal N}_N$ concerning the corners of the exceptional divisor.
\begin{lemma}[Non dicriticalness and nodality at corners]
\label{lema:cornoddic} Assume that
$k\geq b({\mathcal N})$.  Let
$p\in {\mathcal N}_k $ be locally the intersection of three components $E^k_i$, $ E^k_j$ and $E^k_\ell$ of $E^k$ and suppose that
$
E^k_i\cap E^k_j\subset {\mathcal N}^k
$. Then $E^k_i$, $ E^k_j$ and $E^k_\ell$ are invariant for ${\mathcal F}_k$ and
\begin{equation*}
E^k_i\cap E^k_\ell\subset {\mathcal N}_k \Leftrightarrow
E^k_\ell\cap E^k_j\not\subset {\mathcal N}_k.
\end{equation*}
(Equivalently, we have $E^k_i\cap E^k_\ell\not\subset {\mathcal N}_k \Leftrightarrow
E^k_\ell\cap E^k_j\subset {\mathcal N}_k$).
\end{lemma}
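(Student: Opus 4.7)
The plan is to establish the two assertions of the lemma in turn: first the invariance of all three divisor components $E^k_i,E^k_j,E^k_\ell$, and then the exclusive disjunction by a residue analysis. For the invariance of $E^k_i$ and $E^k_j$, I would take a generic point $q$ of the component of $E^k_i\cap E^k_j$ containing $p$; since $E^k_i\cap E^k_j\subset\mathcal N_k=\rho_k(\mathcal N)$ and $\rho_k$ is a local isomorphism at the generic points of this curve, such a $q$ is itself a nodal simple CH-point of $({\mathcal F}_k,E^k)$ of dimensional type two. By the local normal form of a nodal simple CH-point (Remark \ref{rk:linealizacionnodal}), the two divisor components through $q$ are invariant coordinate hyperplanes, so $E^k_i$ and $E^k_j$ are invariant by analyticity along the whole component.

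For the invariance of $E^k_\ell$, suppose otherwise, so its strict transform $\tilde E^k_\ell\subset M_N$ is dicritical. Writing the local model $\omega=(\lambda_1+b_1)dx/x+(\lambda_2+b_2)dy/y$ with $E^k_\ell=\{z=0\}$, $E^k_i=\{x=0\}$, $E^k_j=\{y=0\}$, and $r:=|\lambda_1/\lambda_2|\ne 1$ (non-resonance), I analyze the next blow-up over $p$ in the RI-chain: if none touches $p$, then $p$ itself is a nodal simple CH-point at level $N$ lying on $\tilde E^k_\ell$, directly contradicting $\mathcal N\cap E^N_{\mbox{\small dic}}=\emptyset$. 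Otherwise the center is either $\{p\}$ or the compact component of $E^k_i\cap E^k_j$ through $p$; an explicit chart computation shows that the preimage of $p$ at level $k+1$ contains a triple-intersection point lying on the strict transform of $E^k_\ell$ whose residues are $(\lambda_1+\lambda_2,\lambda_2)$ or $(\lambda_1,\lambda_1+\lambda_2)$, and this point is nodal in the respective range $r>1$ or $r<1$. Iterating through the remaining finitely many blow-ups produces a nodal point of $\mathcal N$ on a dicritical component of $E^N$, the required contradiction.

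Having established that all three components are invariant, $p$ is a pre-simple CH-corner of dimensional type three (three invariant divisor components force $\tau=3$) with residues $\lambda_i,\lambda_j,\lambda_\ell$. Picking any $p'\in\mathcal N$ over $p$, the fact that $p'$ is a nodal simple CH-point imposes, by the blow-up residue formulas (or directly if $p'=p$), that all three pairwise ratios $\lambda_i/\lambda_j,\lambda_i/\lambda_\ell,\lambda_j/\lambda_\ell$ are real. Writing $\lambda_i=-r\lambda_j$ with $r>0$ and $\lambda_j/\lambda_\ell=s\in\mathbb R^*$, we get $\lambda_i/\lambda_\ell=-rs$ and $\lambda_j/\lambda_\ell=s$, so exactly one of $\lambda_i/\lambda_\ell$ and $\lambda_j/\lambda_\ell$ is negative. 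By Remark \ref{rk:nodalofdimthree}, exactly one of the curves $E^k_i\cap E^k_\ell$ and $E^k_j\cap E^k_\ell$ is therefore generically nodal, and by connectedness it lies in the nodal component $\mathcal N$ and hence in $\mathcal N_k$, while the other does not, proving the equivalence.

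The main obstacle is the invariance of $E^k_\ell$: tracing through the possible subsequent blow-ups of the RI-reduction over $p$ and verifying in each sub-case that some nodal point of $\mathcal N$ lands on the strict transform of the putative dicritical $E^k_\ell$ requires the careful residue and chart analysis sketched above; the computation handles the first blow-up step cleanly, but extending it to arbitrary chains in the RI-reduction is the core technical point.
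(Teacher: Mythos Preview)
Your argument has a genuine gap that runs through both the invariance of $E^k_\ell$ and the equivalence. You repeatedly assume that $p$ is (or behaves like) a pre-simple CH-corner: you write a local model $\omega=(\lambda_1+b_1)dx/x+(\lambda_2+b_2)dy/y$ at $p$, and later you speak of residues $\lambda_i,\lambda_j,\lambda_\ell$ at $p$ and read off the signs of their ratios. But nothing in the hypotheses guarantees this. The point $p\in\mathcal N_k$ may have $h(p)>0$, meaning it is \emph{not} yet simple for ${\mathcal F}_k,E^k$; the foliation at $p$ can be given by an arbitrary $\omega=a\,dx/x+b\,dy/y+c\,dz$ (or worse), with no well-defined residual vector. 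Your sign analysis ``exactly one of $\lambda_i/\lambda_\ell$ and $\lambda_j/\lambda_\ell$ is negative'' therefore has no object to apply to. The same problem undermines your tracking argument for $E^k_\ell$: the ``residue formulas under blow-up'' you invoke only make sense once you already know the point is a pre-simple CH-corner.

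There is also a secondary error in your case analysis for the next blow-up over $p$: you list only $\{p\}$ and the compact component of $E^k_i\cap E^k_j$ as possible centers. The latter is never a center in an RI-reduction (compact curves are never centers, cf.\ Remark~\ref{rk:firstpropertiesS}), but you omit the genuine possibilities $Y_k=E^k_i\cap E^k_\ell$ or $Y_k=E^k_j\cap E^k_\ell$, which are germs of (non-compact) curves through $p$. These cases are exactly what the paper's proof handles in its case~(2), and they require a different argument (using that $E^{b+1}_{b+1}\cap E^{b+1}_i$ is non-compact, hence automatically not in $\mathcal N_{b+1}$). The paper's proof proceeds by induction on the height $h(p)$: the base case $h(p)=0$ is precisely where $p$ \emph{is} a simple CH-corner and your residue-and-sign analysis becomes legitimate; for $h(p)\geq 1$ one passes to the first blow-up touching $p$ and applies the induction hypothesis at the triple points appearing in $\pi_b^{-1}(p)$, carefully distinguishing point and curve centers. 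Your first paragraph (invariance of $E^k_i,E^k_j$ via generic points of the curve) is fine and is essentially the $h=0$ observation, but the rest needs to be rebuilt around this height induction.
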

\begin{proof} We do induction on the height $h(p)$ of $p$.  If $h(p)=0$, the point $p$ is a simple CH-corner for ${\mathcal F}_k,E^k$ of nodal type that will not be modified by further blow-ups, so we can think locally at $p$ as in the case $k=N$. Since $\mathcal N$ does not intersect $E^N_{\mbox{\small dic}}$ the three components $E^k_i$, $ E^k_j$ and $E^k_\ell$ are invariant for ${\mathcal F}_k$ and $p$ is a simple CH-corner of dimensional type three.  The second assertion of the Lemma is a direct consequence of the observations in Remark
\ref{rk:transicionnodal}.

Now, assume that $h=h(p)\geq 1$. Let $b\geq k$ be the first index such that $p_b\in Y_b$, where $\pi_{bk}(p_b)=p$. Note that the local situation at $p_b$ is exactly the same one as the local situation at $p$. Let us consider the blow-up $\pi_{b+1}$. Let us note that $Y_b\ne E^b_i\cap E^b_j$, since the fact that $E^b_i\cap E^b_j\subset {\mathcal N}_b$ implies that $E^b_i\cap E^b_j$ is a compact curve and we do not use compact curves as centers in view of Remark \ref{rk:firstpropertiesS}.
 First, we see that $\pi_{b+1}$ is a non dicritical blow-up (that is, the component $E^{b+1}_{b+1}$ is invariant); otherwise, we apply induction hypothesis to the intersection point $p'$ in $E^{b+1}_i\cap E^{b+1}_j\cap E^{b+1}_{b+1}$ that is a point in ${\mathcal N}_{b+1}$ such that $E^{b+1}_i\cap E^{b+1}_j\subset {\mathcal N}_{b+1}$. Then we have the next possibilities to consider
 \begin{enumerate}
 \item The center of $\pi_{b+1}$ is a point $Y_b=\{p_b\}$.
 \item The center of $\pi_{b+1}$ is a point is the curve $Y_b=E^b_i\cap E^b_\ell$.
 \item The center of $\pi_{b+1}$ is a point is the curve $Y_b=E^b_j\cap E^b_\ell$.
 \end{enumerate}
The case (3) is like the case (2) just by interchanging the roles of the indices $i,j$.
 Consider the case (1). Define the points $p'_\ell$, $p'_j$ and $p'_i$ by
 \begin{eqnarray*}
 \{p'_\ell\}&=&E^{b+1}_i\cap E^{b+1}_j\cap E^{b+1}_{b+1};\\
 \{p'_j\}&=&E^{b+1}_i\cap E^{b+1}_\ell\cap E^{b+1}_{b+1};\\
 \{p'_i\}&=&E^{b+1}_j\cap E^{b+1}_\ell\cap E^{b+1}_{b+1}.
 \end{eqnarray*}
We apply induction hypothesis at $p'_\ell$ to see that $E^{b+1}_i$ and $E^{b+1}_j$ are invariant components of $E^{b+1}$, hence $E^{b}_i$ and $E^{b}_j$ are invariant components of $E^{b}$. Also by induction hypothesis applied at $p'_\ell$ one of the following properties holds
\begin{enumerate}
 \item[i)] $E^{b+1}_i\cap E^{b+1}_{b+1}\subset {\mathcal N}_{b+1}$ and  $E^{b+1}_j\cap E^{b+1}_{b+1}\not\subset {\mathcal N}_{b+1}$.
 \item[ii)] $E^{b+1}_i\cap E^{b+1}_{b+1}\not\subset {\mathcal N}_{b+1}$ and  $E^{b+1}_j\cap E^{b+1}_{b+1}\subset {\mathcal N}_{b+1}$.
 \end{enumerate}
 Assume we have i). We apply induction hypothesis at $p'_j$ to see that $E^{b+1}_\ell$ is invariant, hence $E^b_\ell$ is also invariant, and moreover one of the following properties holds
 \begin{enumerate}
 \item[a)] $E^{b+1}_i\cap E^{b+1}_{\ell}\subset {\mathcal N}_{b+1}$ and  $E^{b+1}_i\cap E^{b+1}_{b+1}\not\subset {\mathcal N}_{b+1}$.
 \item[b)] $E^{b+1}_i\cap E^{b+1}_{\ell}\not\subset {\mathcal N}_{b+1}$ and  $E^{b+1}_i\cap E^{b+1}_{b+1}\subset {\mathcal N}_{b+1}$.
 \end{enumerate}
 
 In  case a) we have $E^{b}_i\cap E^{b}_{\ell}\subset {\mathcal N}_{b}$. It remains to prove that $E^{b}_j\cap E^{b}_{\ell}\not\subset {\mathcal N}_{b}$. But if $E^{b}_j\cap E^{b}_{\ell}\subset {\mathcal N}_{b}$, we apply induction hypothesis at $p'_i$ and then either
 $E^{b+1}_i\cap E^{b+1}_{b+1}\subset {\mathcal N}_{b+1}$ or $E^{b+1}_\ell\cap E^{b+1}_{b+1}\subset {\mathcal N}_{b+1}$; in the first case we find a contradiction at the point $p'_\ell$ and in the second one we find a contradiction at $p'_j$. 
 
 In case b)  we have $E^{b}_i\cap E^{b}_{\ell}\not\subset {\mathcal N}_{b}$. Moreover, we can apply induction hypothesis at $p'_i$ to deduce that either $E^{b+1}_j\cap E^{b+1}_\ell\subset {\mathcal N}_{b+1}$ or
 $E^{b+1}_j\cap E^{b+1}_{b+1}\subset {\mathcal N}_{b+1}$, but in the second case we find a contradiction at $p'_i$ and hence we have that $E^{b}_j\cap E^{b}_\ell\subset {\mathcal N}_{b}$. 
 
 Finally, if we have ii) we do the same arguments as in i) by interchanging the indices $i,j$.

 Now, we consider the case (2) where the center of $\pi_{b+1}$ is the curve $Y_b=E^b_i\cup E^b_\ell$. Note that $Y_b$ is non compact and hence $Y_b\not\subset {\mathcal N}_{b}$.  Consider the points
 \begin{eqnarray*}
 \{q'_\ell\}&=&E^{b+1}_i\cap E^{b+1}_j\cap E^{b+1}_{b+1};\\
 \{q'_i\}&=&E^{b+1}_\ell\cap E^{b+1}_j\cap E^{b+1}_{b+1}.
 \end{eqnarray*}
 Applying induction at $q'_\ell$ we see that $E^{b+1}_i$, $E^{b+1}_j$, and $E^{b+1}_{b+1}$ are invariant components of $E^{b+1}$. Hence $E^{b}_i$, $E^{b}_j$ are invariant. Also by induction at $q'_\ell$  and since $E^{b+1}_{b+1}\cap E^{b+1}_i$ is non compact, we have that
 $E^{b+1}_{b+1}\cap E^{b+1}_j\subset {\mathcal N}_{b+1}$. Now, we apply induction at $q'_i$ to see that  $E^{b+1}_\ell$ is invariant and that $E^{b+1}_{j}\cap E^{b+1}_\ell\subset {\mathcal N}_{b+1}$ since
 $E^{b+1}_{\ell}\cap E^{b+1}_{b+1}$ is non compact. Hence $E^{b}_\ell$ is invariant and  $E^{b}_{j}\cap E^{b}_\ell\subset {\mathcal N}_{b}$.
\end{proof}
\begin{remark} In view of the terminology introduced in Remark \ref{rk:divisortrace} and considering the fact that a compact curve of the adapted singular locus is not contained in a dicritical component by Remark \ref{rk:gradodicritico}, we can reformulate Lemma \ref{lema:cornoddic} as follows:

 {``Let $\Gamma$ be a curve that is an irreducible component of ${\mathcal N}_k$ with $e(E^k,\Gamma)=2$. Then $\Gamma$ is generically a simple CH-corner and if $p$ is a point of intersection of $\Gamma$ with a  component $E^k_\ell$ of $E^k$ transversal to $\Gamma$, we have that $E^k_\ell$ is invariant and there is exactly one curve $\Gamma'\subset {\mathcal N}_k$ such that $\Gamma'\ne\Gamma$, $p\in \Gamma\cap \Gamma'$ and $\Gamma'$ is generically simple CH-corner.''
}
\end{remark}
\begin{definition}
\label{def:nodalinterruption}
Let $\Gamma$ be an s-trace curve for ${\mathcal F}_k, E^k$ with $e(E^k,\Gamma)\geq 1$. We say that $\Gamma$ is {\em interrupted by an irreducible component $E^k_\ell$ of $E^k$}  at a point $p$ if
$p\in \Gamma\cap E^k_\ell$ and  $\Gamma\not\subset E^k_\ell$. The interruption is of {\em nodal type} if $E^k_i\cap E^k_\ell\subset {\mathcal N}_k$ (locally at $p$), for any $E^k_i$ such that $\Gamma\subset E^k_i$.
\end{definition}

\begin{remark}
In the case
$e(E^k;\Gamma)=2$ with $\Gamma\subset E^k_i\cap E^k_j$ the curve $\Gamma$ is not compact since it is supposed to be an s-trace curve, see Remark  \ref{rk:divisortrace}. In particular $\Gamma\not\subset {\mathcal N}_k$. In this case, the following statements are equivalent
\begin{enumerate}
\item[a)] The interruption of $\Gamma$ by $E^k_\ell$ at $p$ is a nodal interruption.
\item[b)] $E^k_i\cap E^k_\ell\subset {\mathcal N}_k$.
\item[c)] $E^k_j\cap E^k_\ell\subset {\mathcal N}_k$.
\end{enumerate}
This is because by Lemma \ref{lema:cornoddic} we have that $E^k_i\cap E^k_\ell\subset {\mathcal N}_k$ implies that  $E^k_j\cap E^k_\ell\subset {\mathcal N}_k$ and conversely.  Also by Lemma \ref{lema:cornoddic}, in the case of a nodal interruption, all the concerned components $E^k_i,E^k_j,E^k_\ell$ are invariant ones.
\end{remark}

\begin{proposition} [Non dicriticalness and nodality at trace points]
\label{prop:ndicandnodality}
Consider a point $p\in {\mathcal N}_k$ with
$k\geq b({\mathcal N})$. Then all the components of $E^k$ through $p$ are non dicritical.  Moreover, let $\Gamma$ be an s-trace curve for ${\mathcal F}_k, E^k$ interrupted by  $E^k_\ell$ at $p$ such that $e(E^k,\Gamma)\geq 1$. There is an s-trace curve $\Gamma'\subset E^k_\ell$  with $p\in \Gamma'$ such that
\begin{enumerate}
\item If the interruption is of nodal type we have
$
\Gamma\subset {\mathcal N}_k\Leftrightarrow \Gamma'\not\subset {\mathcal N}_k
$.
\item If the interruption is not of nodal type we have
$
\Gamma\subset {\mathcal N}_k\Leftrightarrow \Gamma'\subset {\mathcal N}_k
$.
\end{enumerate}
\end{proposition}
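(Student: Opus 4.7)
The plan is to mimic the architecture of Lemma~\ref{lema:cornoddic}, inducting on the height $h(p)$ of $p\in {\mathcal N}_k$ with respect to the fixed RI-reduction $\pi$, and threading the non-dicriticalness assertion through the same induction so that both conclusions are produced simultaneously.

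For the base case $h(p)=0$, I would observe that $p$ is already a simple CH-point for ${\mathcal F}_k,E^k$ that will not be modified by any subsequent blow-up, so the local picture at $p\in M_k$ agrees, via $\rho_k$, with the picture at its unique lift in $\mathcal{N}\subset M_N$. Since $\mathcal{N}$ is by hypothesis disjoint from $E_{\mbox{\small dic}}$, every component of $E^k$ through $p$ is automatically invariant, which gives the non-dicriticalness claim at height zero. The hypothesis $e(E^k,\Gamma)\ge 1$ then forces $e(E^k,\Gamma)=1$, since an s-trace curve at a simple point cannot be a corner. Writing ${\mathcal F}_k$ in the normalized nodal form of Remark~\ref{rk:linealizacionnodal}, the available s-trace curves through $p$ are precisely the axes $E^k_i\cap\{x_\tau=0\}$ with $x_\tau=0$ the leaf direction, and the alternative (1)--(2) reduces to the combinatorics of signs among the coefficients of the residual vector: a nodal interruption places $\Gamma$ and the second s-trace axis $\Gamma'\subset E^k_\ell$ on opposite sides of the signature partition of $\{r_i\}$, while a non-nodal interruption places them on the same side. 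A direct inspection of the four sign sub-cases completes the base.

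For the inductive step, let $b\ge k$ be the first index with $p_b\in Y_b$, where $p_b$ denotes the lift of $p$ to $M_b$. I would split the argument into the four cases for the center $Y_b$ of $\pi_{b+1}$: a single point with non-dicritical blow-up; a single point with dicritical blow-up; a curve transversal to $E^b$ at $p_b$; and an s-trace curve containing $p_b$. In each case I would lift $p$ to one or several points $p'\in{\mathcal N}_{b+1}$, use Propositions~\ref{pro:chcornerstability} and~\ref{pro:notchcorners} to locate an s-trace point in the newly created divisor, and then invoke Propositions~\ref{prop: singlocus} and~\ref{prop: singlocus2} to continue the strict transform of $\Gamma$ to an s-trace curve in the appropriate lifted component $E^{b+1}_\ell$ (or $E^{b+1}_{b+1}$). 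The induction hypothesis, available at the strictly smaller height $h(p_b)<h(p)$, then delivers the nodal/non-nodal alternative upstairs, and Lemma~\ref{lema:cornoddic} together with the projection under $\pi_{b+1}$ transports the alternative down to step $k$. Non-dicriticalness descends along the same induction: since the invariant/dicritical character of an exceptional component is preserved by strict transform, any hypothetical dicritical component through $p$ would lift to a dicritical component through some $p'\in{\mathcal N}_{b+1}$, contradicting the hypothesis at smaller height.

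The hardest case will be the one in which the center $Y_b$ is itself an s-trace curve through $p_b$ that only partially lies in ${\mathcal N}_b$. The blow-up then produces the compact component $E^{b+1}_{b+1}$ together with several new triple corners on $E^{b+1}_\ell\cap E^{b+1}_{b+1}$, and I will need to track carefully which of these new corner curves fall into ${\mathcal N}_{b+1}$ and whether the strict transform of $\Gamma$ still meets that of $E^k_\ell$ transversally, applying Lemma~\ref{lema:cornoddic} at each newly created triple point to balance the signs. A secondary delicate step is that the continuation $\Gamma'\subset E^k_\ell$ produced by Proposition~\ref{prop: singlocus} is not a~priori unique near $p$; I plan to show a~posteriori that the alternative in (1)--(2) is independent of this choice, by combining the corner alternative of Lemma~\ref{lema:cornoddic} with the fact that, at a simple CH-point, at most one s-trace curve lies in any given invariant divisor through a fixed point, so that all admissible continuations $\Gamma'$ share the same nodal character.
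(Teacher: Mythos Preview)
Your overall architecture---induction on the height $h(p)$, with the non-dicriticalness and the nodal/non-nodal alternative carried simultaneously---is exactly the paper's strategy. However, several pieces of your plan are imprecise or incorrect in ways that would block a complete proof.

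First, your base case is over-engineered and partly wrong. When $h(p)=0$ the point $p$ is a simple CH-point lying in the compact nodal component $\mathcal N$, which by hypothesis avoids $E_{\mbox{\small dic}}$. A short check (using Remark~\ref{rk:nodalofdimthree} and the compactness of $\mathcal N$) shows that the interruption hypothesis of part (1)--(2) is \emph{vacuous} at such $p$: either $p$ is a corner and carries no s-trace curve, or $p$ is a trace point but then there is no transversal divisorial component $E^k_\ell$ available to interrupt anything. Your detailed ``signature partition of $\{r_i\}$'' discussion never actually occurs at height zero.

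Second, your inductive case split is incomplete. Listing ``point non-dicritical / point dicritical / curve transversal / s-trace curve'' misses the crucial distinctions the paper is forced to make: whether $Y_k=\Gamma$ (case B-0), whether $Y_k\subset E^k_i$ versus $Y_k\subset E^k_\ell$ versus $Y_k=E^k_i\cap E^k_\ell$ with $Y_k$ generically pre-simple CH-corner (the sub-cases of B-2), and, for the non-dicriticalness assertion, the finer split A-2(1)--(3) on $e(E^k,Y_k)$ and $e(E^k,p)$. Each of these needs a separate bookkeeping of which curves in $E^{b}_{b}$ are compact, which land in $\mathcal N_b$, and how Lemma~\ref{lema:cornoddic} transfers nodality across the new triple points; your plan does not yet see these branches. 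In particular, ``curve transversal to $E^b$ at $p_b$'' is not a case that arises: every admissible center has normal crossings with $E^b$ and lies in at least one of its components.

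Third, your non-dicriticalness descent is too quick. Saying ``a hypothetical dicritical component through $p$ lifts to one through some $p'\in\mathcal N_{b+1}$'' presupposes that $\mathcal N_{b+1}$ actually meets the strict transform of that component. The paper spends all of part~A establishing exactly this, by first producing a curve $\Delta\subset\mathcal N_k$ through $p$, tracking $\Delta'$ into $E^b_b$, and then in each sub-case manufacturing a curve of $\mathcal N_b$ inside $E^b_b$ that is forced to hit the candidate component. You will need that argument.

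Two smaller points: the height drops at the lifted points $p'\in\pi_{b+1}^{-1}(p_b)$, not at $p_b$ itself (your inequality $h(p_b)<h(p)$ is false; $h(p_b)=h(p)$); and the worry about the non-uniqueness of $\Gamma'$ is unnecessary, since the statement is purely existential.
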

\begin{proof} We proceed by induction on the height $h(p)$. If $h(p)=0$ we are done in view of the description o the singular locus at simple points and the hypothesis that $\mathcal N$ has only compact irreducible components and does not intersect the dicritical components of the divisor.

Assume that $h(p)\geq 1$. In order to simplify the notation we can assume that $p\in Y_k$, otherwise we consider the first index where this holds as in the proof of Lemma \ref{lema:cornoddic}. Also in order to simplify the writing, let us denote $b=k+1$.
Let us note that since $k\geq b({\mathcal N})$, there is at least one curve $\Delta\subset{\mathcal N}_k$ such that $p\in \Delta$. Note that $\Delta\subset E^k$ and $\Delta\ne Y_k$, because $\Delta$ is a compact curve. We will denote by $\Delta$ such curves if no confusion arises.

  First we prove that $\pi_b$ is a non-dicritical blow-up.
  We apply the induction hypothesis at a point $p'\in E^b_b\cap \Delta'$ where $\Delta'$ is  the strict transform  of $\Delta$ by $\pi_b$, this implies that $E^b_b$ is invariant, that is $\pi_b$ is a non dicritical blow-up.

A) Let us now prove that all the components of $E^k$ through $p$ are non dicritical.
 We have to consider the cases that $Y_k=\{p\}$ and $Y_k$ is a germ of curve at $p$.

{\em Case A-1: $Y_k=\{p\}$}. We recall that the divisor $E^b_b$ is invariant and isomorphic to a projective plane ${\mathbb P}_{\mathbb C}^2$.   We have two possibilities
\begin{enumerate}
\item [a)] There is an s-trace curve $\Delta$.
\item [b)] All the curves $\Delta$ are generically simple CH-corners.
\end{enumerate}
If we are in case a), take a point $p'\in \Delta'\cap E^b_b$, where $\Delta'$ is the strict transform of $\Delta$ by $\pi_b$.
We can apply induction hypothesis at $p'$, since $\Delta'$ is an s-trace curve and $\Delta'$ is interrupted by $E^b_b$ at $p'$.  The interruption may be of nodal type or not, in both cases we find a curve $\Delta''\subset E^b_b\cap {\mathcal N}_b$ with $p'\in \Delta''$. Now, given any component $E^k_\ell$ with $p\in E^k_\ell$, there is a point $p''$ belonging to the intersection of the projective line $E^b_\ell\cap E^b_b$ and $\Delta''$. By induction hypothesis at $p''$ we conclude that $E^k_\ell$ is invariant. If we are in case b), we take a generically simple CH-corner curve $\Delta$ with $p\in \Delta$ and we apply Lemma \ref{lema:cornoddic} at the point $p'\in \Delta'\cap E^b_b$ to find a curve $\Delta''\subset E^b_b\cap {\mathcal N}_b$ as before. We conclude as in case a) that any component of $E^k$ through $p$ is invariant.

{\em Case A-2: $Y_k$ is a germ of curve at $p$}.
Note that $e(E^k,p)\geq 1$ since $\Delta\subset E^k$ (this is also valid for the previous case). Suppose that $\Delta\subset E^k_i$, taking a point $p'\in E^b_b\cap \Delta'$ we have that $p'\in{\mathcal N}_b\cap E^b_i$. By induction $E^b_i$ and hence $E^k_i$ are invariant components. Thus, it is enough to look at the components $E^k_\ell$ such that $p\in E^k_\ell$ and $\Delta\not\subset E^k_\ell$ for any  $\Delta$, in particular, we can suppose that $e(E^k,p)\geq 2$.

 We have several possibilities
\begin{enumerate}
\item $e(E^k,Y_k)=1$, $e(E^k,p)=2$. Put $Y_k\subset E^k_i$ and $p\in E^k_i\cap E^k_j$.  If $\Delta\subset E^k_i$, the point $p'\in \Delta'\cap E^b_b$ belongs to $E^b_i\cap E^b_j$ and  both $E^k_i$ and $E^k_j$ are invariant components. Assume now that $\Delta\subset E^k_j$ but $\Delta\not\subset E^k_i$, in particular $\Delta$ is an s-trace curve. We consider a point $p'\in \Delta'\cap E^b_b$ and we are going to apply induction at $p'$. We have two possible situations
    \begin{enumerate}
    \item $p'\in E^b_i$. Then $E^k_i$ and $E^k_j$ are invariant components as above.
    \item $p'\notin E^b_i$. Since $p'\in \Delta'\subset E^b_j$, we have that $E^k_j$ is an invariant component. Moreover, the s-trace curve $\Delta'$ is interrupted at $p'$ by $E^b_b$ and we can apply induction. The only compact curve through $p'$ contained in $E^b_b$ is $E^b_b\cap E^b_j$. If the interruption is not a nodal one, we should have a compact curve $\Gamma'\subset {\mathcal N}_b\cap E^b_b$ different from $E^b_b\cap E^b_j$. This is not possible, then we have a nodal interruption and thus $\Delta''\subset {\mathcal N}_b$, where $\Delta''=E^b_b\cap E^b_j$. Consider the point $p''\in \Delta''\cap E^b_i$. By induction hypothesis at $p''$ we deduce that $E^b_i$ and hence $E^k_i$ are invariant components.
    \end{enumerate}
\item $e(E^k,Y_k)=e(E^k,p)=2$.  Put $Y_k\subset E^k_i\cap E^k_j$. Note that $\Delta\ne E^k_i\cap E^k_j$ and thus, up to a reordering of the indices we have a curve $\Delta$ with $\Delta\subset E^k_i$ and $\Delta\not\subset E^k_j$. We deduce as above that $E^k_i$ is invariant. Let $p'$ be the point given by $p'\in \Delta'\cap E^b_b$, that is $\{p'\}= \pi_b^{-1}(p)\cap E^b_i$. The only compact curve contained in $E^b_b$ through $p'$ is $\Delta''=\pi_b^{-1}(p)$, by applying induction at $p'$ we conclude that $\Delta''\subset {\mathcal N}_b$. Moreover  $\Delta''$ is interrupted in a non nodal way by $E^b_j$ at $p''\in E^b_j\cap \Delta''$. In particular $p''\in {\mathcal N}_b$ and by induction $E^k_j$ is an invariant component. (By the way, we find an s-trace curve $\Delta'_1\subset E^b_j\cap {\mathcal N}_b$ and thus there is also a compact curve $\Delta_1\subset {\mathcal N}_k$ with $p\in \Delta_1\subset E^k_j$).
\item $e(E^k,Y_k)=2$, $e(E^k,p)=3$.  Put $Y_k\subset E^k_i\cap E^k_j$ and $p\in E^k_i\cap E^k_j\cap E^k_\ell$.  Denote $\Delta''=\pi^{-1}(p)$. We are going to show that $\Delta''\subset {\mathcal N}_b$, then we conclude by induction applied at  $\Delta''\cap E^b_i$ and $\Delta''\cap E^b_j$ that $E^k_i$ and $E^k_j$ are invariant components, moreover since
     $\Delta''\subset E^b_\ell$ we also conclude that $E^k_\ell$ is an invariant component. Now, if $\Delta$ is an s-trace curve, we conclude as in the previous cases that $\Delta''\subset{\mathcal N}_b$.  Finally, if $\Delta\subset E^k_i\cap E^k_\ell$, we apply Proposition \ref{pro:notchcorners} to conclude that $\Delta''\subset {\mathcal N}_b$.
\end{enumerate}

B)  Now, let $\Gamma$ be an s-trace curve interrupted by $E^k_\ell$ at $p$ such that $e(E^k,\Gamma)\geq1$.  If $e(E^k,\Gamma)=1$ we put $\Gamma\subset E^k_i$ and if $e(E^k,\Gamma)=2$ we put $\Gamma\subset E^k_i\cap E^k_j$ and hence $\Gamma=E^k_j\cap E^k_\ell$, locally at $p$. We denote by $\tilde\Gamma$ the strict transform of $\Gamma$ (in the cases that $\Gamma\ne Y_k$) and by $\tilde p$ a point in $\tilde\Gamma\cap E^b_b$. Let us also consider a curve $\Delta\subset{\mathcal N}_k$ with $p\in \Delta$, denote by $\Delta'$ the strict transform of $\Delta$ and take a point $q'\in \Delta'$.

{\em B-0) Case $Y_k=\Gamma$}. Note that in this case we have that $\Gamma\not\subset{\mathcal N}_k$. If all the points in $\pi_b^{-1}(p)\cap \mbox{Sing}({\mathcal F}_b,E^b)$ are pre-simple CH-corners, we deduce that $p$ is also a pre-simple CH-corner by Proposition \ref{pro:notchcorners}, but this is not possible since $\Gamma=Y_k$ is an s-trace curve. Otherwise, there is at least one s-trace point  $r'\in \pi_b^{-1}(p)$. We apply Proposition \ref{prop: singlocus} at $r'$ to find a trace curve $\tilde \Gamma\subset E^b_b$ and $\tilde\Gamma'\subset E^b_\ell$ with $r'\in \tilde \Gamma$. Now, $\tilde\Gamma$ is not compact and hence $\tilde\Gamma\not\subset{\mathcal N}_b$. Moreover, if the interruption of $\Gamma=Y_k$ at $p$ is nodal, we find by induction that $\pi_b^{-1}(p)\subset {\mathcal N}_b$ and thus the interruption of $\tilde\Gamma$ at $r'$ by $E^b_\ell$ is also a nodal interruption. By induction, we find a trace curve $\tilde\Gamma'\subset E^b_\ell$ with $r'\in \tilde\Gamma'$ such that $\tilde\Gamma'\subset {\mathcal N}_b$. By projection of $\tilde\Gamma'$ we find $\Gamma'\subset {\mathcal N}_k$. If the interruption of $\Gamma$ at $p$ is not nodal, we also find that $\pi_b^{-1}(p)\not\subset {\mathcal N}_b$ and thus the interruption of $\tilde\Gamma$ at $r'$ by $E^b_\ell$ is also a not nodal interruption. By induction we find $\Gamma'\not\subset{\mathcal N}_k$
as above.

{\em B-1) Case $Y_k=\{p\}$}. Assume first that $e(E^k,p)=2$ with $p\in E^k_i\cap E^k_\ell$. If $\tilde p\in E^b_i\cap E^b_\ell\cap E^b_b$ we can apply simultaneously induction at $\tilde p$ and Proposition \ref{pro:notchcorners}  to see that there is an s-trace curve $\tilde\Gamma'$ with $\tilde p\in \tilde\Gamma'\subset {E^b_\ell}$ such that
\begin{eqnarray*}
\mbox{If } E^b_i\cap E^b_\ell\subset {\mathcal N} _b \mbox{ then } (\tilde \Gamma\subset {N}_b \Leftrightarrow \tilde \Gamma'\not\subset {N}_b)\\
\mbox{If } E^b_i\cap E^b_\ell\not\subset {\mathcal N} _b \mbox{ then } (\tilde \Gamma\subset {N}_b \Leftrightarrow \tilde \Gamma'\subset {N}_b)
\end{eqnarray*}
  If $\tilde p\in E^b_i\cap E^b_b\setminus E^b_\ell$, by induction applied at $\tilde p$ we find an s-trace curve $\tilde\Gamma_1$ with $\tilde p\in \tilde \Gamma_1\subset E^b_b$ that cuts $E^b_\ell\cap E^b_b$ in a point $\tilde p_1$ and hence it is interrupted at $\tilde p_1$ by $E^b_\ell$, this implies the existence of an s-trace curve $\tilde\Gamma'$ with $\tilde p_1\in \tilde\Gamma'\subset E^b_\ell$ such that
\begin{enumerate}
\item
\mbox{If } $E^b_i\cap E^b_\ell\subset {\mathcal N} _b$ \mbox{ then } $(E^b_i\cap E^b_b\subset {N}_b \Leftrightarrow E^b_\ell\cap E^b_b\not\subset {N}_b)$  and, doing an argument through $\tilde \Gamma_1$ we have $(\tilde \Gamma\subset {N}_b \Leftrightarrow \tilde \Gamma'\not\subset {N}_b)$.
\item \mbox{If } $E^b_i\cap E^b_\ell\not\subset {\mathcal N} _b$ \mbox{ then } $(E^b_i\cap E^b_b\subset {N}_b \Leftrightarrow E^b_\ell\cap E^b_b\subset {N}_b)$  and, doing an argument through $\tilde \Gamma_1$ we have $(\tilde \Gamma\subset {N}_b \Leftrightarrow \tilde \Gamma'\subset {N}_b)$.
\end{enumerate}
We end by projecting $\tilde \Gamma'$ by $\pi_b$ to obtain $\Gamma'$ and noting that the interruption is a nodal one if and only if  $E^k_i\cap E^k_\ell\subset {\mathcal N}_k$. Finally, the case that $e(E^k,p)=3$ is done with the same arguments as in the situation with $e(E^k,p)=2$.

{\em B-2) Case that $Y_k$ is a curve with $Y_k\ne \Gamma$}. Since $e(E^k,p)\geq 2$ and $Y_k$ has normal crossings with $E^k$ we have that  $Y_k\subset E^k_i$ or $Y_k\subset E^k_\ell$. If $Y_k\subset E^k_i$ but $Y_k\not\subset E^k_\ell$ we have that $\pi_b^{-1}(p)\subset E^b_\ell$ and we can apply induction at $\tilde p$ to see that there is an s-trace curve $\tilde\Gamma'$ with $\tilde p\in \tilde\Gamma'\subset E^b_\ell$ such that
\begin{eqnarray*}
\mbox{If } E^b_i\cap E^b_\ell\subset {\mathcal N} _b \mbox{ then } (\tilde \Gamma\subset {N}_b \Leftrightarrow \tilde \Gamma'\not\subset {N}_b)\\
\mbox{If } E^b_i\cap E^b_\ell\not\subset {\mathcal N} _b \mbox{ then } (\tilde \Gamma\subset {N}_b \Leftrightarrow \tilde \Gamma'\subset {N}_b).
\end{eqnarray*}
We end as in the previous cases of B-1). Assume that $Y_k\subset E^k_\ell$ but $Y_k\not\subset E^k_i$. Now we have that $\pi_b^{-1}(p)=E^b_i\cap E^b_b$. By induction at $\tilde p$ there is an s-trace curve $\tilde\Gamma'$ with $\tilde p\in \tilde\Gamma'\subset E^b_\ell$ such that
\begin{eqnarray*}
\mbox{If } \pi_b^{-1}(p)\subset {\mathcal N}_b \mbox{ then } (\tilde \Gamma\subset {N}_b \Leftrightarrow \tilde \Gamma'\not\subset {N}_b)\\
\mbox{If } \pi_b^{-1}(p)\not\subset {\mathcal N}_b \mbox{ then } (\tilde \Gamma\subset {N}_b \Leftrightarrow \tilde \Gamma'\subset {N}_b).
\end{eqnarray*}
Let $q$ be the point of intersection of $E^b_\ell$ and $\pi_b^{-1}(p)$. Note that $E^b_b\cap E_\ell$ is not compact and thus
$E^b_b\cap E_\ell\not\subset {\mathcal N}_b$. Thus, by Proposition \ref{pro:notchcorners}, we have that
$$
 \pi_b^{-1}(p)\subset {\mathcal N}_b  \Leftrightarrow E^b_i\cap E^b_\ell\subset {N}_b.
$$
We conclude that
\begin{eqnarray*}
\mbox{If } E^b_i\cap E^b_\ell\subset {\mathcal N}_b \mbox{ then } (\tilde \Gamma\subset {N}_b \Leftrightarrow \tilde \Gamma'\not\subset {N}_b)\\
\mbox{If } E^b_i\cap E^b_\ell\not\subset {\mathcal N}_b \mbox{ then } (\tilde \Gamma\subset {N}_b \Leftrightarrow \tilde \Gamma'\subset {N}_b).
\end{eqnarray*}
We end this case by noting that the interruption is nodal if and only if $E^k_i\cap E^k_\ell\not\subset {\mathcal N}_k$.

It remains to consider the case that $Y_k=E^k_i\cap E^k_\ell$ locally at $p$. The interruption then is a non nodal one, since $Y_k\not\subset{\mathcal N}_k$. If $Y_k$ is an s-trace curve, we are done by taking $\Gamma'=Y_k$. Assume that  $Y_k$ is generically pre-simple CH-corner. By induction at $\tilde p$, there is an s-trace curve $\tilde\Gamma_1\subset E^b_b$ such that
$$
\tilde\Gamma_1\subset{\mathcal N}_b\Leftrightarrow \tilde\Gamma\subset {\mathcal N}_b.
$$
We necessarily have that $\tilde\Gamma_1=\pi_b^{-1}(p)$ since there are not other possible singular curves over a generically pre-simple CH-corner like $Y_k$ after a nondicritical blow-up. We apply induction at the point of intersection of $\pi^{-1}(p)$ with $E^b_\ell$ to obtain $\tilde\Gamma'\subset E^b_\ell$ with
$$
\tilde\Gamma\subset{\mathcal N}_b\Leftrightarrow \tilde\Gamma_1\subset {\mathcal N}_b\Leftrightarrow \tilde\Gamma'\subset {\mathcal N}_b
$$
since the interruption of $\pi^{-1}(p)=\tilde\Gamma_1$ by $E^b_\ell$ is not nodal because $E^b_\ell\cap E^b_b$ is not compact and thus $E^b_\ell\cap E^b_b\not\subset {\mathcal N}_b$. This ends the proof.
\end{proof}
\begin{remark}
\label{rk:libre}
It is not necessary to work at a point $p\in {\mathcal N}_k$ to obtain conclusion (2) of Proposition \ref{prop:ndicandnodality}. To be precise, the following statement is also true as a direct consequence of Proposition \ref{prop: singlocus}:

{\em
``Let $\Gamma$ be an s-trace curve for ${\mathcal F}_k, E^k$ and suppose that there is an invariant component $E^k_\ell$ with $\Gamma\not\subset E^k_\ell$ and $p$ is a point $p\notin {\mathcal N}_k$ with $p\in \Gamma\cap E^k_\ell$.  There is an s-trace curve $\Gamma'\subset E^k_\ell$  with $p\in \Gamma'$ such that $\Gamma'\not\subset {\mathcal N}_k$''.
}
\end{remark}
\begin{proposition}[Incompatibility of trace curves]
 \label{pro:incompatibilidad}
Consider two s-trace curves $\Gamma_1$ and $\Gamma_2$ having a common point  $p\in \Gamma_1\cap\Gamma_2$ and contained in a common component $E^k_i$ of $E^k$. Then $\Gamma_1\subset{\mathcal N}_k$ if and only if $\Gamma_2\subset{\mathcal N}_k$. In an equivalent way, it is not possible that $\Gamma_1\subset{\mathcal N}_k$ and $\Gamma_2\not\subset{\mathcal N}_k$.
\end{proposition}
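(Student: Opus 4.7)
The proof proceeds by induction on the height $h(p)$, following the pattern of the proofs of Lemma \ref{lema:cornoddic} and Proposition \ref{prop:ndicandnodality}.

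For the base case $h(p)=0$ the hypothesis is in fact vacuous. At a simple CH-point $p$ of $\mathcal{F}_k,E^k$ the singular curves through $p$ admit the complete description of Remark \ref{rek:singularlocus}. At a simple CH-corner of dimensional type $\tau$ every such curve $\Gamma_{ij}=(x_i=x_j=0)$ satisfies $e(E^k_{\mbox{\small inv}},\Gamma_{ij})=2$ and is therefore not an s-trace curve; at a simple CH-trace point of dimensional type $\tau=2$ there is a single singular curve through $p$; and at a simple CH-trace point of dimensional type $\tau=3$, with $E^k_{\mbox{\small inv}}=(x_1x_2=0)$ locally, the only two s-trace curves through $p$ are $(x_1=x_3=0)$ and $(x_2=x_3=0)$, which lie in \emph{distinct} invariant components of $E^k$. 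Hence no pair $(\Gamma_1,\Gamma_2)$ as in the statement can exist at height zero.

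For the inductive step $h(p)\geq 1$, we argue by contradiction: suppose $\Gamma_1\subset\mathcal{N}_k$ and $\Gamma_2\not\subset\mathcal{N}_k$. After replacing $k$ by the first index $b\geq k$ with $Y_b$ containing the lift of $p$, we may assume $p\in Y_k$. Since $p\in\Gamma_1\subset\mathcal{N}_k$, Proposition \ref{prop:ndicandnodality} ensures that $\pi_{k+1}$ is non-dicritical and that every component of $E^k$ through $p$ is invariant. When $Y_k=\{p\}$, the strict transforms $\tilde\Gamma_1,\tilde\Gamma_2\subset\tilde E^{k+1}_i$ meet the projective line $L=\tilde E^{k+1}_i\cap E^{k+1}_{k+1}$ at distinct points $p_1,p_2$, with $\tilde\Gamma_1\subset\mathcal{N}_{k+1}$ and $\tilde\Gamma_2\not\subset\mathcal{N}_{k+1}$. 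Proposition \ref{prop:ndicandnodality} at $p_1$, and either Proposition \ref{prop:ndicandnodality} or Remark \ref{rk:libre} at $p_2$ (depending on whether $p_2\in\mathcal{N}_{k+1}$), furnish s-trace curves $\Gamma'_1,\Gamma'_2\subset E^{k+1}_{k+1}$ through $p_1,p_2$ whose $\mathcal{N}_{k+1}$-status is determined by the nodal or non-nodal character of each interruption, as recorded by Lemma \ref{lema:cornoddic}. Iterated application of Proposition \ref{prop: singlocus} inside $E^{k+1}_{k+1}\cong\mathbb{P}^2$ produces a chain of s-trace curves joining $\Gamma'_1$ to an s-trace curve through $p_2$; the induction hypothesis applied at an intermediate point of strictly smaller height delivers the contradiction. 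The case in which $Y_k$ is a germ of curve is handled analogously; if moreover $Y_k\in\{\Gamma_1,\Gamma_2\}$, Proposition \ref{prop: singlocus2} supplies a compact s-trace curve in $E^{k+1}_{k+1}$ projecting onto $Y_k$ that plays the role of the missing strict transform.

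The principal obstacle is the combinatorial bookkeeping of nodal versus non-nodal interruptions along the chain running through $E^{k+1}_{k+1}$: the alternation rule of Lemma \ref{lema:cornoddic} and the biconditionals of Proposition \ref{prop:ndicandnodality} must interlock so that the $\mathcal{N}_{k+1}$-membership status is transported consistently from a neighborhood of $p_1$ to a neighborhood of $p_2$, without being destroyed by an unintended parity of nodal interruptions. A careful parity count of these interruptions is what ultimately forces the contradiction, and the sub-case $Y_k\in\{\Gamma_1,\Gamma_2\}$ requires an additional verification, exploiting the non-dicriticalness of $\pi_{k+1}$ granted by Proposition \ref{prop:ndicandnodality}, that the new compact curve produced by Proposition \ref{prop: singlocus2} inherits the correct nodal status from the center.
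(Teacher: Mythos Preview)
Your overall framework is the paper's: induction on $h(p)$, contradiction in the inductive step, and the same base case analysis. But in the punctual case $Y_k=\{p\}$ you miss the one observation that makes the argument immediate, and you replace it by a vague ``parity count along a chain'' that is never actually carried out.

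The point you overlook is that the nodal character of the interruption is the \emph{same} for $\tilde\Gamma_1$ and $\tilde\Gamma_2$: both are s-trace curves in $E^{b}_i$ interrupted by $E^{b}_{b}$, and by Definition~\ref{def:nodalinterruption} the interruption is nodal if and only if the single irreducible curve $E^{b}_i\cap E^{b}_{b}$ lies in $\mathcal N_{b}$; this is a global property of that projective line, independent of whether one looks near $p_1$ or near $p_2$. Consequently, applying Proposition~\ref{prop:ndicandnodality} (or Remark~\ref{rk:libre}) once at each point produces s-trace curves $\Gamma'_1,\Gamma'_2\subset E^{b}_{b}$ whose $\mathcal N_{b}$-status is \emph{still opposite}: one lies in $\mathcal N_{b}$ and the other does not. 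Since $E^{b}_{b}\cong\mathbb P^2_{\mathbb C}$, these two curves meet, and the induction hypothesis at any intersection point gives the contradiction directly. There is no chain to build and no parity to track.

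Your treatment of the curve center is also incomplete. The sentence ``handled analogously'' hides three genuinely distinct sub-cases. When $Y_k\subset E^k_i$ and $Y_k\ne\Gamma_1,\Gamma_2$, both strict transforms land in $E^{b}_i$ and share the single point $\pi_b^{-1}(p)\cap E^{b}_i$, so induction applies at once with no use of Proposition~\ref{prop:ndicandnodality}. When $Y_k=\Gamma_2$, one shows $E^b_i\cap E^b_b\not\subset\mathcal N_b$, so $\tilde\Gamma_1$ is interrupted non-nodally and Proposition~\ref{prop:ndicandnodality} forces the unique compact curve $\pi_b^{-1}(p)$ into $\mathcal N_b$; then Proposition~\ref{prop: singlocus2} supplies a non-compact s-trace curve in $E^b_b$ meeting it, and induction finishes. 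When $Y_k\not\subset E^k_i$, the compact fibre $\pi_b^{-1}(p)$ is the only candidate curve in $E^b_b$, and whichever of $\pi_b^{-1}(p)\subset\mathcal N_b$ or $\pi_b^{-1}(p)\not\subset\mathcal N_b$ holds, one of $\tilde\Gamma_1,\tilde\Gamma_2$ is interrupted nodally and Proposition~\ref{prop:ndicandnodality} would demand a second compact s-trace curve in $E^b_b$ of the wrong status, which does not exist. None of these is the same argument, and your proposal does not distinguish them.
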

\begin{proof} Induction on the height $h(p)$. If $h(p)=0$ we are done, since there is at most one s-trace curve through $p$ contained in a component of the divisor. Assume that $h(p)\geq 1$ and suppose that $\Gamma_1\subset{\mathcal N}_k$ and $\Gamma_2\not\subset{\mathcal N}_k$ in order to find a contradiction. Take notations and conventions as in the proof of Proposition \ref{prop:ndicandnodality}. If $Y_k=\{p\}$, we have that $\tilde \Gamma_1$ and $\tilde \Gamma_2$ are interrupted by $E^b_b$ and the interruption given by $E^b_i\cap E^b_b$ is simultaneously nodal or not nodal for $\tilde \Gamma_1$ and $\tilde \Gamma_2$. By Proposition \ref{prop:ndicandnodality} and Remark \ref{rk:libre} we obtain two trace curves $\tilde \Gamma'_1$ and $\tilde \Gamma'_2$ contained in $E^b_b$ that must have a common point $p'\in \tilde\Gamma'_1\cap\tilde\Gamma'_2$, where we find a contradiction by induction hypothesis.

In the case $Y_k$ is a germ of curve $p\in Y_k$ and $Y_k\subset E^k_i$, with $Y_k\ne \Gamma_2$, (note that we know that $Y_k\ne\Gamma_1$) we find directly a contradiction by looking at $\tilde \Gamma_1$ and $\tilde \Gamma_2$ that are contained in $E^b_i$ and have the common point $p'=\pi^{-1}(p)\cap E^b_i$.

Assume that $Y_k=\Gamma_2$. We know that $E^b_i\cap E^b_b\not\subset{\mathcal N}_b$. Take the point $q$ such that $q\in \pi_b^{-1}(p)\cap E^b_i$. The strict transform $\tilde\Gamma_2$ is interrupted in a not nodal way by $E^b_b$ at $q$ and applying Proposition \ref{prop:ndicandnodality} we find a compact s-trace curve $\Delta\subset E^b_b\cap {\mathcal N}_b$. The only possibility is that $\Delta=\pi_b^{-1}(p)$. Moreover, by Proposition \ref{prop: singlocus2} there is an s-trace curve $\gamma\subset E^b_b$ that must intersect $\Delta$ in at least one point $r$. We find a contradiction at the point $r$ by applying induction hypothesis.

Assume that $Y_k$ is a germ of curve $p\in Y_k$ and $Y_k\not\subset E^k_i$. If $\pi^{-1}(p)\subset {\mathcal N}_b$ we find a contradiction since the nodal interruption of ${\tilde\Gamma_2}$ produces a trace curve of ${\mathcal N}_b$ contained in $E^b_b$ different from the only compact curve $\pi^{-1}(0)$. If $\pi^{-1}(p)\not\subset {\mathcal N}_b$ we find a contradiction since the nodal interruption of ${\tilde\Gamma_1}$ produces a trace curve of ${\mathcal N}_b$ contained in $E^b_b$ different from the only compact curve $\pi^{-1}(0)$. This ends the proof.
\end{proof}
\section{Conclusion}
In this section we end the proof of Theorem \ref{teo:mainIII}. As in Section \ref{sec:nodalcomponents}, we consider
 a RICH-foliation $\mathcal F$ in $({\mathbb C}^3,0)$  jointly with an RI-reduction of singularities
$
\pi:(M,\pi^{-1}(0))\rightarrow ({\mathbb C}^3,0)
$.
We will find a contradiction with the existence of a nodal component $\mathcal N$ which is compact and does not intersect the union  of the dicri\-tical components of the exceptional divisor $E$.

Let us go to the date of birth $b=b({\mathcal N})$. Put $k=b-1$. Note that $b\geq 1$. Consider first the situation that $b=1$. We consider the possibilities that $Y_0=\{0\}$ or $Y_0$ is a germ of curve.

 Assume that $Y_0=\{0\}$. We have an s-trace compact curve $\Gamma\subset E^1_1$ such that $\Gamma\subset {\mathcal N}_1$. In particular $E^1_1$ is invariant by Proposition \ref{prop:ndicandnodality}.
 Now, take a plane $(\Sigma,0)\subset ({\mathbb C}^3,0)$ that induces a transversal section of $\mathcal F$ in the sense of Mattei-Moussu \cite{Mat-M}. We know that $\Sigma$ may be chosen generic enough to assure that the strict transform $\tilde\Sigma$ of $\Sigma$ by $\pi_1$ cuts transversely $\mbox{Sing}({\mathcal F}_1,E^1)$ only at simple points (the set of non simple points in $E^1_1$ is finite). Thus, the restriction $$\sigma: (\tilde\Sigma, \tilde\Sigma\cap E^1_1)\rightarrow (\Sigma,0) $$
 of $\pi_1$
 provides a reduction of singularities of ${\mathcal G}={\mathcal F}\vert_{\Sigma}$. Let $p\in \Gamma\cap \tilde\Sigma$. The Camacho-Sad index (see \cite{Cam-S}) of $\mathcal G$ at $p$ is a positive real number, since $p$ is a simple point of nodal type for $\mathcal F$ and hence for $\mathcal G$. Since the sum of indices is $-1$, there is another point $q\in \tilde\Sigma\cap E^1_1$ which is a simple not nodal point for $\mathcal G$ and hence for $\mathcal F$. This implies the existence of an s-trace curve $\Gamma_1\subset E^1_1$ that is not generically nodal and hence $\Gamma_1\not\subset {\mathcal N}_1$. Since $E^1_1$ is isomorphic to a projective plane, there is a common point $r\in \Gamma\cap \Gamma_1$ in contradiction with Proposition \ref{pro:incompatibilidad}.

 Assume  now that $Y_0$ is a germ of curve. Then $\Gamma=\pi_1^{-1}(0)$ is the only possible compact curve in ${\mathcal N}_1$ and we also have that $E^1_1$ is invariant.
 Looking at two dimensional transversal sections of the center $Y_0$ at generic points and recalling that a non-dicritical blow-up in dimension two produces at least one singular point (given for instance by Camacho-Sad separatrix, see \cite{Cam-S, Can-C-D}), we find at least one singular curve $\Gamma_1\subset E^1_1$ which projects onto $Y_0$. Then $\Gamma_1$ and $\Gamma$ also give a contradiction with Proposition \ref{pro:incompatibilidad}.

 Assume that $b>1$. Let us suppose first that the center of $\pi_k$ is a point $Y_k=\{p\}$. We have a compact curve $\Delta\subset E^b_b$  such that $\Delta\subset {\mathcal N}_b$. By Proposition \ref{prop:ndicandnodality} and Lemma \ref{lema:cornoddic} we conclude that $E^b_b$ is invariant (that is the blow-up $\pi_k$ is non-dicritical) and also all the components $E^k_i$ through $p$ are invariant, since $\Delta$ cuts each $E^b_i$ such that $p\in E^k_i$. Let us note that $e(E^k,p)\geq 1$, since $b>1$ in particular there is at least one $E^k_i$ with $p\in E^k_i$. Let us prove the following statement
 \begin{quote}
 ``For all $E^k_i$ with $p\in E^k_i$ we have that $\Delta'=E^b_i\cap E^b_b\subset {\mathcal N}_b$.''
 \end{quote}
In the case that $\Delta=E^b_i\cap E^b_b$  we are done. Otherwise $\Delta$ cuts in at least one point $q$ the intersection $E^b_k\cap E^b_b$. If $\Delta$ is not an s-trace curve, we have $\Delta=E^b_j\cap E^b_b$ and we can apply Lemma \ref{lema:cornoddic} to see that either $E^b_i\cap E^b_j$ or $E^b_i\cap E^b_b$ is a curve in ${\mathcal N}_b$. But we know that $E^b_i\cap E^b_j\not\subset{\mathcal N}_b$, since otherwise $E^k_i\cap E^k_j\subset{\mathcal N}_k$ and $k$ is strictly smaller that the date of birth of ${\mathcal N}$. If $\Delta$ is  an s-trace curve, we
apply Proposition \ref{prop:ndicandnodality} at $q$, if the interruption of $\Delta$ by $E^b_i$ is a nodal one, we are done, since this means that $E^b_i\cap E^b_b\subset {\mathcal N}_b$, if it is a not nodal interruption, we find an s-trace curve $\tilde\Delta''\subset {\mathcal N}_b\cap E^b_i$ that projects onto an s-trace curve $\Delta''\subset {\mathcal N}_k\cap E^k_i$ and this also contradicts the fact that $k$ is strictly smaller that the date of birth of ${\mathcal N}$.

Now, we take a transversal two dimensional section $\Sigma$ at $p$ as in the case $b=1$. We find a point $q\in \tilde\Sigma\cap E^b_b$ that is a simple not nodal point for $\mathcal G$; moreover, the point $q$ is outside the intersections $\tilde\Sigma\cap E^b_b\cap E^b_i$ for each $E^b_i$, since these points are nodal ones. In this way we discover an s-trace curve $\Gamma_1\subset E^b_b$ which is not generically nodal and hence $\Gamma_1\not\subset {\mathcal N}_b$. Take a point $r\in \Gamma_1\cap E^b_i$. We apply Proposition \ref{prop:ndicandnodality} at $r$, since the interruption of $\Gamma_1$ by $E^b_i$ at $r$ is a nodal one, there is an s-trace curve $\tilde \Gamma_2\subset {\mathcal N}_b\cap E^b_i$ that must project onto an s-trace curve $\Gamma_2\subset{\mathcal N}_k\cap E^k_i$. We obtain in this way a contradiction as above.

In order to end the proof, let us suppose that $b>1$ and $Y_k$ is a germ of curve with $\{p\}=Y_k\cap E^k$. The only new compact curve after blow-up is $\Delta=\pi_b^{-1}(p)$ and hence we have that $\Delta\subset {\mathcal N}_b$. By a direct computation as in the case $b=1$, we find a singular curve $\gamma\subset E^b_b$ that projects onto $Y_k$. Note that $\gamma\not\subset {\mathcal N}_b$, since it is not a compact curve. Let $q$ be a point $q\in \gamma\cap \Delta$. Looking at the point $q$, we obtain a contradiction as follows
\begin{enumerate}
\item If $\Delta$, $\gamma$ are both  s-trace curves, we apply the incompatibility result of Proposition \ref{pro:incompatibilidad}.
\item If $\Delta$ is an  s-trace curve, but $\gamma$ is not, then $\gamma=E^b_b\cap E^b_i$ and we find by Proposition  \ref{prop:ndicandnodality} an s-trace curve $\tilde\Gamma\subset {\mathcal N}_b\cap E^b_i$ that projects onto an s-trace curve $\Gamma\subset {\mathcal N}_k\cap E^k_i$, contradiction.
\item  If $\Delta$ is not an s-trace curve, but $\gamma$ is, then $\Delta=E^b_b\cap E^b_i$ and we find a contradiction as in the preceding case.
\item  If $\Delta$ and $\gamma$ are both generically pre-simple corner curves then $\Delta=E^b_b\cap E^b_i$ and
 $\gamma=E^b_b\cap E^b_j$ and by Lemma \ref{lema:cornoddic}, we deduce that $E^b_i\cap E^b_j\subset {\mathcal N}_b$ and hence $E^k_i\cap E^k_j\subset {\mathcal N}_k$, contradiction.
\end{enumerate}
This finishes the proof of Theorem \ref{teo:mainIII}.

Now, Theorem \ref{teo:mainII} is a consequence of  Theorem \ref{teo:mainI}  and Theorem \ref{teo:mainIII} as follows. Property (ii) of Theorem \ref{teo:mainII} occurs when we have a nodal component $\mathcal N$ which is neither compact nor cuts a non compact dicritical component. So, if there are nodal components, we can assume that all of them intersect at least one compact dicritical component of the exceptional divisor. In this situation we can cover the nodal components with leaves containing germs of analytic curves at the origin and the arguments of Theorem \ref{teo:mainI} apply.

\end{document}